\newtheorem{theorem}{Theorem}[section]
\newtheorem{lemma}{Lemma}[section]
\newtheorem{remark}{Remark}
\newtheorem{corollary}{Corollary}[section] 
\newtheorem{example}{Example}[section] 
\newtheorem{proposition}{Proposition}[section]
\newtheorem{problem}{Problem}[section]
\newtheorem{question}{Question}[section]
\def\boo{b_{11}}
\def\bot{b_{12}}
\def\btt{b_{22}}
\def\bto{b_{21}}
\def\bc{b_{23}}
\newcommand{\Z}{\mathbb{Z}}
\title[Representations of virtual and universal braid groups]{Virtual and universal braid groups,  their quotients and representations}
\author[]{V. Bardakov, I.~Emel'yanenkov, M.~Ivanov, T.~Kozlovskaya, T.~Nasybullov, A.~Vesnin}
\date{\today}
\begin{document}
\maketitle 

\begin{abstract}
In the present paper we study structural aspects of certain quotients of braid groups and virtual braid groups. In particular, we construct and study linear  representations $B_n\to {\rm GL}_{n(n-1)/2}\left(\mathbb{Z}[t^{\pm1}]\right)$, $VB_n\to {\rm GL}_{n(n-1)/2}\left(\mathbb{Z}[t^{\pm1}, t_1^{\pm1},t_2^{\pm1},\ldots, t_{n-1}^{\pm1}]\right)$
which are connected with the famous Lawrence-Bigelow-Krammer representation. It turns out that these representations are faithful representations of crystallographic groups $B_n/P_n'$, $VB_n/VP_n'$, respectively. Using these representations we study certain properties of the groups $B_n/P_n'$, $VB_n/VP_n'$. Moreover, we construct new representations and decompositions of universal braid groups $UB_n$.

 ~\\
 \textit{Keywords:} virtual braid, universal braid, crystallographic  group,  representations by automorphisms, linear group.
 
 ~\\
 \textit{Mathematics Subject Classification 2010:} 20F36, 20F29, 57M27, 57M25.
\end{abstract}

\tableofcontents

\section*{Introduction}
Braid groups were introduced by E. Artin as a tool for working with classical knots and links. The
famous Alexander theorem says that every classical link is equivalent to the closure of some braid,
and the well-known Markov theorem describes braids which have equivalent closures.

In 1999 L. Kauffman introduced the virtual knot theory which generalizes the classical knot theory \cite{Ka}. Virtual braid groups were introduced in the same paper as a tool for working with virtual knots and links.
The analogues of the Alexander and the Markov theorems for virtual braids and links were formulated and proved by S. Kamada in \cite{Kam1}. Another form of the Markov theorem for virtual braids
and links was introduced by Kauffman and Lambropoulou in \cite{Kalo}.

Welded braid groups, unrestricted virtual braid groups, flat virtual braid groups, flat welded braid groups, Gauss virtual braid groups are quotients of virtual braid groups. These groups have various applications in algebra and topology, in particular, each of these groups corresponds to some knot theory, which has a geometric interpretation that is often very useful when studying applications. Welded and unrestricted virtual braid groups are used for working with welded links \cite{FRR} and fused links \cite{ABMW,Nas}, respectively. Flat virtual braid groups were introduced in \cite{Ka2} for working with flat virtual knots and links, which were introduced in \cite{Ka}, and which model curves on surfaces. The notion of flat virtual links is the same to the notion of virtual strings introduced by V. Turaev in \cite{Tur}. In the papers \cite{IvaVes,KPV} invariants for flat virtual knots and links were used for constructing invariants for virtual knots and links. Gauss virtual braid groups are used for studying Gauss virtual links, which are called ``homotopy classes of
Gauss words'' by Turaev in \cite{Tur2}, and called ``free knots'' by Manturov in \cite{Mant}. All together classical braid groups, virtual braid groups, welded braid groups, unrestricted virtual braid groups, flat virtual braid groups, flat welded braid groups and Gauss virtual braid groups are called braid-like groups. Universal braid groups were introduced in \cite{Bar}. The name ``universal'' in this paper is motivated by the fact that universal braid groups contain classical braid groups, and moreover these groups have singular braid groups and all braid-like groups as quotients. Hence the structure of these groups play an important role in the study of all braid-like groups.

One way of studying braid-like groups is to study representations of these groups by automorphisms of different algebraic systems. Using the representations one can prove that various algorithmic problems (the word problem, the conjugacy problem etc) can be solved in these groups. Also using representations of  braid-like groups one can construct invariants for the corresponding knot theories \cite{BarNas1,BarNas2,BarNas3}. 

One of the first representations of braid-like groups is the Artin's representation $\varphi:B_n\to{\rm Aut}(F_n)$ of the braid group $B_n$ by automorphisms of the free group $F_n$ with $n$ generators. The Artin's representation is faithful. During the years a lot of representations of braid-like
groups were investigated. For example, there are the Silver-Williams representation $\varphi_{SW}:VB_n\to {\rm Aut}(F_{n}*\mathbb{Z}^{n+1})$ (see \cite{SilWil}), the Boden-Dies-Gaudreau-Gerlings-Harper-Nicas representation $\varphi_{BD}:VB_n\to {\rm Aut}(F_{n}*\mathbb{Z}^2)$ (see \cite{BDGGHN}),  the Kamada representation $\varphi_K:VB_n\to {\rm Aut}(F_{n}*\mathbb{Z}^{2n})$ (see \cite{BN}), the representations $\varphi_M:VB_n\to {\rm Aut}(F_n*\mathbb{Z}^{2n+1})$, $\tilde{\varphi}_M:VB_n\to {\rm Aut}(F_n*\mathbb{Z}^n)$ of Bardakov-Mikhalchishina-Neshchadim (see \cite{BarMikNes, BarMikNes2}). In the paper~\cite{alotofauthors} there are representations of the flat virtual braid groups by automorphisms of the free group, which do not preserve forbidden relations. 

The study of linear representations of braid groups was started when Burau  constructed his famous representation $\varphi_B:B_n\to{\rm GL}_n(\mathbb{Z}[t,t^{-1}])$ (see, for example, \cite[Section~3]{Bir}). For $n=2,3$ the Burau representation is known to be faithful, for $n\geq 5$ the Burau representation is known to have non-trivial kernel \cite{Big}, for $n=4$ the question on faithfulness of the Burau representation is still open (see some positive results, for example, in \cite{trac}). The Burau representation was generalized in 1961 by Gassner \cite{Gas} to a related representation of the pure braid group $\varphi_G:P_n\to {\rm GL}_n(\mathbb{Z}[t_1^{\pm1},t_2^{\pm1},\dots,t_n^{\pm1}])$. It is not known whether the Gassner representation is faithful for $n>3$. Some new results about representations of virtual braid groups can be found, for example, in \cite{BarNas1, BarNas3}. Certain linear representations of flat virtual braid groups and Gauss virtual braid groups are studied in \cite{alotofauthors}.

Another way to study braid-like groups is to explore normal forms of elements in braid-like groups which usually (but not always) arise when a braid-like group can be written as a product (for example, direct product, semiderict product, free product etc). There are several normal forms of elements in classical braid groups which can be found, for example, in  \cite{Kassel}. Attempts to construct a normal form for elements of the virtual braid groups were made in the paper \cite{Bar}. Paper \cite{BarBelDom} contains results about the normal form of elements in unrestricted virtual braid groups and flat welded braid groups. 

In the present paper we study structural aspects of certain quotients of braid groups and virtual braid groups using both representations and decompositions. In particular, we construct and study linear  representations 
\begin{align*}
B_n\to {\rm GL}_{n(n-1)/2}\left(\mathbb{Z}[t^{\pm1}]\right),&& VB_n\to {\rm GL}_{n(n-1)/2}\left(\mathbb{Z}[t^{\pm1}, t_1^{\pm1},t_2^{\pm1},\ldots, t_{n-1}^{\pm1}]\right)
\end{align*}
which are connected with the famous Lawrence-Bigelow-Krammer representation. It turns out that these representations are faithful representations of crystallographic groups $B_n/P_n'$, $VB_n/VP_n'$, respectively. Using these representations we study certain properties of the groups $B_n/P_n'$, $VB_n/VP_n'$. Moreover, we construct new representations and decompositioms of universal braid groups.

The paper is organized as follows. In Section~\ref{sec1} we recall definitions and basic properties of braid groups, virtual braid groups, welded braid groups and pure subgroups of these groups. In Section~\ref{sec2} we construct a faithful linear representation of the crystallographic group $B_n / P_n'$ that is the quotient of the braid group $B_n$ by the commutator subgroup $P_n' = [P_n, P_n]$ of the pure braid group $P_n$ (Theorem~\ref{theorem2.1}). The torsion in this group is studied in \cite{GGO}. Using the constructed representation we describe all elements of order three in $B_3/ P_3'$ (Proposition~\ref{proposition2.1}). Note that this group has only three-torsion. 
In Section~\ref{sec3} we construct a faithful linear representation of another crystallographic group $VB_n / VP_n'$ which contains $B_n / P_n'$ (Theorem~\ref{theorem3.1}). In Proposition~\ref{proposition3.2} we demonstrate that $VB_3 / VP_3'$ contains infinitely many elements of order two. Note that $B_n / P_n'$  has no two-torsion. In Section~\ref{sec4} we show that $VP_3$ can be decomposed into the free product of infinite cyclic group and group that is a semi-direct product of free group of rank three and free group of rank two (Proposition~\ref{proposition4.1}). In Theorem~\ref{theorem4.1} we prove that 
$VB_3$ is linear. In Section~\ref{sec5} we discuss the universal braid group $ UB_{n}$ introduced in~\cite{Bar}, and its pure subgroup $UP_{n}$. In particular we construct several representations of  $UP_{n}$ by automorphisms of free groups which act faithfully on a subgroup $T_n$ of $UP_n$. Also in Theorem~\ref{theorem5.1} we demonstrate that the groups $UP_{2}$ and $UP_{3}$ admit decompositions into free products.

\section{Preliminaries} \label{sec1}
In this section we give necessary preliminaries. We use classical notations. If $G$ is a group, and $a,b\in G$, then we denote by $a^b=b^{-1}ab$ the conjugate of $a$ by $b$, and by $[a,b]=a^{-1}b^{-1}ab$ the commutator of the elements $a,b$. If $X$ is a set, then we denote by ${\rm Sym}(X)$ the set of all bijections from $X$ to~$X$. All actions are supposed to be right, i.~e. if $f,g\in {\rm Sym}(X)$, then for $x\in X$ we denote $fg(x)=g(f(x))$.

\subsection{Braid groups and conjugating automorphism groups}\label{braidsandautomorphisms}
For $n\geq 2$ the braid group $B_n$ on $n$ strands is defined as
the group generated by $(n-1)$ elements
$\sigma_1,\sigma_2,\ldots,\sigma_{n-1}$
with defining relations
\begin{align}
 \label{eq1}\sigma_i\sigma_{i+1}\sigma_i& = \sigma_{i+1}\sigma_i\sigma_{i+1},&&i=1,2,\ldots,n-2,\\
\label{eq2}\sigma_i\sigma_j &= \sigma_j\sigma_i,&& |i-j|\geq 2. 
\end{align}
Relations (\ref{eq1}) and (\ref{eq2}) are called \emph{braid relations}.  There is a homomorphism $\varphi$ from $B_n$ to the symmetric group $S_n$ on $n$ symbols $\{ 1, 2, \ldots, n\}$, which maps the generator $\sigma_i$ to the transposition  $(i,i+1)$ for $i=1,2,\ldots,n-1$. The kernel $\operatorname{Ker}(\varphi)$ is called the \emph{pure braid group} on $n$ strands and is denoted by $P_n$. From the definition it is clear that we have the following short exact sequence:  
$$
1 \to P_{n} \to B_{n} \stackrel{\varphi}{\to} S_{n} \to  1.
$$ 
It is known that the pure braid group $P_n$ can be generated by the elements $a_{i,j}$ for $1\leq i < j\leq n$, where
\begin{align*}
a_{i,i+1}&=\sigma_i^2, &&i=1,2,\dots,n-1,\\
a_{i,j}&= \sigma_{j-1} \, \sigma_{j-2} \ldots \sigma_{i+1} \, \sigma_i^2 \, \sigma_{i+1}^{-1} \ldots \sigma_{j-2}^{-1} \, \sigma_{j-1}^{-1},&&i+1< j \leq n.
\end{align*}
Also it is know that the pure braid group $P_{n}$ can be decomposed into a semi-direct product $P_n = U_{n} \rtimes P_{n-1}$, where $U_n$ is the free group with free generators $a_{1,n}, a_{2,n},\ldots,a_{n-1,n}$, and $P_{n-1}$ is the pure braid group on $(n-1)$ strands. Here $U_{n}$ is a normal subgroup of $P_{n}$. Similar, $P_{n-1}$ is a semi-direct product of the free group $U_{n-1}$ with the free generators $a_{1,n-1}, a_{2,n-1} \ldots,a_{n-2,n-1}$ and $P_{n-2}$, and so on. Therefore $P_n$ has the following decomposition into the iterated semi-direct product (see~\cite{Mar}):
$$
P_n=U_n\rtimes (U_{n-1}\rtimes (\ldots \rtimes (U_3\rtimes U_2))\ldots), 
$$
where  $U_i\simeq F_{i-1}$ is the free group on $(i-1)$ generators for $i=2,3,\ldots,n$. 

The group $B_n$ has a faithful representation by automorphisms of the free group $F_n = \langle x_1, x_2, \ldots, x_n \rangle$, where the generator $\sigma_i$, $i=1,2,\ldots,n-1$, defines the automorphism (which we denote by the same symbol  $\sigma_{i} \in \operatorname{Aut} (F_{n})$) which acts on the generators $x_1,x_2,\dots,x_n$ of $F_n$ by the following rule
$$
\sigma_{i} : \begin{cases}
x_{i} \mapsto x_{i} x_{i+1}x_i^{-1},&\\
x_{i+1} \mapsto x_{i},&\\
x_{l} \mapsto x_{l}, &  l \neq i,i+1.
\end{cases}
$$
By the Artin's theorem \cite[Theorem 1.9]{Bir} an automorphism $\beta$ from $\operatorname{Aut}(F_n)$ belongs to $B_n < \operatorname{Aut} (F_{n})$ if and only if $\beta$ satisfies the following two conditions:
\begin{enumerate}
\item[1)]  $\beta(x_i) = a_i^{-1} \, x_{\pi(i)} \, a_i$  for  $1\leq i\leq n$, 
\item[2)]  $\beta(x_1x_2  \cdots x_n)=x_1x_2 \cdots x_n$,
\end{enumerate}
where $\pi $ is a permutation from $S_n$ and $a_i \in F_n$ for $i=1,2,\dots,n$.

The automorphism $\beta$ of the free group $F_n$ is called a  \emph{conjugating automorphism} (or permutation-conjugating by terminology from  \cite{FRR}) if it satisfies condition 1) from the list above. It is clear that every automorphism from $B_n<{\rm Aut}(F_n)$ is a conjugating automorphism. The group of all conjugating automorphisms of $F_n$ is denoted by $C_n$. It is known that this group is generated by the automorphisms $\sigma_i$ for $i=1,2,\ldots,n-1$ and automorphisms $\alpha_i$ for $i=1, 2, \ldots,$ $n-1$, where $\alpha_i$ acts on the generators $x_1,x_2,\dots,x_n$ of $F_n$ by the following rule
$$
\alpha_{i} : \begin{cases}
x_{i} \mapsto x_{i+1},&\\
x_{i+1} \mapsto x_{i},  &\\
x_{l} \mapsto x_{l}, &  l \neq i, i+1.
\end{cases} 
$$
Generators $\sigma_{1},\sigma_2,\ldots,\sigma_{n-1}$ of $C_{n}$ satisfy the braid relations (\ref{eq1}) and (\ref{eq2}) of $B_{n}$. Also it is easy to  see that the automorphisms $\alpha_1,\alpha_2,\dots,\alpha_{n-1}$ of $C_n$ generate the symmetric group $S_n$ in $C_n$ which has the following defining relations  
\begin{align}
\label{eq3}\alpha_i \alpha_{i+1} \alpha_i &= \alpha_{i+1} \alpha_i \alpha_{i+1},&& i=1,2,\ldots,n-2, \\
\label{eq4}\alpha_i \alpha_j &= \alpha_j \alpha_i,&& |i-j|\geq 2,\\
\label{eq5}\alpha_i^2 &= 1,&&  i=1,2,\ldots,n-1. 
\end{align}
Moreover, the generators $\sigma_{1},\sigma_2,\ldots,\sigma_{n-1},\alpha_{1},\alpha_2,\dots,\alpha_{n-1}$ of $C_{n}$ satisfy the following mixed relations (see \cite{FRR, Sav}): 
\begin{align}
\label{eq6}\alpha_i \sigma_j &= \sigma_j \alpha_i,&&  |i-j|\geq 2, \\
\label{eq7}\sigma_i  \alpha_{i+1} \alpha_i &= \alpha_{i+1}  \alpha_i  \sigma_{i+1},&&  i=1,2,\ldots,n-2, \\
 \label{eq8}\sigma_{i+1} \sigma_{i} \alpha_{i+1} &= \alpha_{i} \sigma_{i+1} \sigma_{i}, && i=1,2,\ldots,n-2.
\end{align}
Summarizing, the group $C_{n}$ has  generators $\sigma_{1},\sigma_2,\ldots,\sigma_{n-1},\alpha_{1},\alpha_2,\dots,\alpha_{n-1}$ with the defining relations (\ref{eq1})-(\ref{eq8}).

For $1\leq i\neq j \leq n$ denote by  $\varepsilon_{i,j}$ the automorphism of $F_n$ which acts on the generators of $F_n$ by the rule
$$
\varepsilon_{i,j}: \begin{cases}
x_{i} \mapsto x_{j}^{-1} x_i x_j, &  i\neq j, \\
x_{l} \mapsto x_{l}, &  l\neq i.
\end{cases}
$$
The subgroup of ${\rm Aut}(F_n)$ generated by  $\varepsilon_{i,j}$ for $1\leq i\neq j \leq n$ is called the \emph{group of basis-conjugating automorphisms} and is denoted by $Cb_n$. It is clear that every generator $\varepsilon_{i,j}$ belongs to $C_n$, therefore the group $Cb_n$ of basis-conjugating automorphisms is a subgroup of the group $C_n$ of conjugating automorphisms. In~\cite{Mac} it is proved that $Cb_n$ has the following defining relations: 
\begin{align}
\label{eq9}\varepsilon_{i,j} \varepsilon_{k,l} &= \varepsilon_{k,l}  \varepsilon_{i,j},\\
\label{eq10}\varepsilon_{i,j} \varepsilon_{k,j} &= \varepsilon_{k,j} \varepsilon_{i,j},\\
\label{eq11}(\varepsilon_{i,j} \varepsilon_{k,j}) \varepsilon_{i,k} &= \varepsilon_{i,k}  (\varepsilon_{i,j} \varepsilon_{k,j}),
\end{align} 
where different letters denote different indices. 

It is easy to see that the group $C_n$ of conjugating automorphisms can be decomposed into a semi-direct product $C_n = Cb_n \rtimes S_n$, where the permutation group $S_n$ is generated by the automorphisms $\alpha_1, \alpha_2, \ldots, \alpha_{n-1}$. In \cite{Sav} it is prove that the following relations hold: 
\begin{align*}
\varepsilon_{i,i+1} &= \alpha_i \sigma^{-1}_i,&&\\  
\varepsilon_{i+1,i} &= \sigma^{-1}_i \alpha_i,&&\\
\varepsilon_{i,j} &= \alpha_{j-1} \alpha_{j-2} \ldots \alpha_{i+1}  \varepsilon_{i, i+1} \alpha_{i+1} \ldots \alpha_{j-2} \alpha_{j-1},&&\text{if}~i <j,\\
\varepsilon_{j,i} &= \alpha_{j-1} \alpha_{j-2} \ldots \alpha_{i+1}  \alpha_i \varepsilon_{i, i+1} \alpha_i \alpha_{i+1} \ldots \alpha_{j-2}  \alpha_{j-1},&&\text{if}~i <j.
\end{align*}
The structure of $Cb_n$ was studied in \cite{Bar, BarP}, where it was shown that  $Cb_n$, $n\geq 2$, has a decomposition into an iterated semi-direct product 
$$
Cb_n = D_{n-1}\rtimes (D_{n-2}\rtimes (\ldots.  \rtimes (D_2\rtimes D_1))\ldots ),
$$
where $D_i$ denotes a subgroup of $Cb_n$ generated by the elements
$$
\{ \varepsilon_{i+1,1}, \varepsilon_{i+1,2}, \ldots,\varepsilon_{i+1,i}, \varepsilon_{1,i+1}, \varepsilon_{2,i+1}, \ldots \varepsilon_{i,i+1} \},
$$ 
where $i=1,2,\ldots,n-1,$
The elements $\{ \varepsilon_{i+1,1}, \varepsilon_{i+1,2}, \ldots,\varepsilon_{i+1,i} \}$  generate the free group of rank $i$ in $D_i$, while the elements $\{ \varepsilon_{1,i+1}, \varepsilon_{2,i+1}, \ldots, \varepsilon_{i,i+1} \}$ generate the free abelian group of rank $i$ in $D_i$.

The  pure braid group $P_n$ is a subgroup of $Cb_n$. The generators $a_{i,j}$ (for $1\leq i < j\leq n$) of $P_n$ can be written in $Cb_n$ in the following form
\begin{align*}
a_{i,i+1}&=\varepsilon_{i,i+1}^{-1}  \varepsilon_{i+1,i}^{-1}
\end{align*}
for  $i=1,2,\ldots,n-1$, and
\begin{align*}
a_{i,j} & = \varepsilon_{j-1,i} \varepsilon_{j-2,i} \ldots \varepsilon_{i+1,i} (\varepsilon_{i,j}^{-1} \varepsilon_{j,i}^{-1})  \varepsilon_{i+1,i}^{-1} \ldots \varepsilon_{j-2,i}^{-1}  \varepsilon_{j-1,i}^{-1} \\ 
& = \varepsilon_{j-1,j}^{-1} \varepsilon_{j-2,j}^{-1} \ldots \varepsilon_{i+1,j}^{-1}  (\varepsilon_{i,j}^{-1}  \varepsilon_{j,i}^{-1})  \varepsilon_{i+1,j} \ldots \varepsilon_{j-2,j}  \varepsilon_{j-1,j},
\end{align*}
for $2 \leq i+1 < j \leq n$.

\subsection{The singular braid monoid}
\textit{The Baez-Birman monoid}
\cite{Baez, Bir93} or \textit{the singular braid monoid} $SB_n$ is defined as the monoind with the generators $\sigma_1,\sigma_2,\dots,\sigma_{n-1}$, $\sigma_1^{-1},\sigma_2^{-1},\dots,\sigma_{n-1}^{-1}$, $\tau_1,\tau_2,\dots,\tau_{n-1}$, and definig relations (\ref{eq1}), (\ref{eq2}) and addinitonal relations
\begin{align}	
\sigma_i \, \sigma_i^{-1} &= \sigma_i^{-1}\sigma_i=1,&&i=1,2,\dots,n-1, \label{eq1211}\\
\tau_i \, \tau_j &= \tau_j \, \tau_i,&&|i - j| \geq 1, \label{eq121}\\
\tau_{i} \, \sigma_{j} &= \sigma_{j} \, \tau_{i},&&|i - j| \geq 1, \label{eq131}\\
\tau_{i}  \, \sigma_{i} &= \sigma_{i} \, \tau_{i},&&i=1,2,\ldots,n-1,  \label{eq141}\\
\sigma_{i} \, \sigma_{i+1} \, \tau_i &= \tau_{i+1} \, \sigma_{i}  \, \sigma_{i+1},&&i=1,2,\ldots,n-2, \label{eq151}\\
\sigma_{i+1}  \, \sigma_{i} \, \tau_{i+1} &= \tau_{i} \,
\sigma_{i+1} \, \sigma_{i},&& i=1,2,\ldots,n-2.
 \label{eq161}
\end{align}

In the work \cite{FKR} it was proved that the singular braid monoid $SB_n$ is embedded into the group $SG_n$, which is called the 
singular braid group and has the same defining relations as $SB_n$.

\subsection{The virtual braid group and the welded braid group}\label{virtbraid}

The \emph{virtual braid group} $VB_n$ on $n$ strands, introduced in \cite{Ka}, is the group with the generators $\sigma_1,\sigma_2,\dots,\sigma_{n-1}$, $\rho_1,\rho_2,\dots,\rho_{n-1}$, which is defined by the relations (\ref{eq1})-(\ref{eq2}) and additional relations
 \begin{align}
\label{eq12} \rho_{i}  \rho_{i+1} \rho_{i} &= \rho_{i+1} \rho_{i} \rho_{i+1},&&  i=1,2,\ldots,n-2,\\
\label{eq13}\rho_{i}  \rho_{j} &= \rho_{j}  \rho_{i},&&  |i-j| \geq 2,   \\
\label{eq14}\rho_{i}^2 &= 1,&&  i=1,2,\ldots,n-1,\\ 
\label{eq15}\sigma_{i} \rho_{j} &= \rho_{j}  \sigma_{i},&& |i-j| \geq 1,\\
\label{eq16}\rho_{i} \rho_{i+1}  \sigma_{i} &= \sigma_{i+1}  \rho_{i}   \rho_{i+1},&&  i=1,2,\ldots,n-2. 
\end{align}
The elements $\sigma_1,\sigma_2,\dots,\sigma_{n-1}$ generate the braid group $B_n$ with the braid relations (\ref{eq1})-(\ref{eq2}) in $VB_n$. Relations (\ref{eq12})-(\ref{eq14}) are defining relations of the permutation group $S_n$, where the elements $\rho_1,\rho_2,\dots,\rho_{n-1}$ generate the permutation group $S_n$ in $VB_n$, hence the virtual braid group $VB_n$ is generated by the braid group $B_n$ and the permutation group $S_n$. Relations (\ref{eq15})-(\ref{eq16}) are called the mixed relations of the virtual braid group. Note that relations (\ref{eq16}) are equivalent to the  relations
$$
\rho_{i+1} \, \rho_{i}  \, \sigma_{i+1} = \sigma_{i} \, \rho_{i+1}  \, \rho_{i}
$$
for $i=1,2,\dots,n-1$. It was shown in  \cite{GPV} that the relations  
\begin{align}
\rho_{i} \, \sigma_{i+1}  \, \sigma_{i} = \sigma_{i+1} \, \sigma_{i} \, \rho_{i+1},&&
\rho_{i+1} \, \sigma_{i}  \, \sigma_{i+1} = \rho_{i} \, \sigma_{i+1} \, \sigma_{i}. \label{eq101}
\end{align}
donot hold in  $VB_n$. Relations (\ref{eq101}) are usually  called \textit{forbidden relations}. 

\textit{The virtual pure braid group} introduced in~\cite{Bar} is the kernel  $VP_n = \operatorname{Ker}(\nu)$  of the homomorphism  $\nu : VB_n \to S_n$  from the virtual braid group $VB_n$ onto the symmetric group $S_n$ given on generators $\sigma_1,\sigma_2,\dots,\sigma_{n-1},\rho_1,\rho_2,\dots,\rho_{n-1}$ of $VB_n$ by the following rule
$$
\nu(\sigma_i) = \nu(\rho_i) = \rho_i, \qquad  \text{for}~i = 1, 2, \ldots, n-1.
$$
From the definition it is clear that the following short exact sequence holds:
$$
1 \to VP_{n} \to VB_{n} \stackrel{\nu}{\to} S_{n} \to  1.
$$ 
Following~\cite{Bar} for $1\leq i\neq j \leq n$ let us definte the elements $\lambda_{i,j}$ by the following formulas 
\begin{align*}
\lambda_{i,i+1} &= \rho_i \, \sigma_i^{-1},\\
  \lambda_{i+1,i} &= \rho_i \, \lambda_{i,i+1} \, \rho_i = \sigma_i^{-1} \, \rho_i
\end{align*}
for  $i=1, 2, \ldots, n-1$, and
\begin{align*}
\lambda_{i,j} & = \rho_{j-1} \, \rho_{j-2} \ldots \rho_{i+1} \, \lambda_{i,i+1} \, \rho_{i+1} \ldots \rho_{j-2} \, \rho_{j-1}, \\ 
\lambda_{j,i} & =  \rho_{j-1} \, \rho_{j-2} \ldots \rho_{i+1} \, \lambda_{i+1,i} \, \rho_{i+1} \ldots \rho_{j-2} \, \rho_{j-1}
\end{align*} 
for $1 \leq i < j-1 \leq n-1$. The following theorem is proved in \cite{Bar}. 
\begin{theorem}   \label{theorem1.1} 
The group $VP_n$ admits a presentation with the generators $\lambda_{k,l}$ for $1 \leq k \neq l \leq n$ and the defining relations
\begin{align}
\label{eq17}\lambda_{i,j} \,  \lambda_{k,\, l} &= \lambda_{k,\, l}  \, \lambda_{i,j},\\ 
\label{eq18}\lambda_{k,i} \,  \lambda_{k,j} \,  \lambda_{i,j} &= \lambda_{i,j} \,  \lambda_{k,j} \,  \lambda_{k,i}, 
\end{align}
where different letters denote different indices.
\end{theorem}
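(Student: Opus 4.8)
The plan is to establish Theorem~\ref{theorem1.1} in three stages: first that the elements $\lambda_{k,l}$ for $1\le k\ne l\le n$ generate $VP_n$, second that they satisfy relations (\ref{eq17}) and (\ref{eq18}), and third --- the substantial part --- that these relations suffice. The structural fact driving everything is that $\rho_1,\dots,\rho_{n-1}$ satisfy in $VB_n$ precisely the relations (\ref{eq12})--(\ref{eq14}) of $S_n$ and that $\nu$ carries $\langle\rho_1,\dots,\rho_{n-1}\rangle$ isomorphically onto $S_n$, so this copy of $S_n$ splits the sequence $1\to VP_n\to VB_n\xrightarrow{\nu}S_n\to1$. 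Conjugation by the $\rho_i$ then realises the $S_n$-action on ordered index pairs, and by their very definition the $\lambda_{i,j}$ arise from $\lambda_{i,i+1}=\rho_i\sigma_i^{-1}$ and $\lambda_{i+1,i}=\sigma_i^{-1}\rho_i$ by conjugating with words in the $\rho_k$; in particular $\rho_k\,\lambda_{i,j}\,\rho_k^{-1}=\lambda_{s_k(i),s_k(j)}$ with $s_k=(k,k+1)$.

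For generation, I would show that the subgroup $\Lambda=\langle\lambda_{i,j}\rangle$ is normal in $VB_n$ and equals $VP_n$. Since $\rho_i$ permutes the $\lambda$'s among themselves and $\sigma_i=\lambda_{i,i+1}^{-1}\rho_i$ lies in $\Lambda\cdot\langle\rho\rangle$, conjugation by either $\rho_i$ or $\sigma_i$ sends $\Lambda$ into itself, so $\Lambda\trianglelefteq VB_n$. In the quotient $VB_n/\Lambda$ the images of $\sigma_i$ and $\rho_i$ coincide and satisfy the $S_n$-relations, so $VB_n/\Lambda$ is a quotient of $S_n$; as $\nu$ factors through it and is onto, finiteness of $S_n$ forces $VB_n/\Lambda\cong S_n$ and hence $\Lambda=\operatorname{Ker}(\nu)=VP_n$. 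That relations (\ref{eq17}) and (\ref{eq18}) hold is then a direct computation inside $VB_n$: using the defining relations (\ref{eq1})--(\ref{eq2}) together with the mixed relations (\ref{eq15})--(\ref{eq16}) one verifies the commuting and triangle identities for the explicit words defining the $\lambda$'s, and it is enough to treat the generating instances and propagate them by conjugating with the $\rho_k$, which respects both relation families.

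Completeness is the heart of the argument, and I would obtain it by producing the presentation abstractly and exhibiting an isomorphism. Let $\widetilde H$ be the group presented by generators $\lambda_{k,l}$ and relations (\ref{eq17})--(\ref{eq18}). Because relabelling indices preserves these relation families, $S_n$ acts on $\widetilde H$ by $\pi\cdot\lambda_{i,j}=\lambda_{\pi(i),\pi(j)}$, and one forms $G=\widetilde H\rtimes S_n$. Define $\psi\colon VB_n\to G$ on generators by $\rho_i\mapsto(1,s_i)$ and $\sigma_i\mapsto(\lambda_{i,i+1}^{-1},s_i)$, and check that $\psi$ respects all of (\ref{eq1})--(\ref{eq16}); composing $\psi$ with the projection $G\to S_n$ recovers $\nu$, so $\psi(VP_n)\subseteq\widetilde H$, and one computes $\psi(\lambda_{i,j})=(\lambda_{i,j},1)$. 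Together with the surjection $\widetilde H\twoheadrightarrow VP_n$ coming from the first two stages (well defined because (\ref{eq17})--(\ref{eq18}) hold in $VP_n$, and surjective by generation), the map $\psi$ provides a two-sided inverse on generators, yielding $VP_n\cong\widetilde H$.

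The main obstacle is precisely the verification that $\psi$ is well defined. After substituting $\sigma_i\mapsto(\lambda_{i,i+1}^{-1},s_i)$, the braid relation (\ref{eq1}) and the mixed relation (\ref{eq16}) unwind in $G$ only once one uses (\ref{eq17})--(\ref{eq18}) in $\widetilde H$ to move the $\lambda$-parts past the $S_n$-parts correctly. Equivalently, one may bypass the semidirect-product bookkeeping and run the Reidemeister--Schreier rewriting directly, taking $\langle\rho_1,\dots,\rho_{n-1}\rangle$ as the Schreier transversal: the generators coming from the $\rho_i$ then collapse to the identity, those coming from the $\sigma_i$ are exactly the $\lambda_{i,j}$, and every rewritten defining relation of $VB_n$ reduces to a consequence of (\ref{eq17})--(\ref{eq18}). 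In either route the difficulty is computational rather than conceptual, and it is in this relation-by-relation check that essentially all of the work lies.
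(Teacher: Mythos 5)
Your overall strategy is correct and is, in essence, the proof that exists in the literature: the present paper does not prove Theorem~\ref{theorem1.1} at all (it quotes it from \cite{Bar}), and the proof given there is exactly the mechanism you describe, namely Reidemeister--Schreier rewriting with respect to the copy of $S_n=\langle\rho_1,\ldots,\rho_{n-1}\rangle$ that splits $\nu$, which is equivalent to your homomorphism $\psi\colon VB_n\to\widetilde H\rtimes S_n$. Your architecture is sound: the normality argument for $\Lambda=\langle\lambda_{i,j}\rangle$ correctly yields generation, and the mutual-inverse argument (the surjection $\widetilde H\twoheadrightarrow VP_n$ followed by $\psi|_{VP_n}$ fixes every generator of $\widetilde H$, hence that surjection is injective) is the right way to conclude $VP_n\cong\widetilde H$.

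Two caveats. First, as you yourself acknowledge, every substantive step is deferred: verifying that the $\lambda_{i,j}$ satisfy (\ref{eq17})--(\ref{eq18}) in $VB_n$, that $\psi$ preserves the defining relations of $VB_n$, and that $\psi(\lambda_{i,j})=(\lambda_{i,j},1)$ is precisely the content of a presentation theorem of this kind; as written, your text is a correct plan rather than a complete proof. Second, a concrete trap: if you run the well-definedness check of $\psi$ against the presentation of $VB_n$ as printed in this paper, it fails, because relation (\ref{eq15}) is stated here as $\sigma_i\rho_j=\rho_j\sigma_i$ for $|i-j|\geq 1$, whereas the correct presentation (and the one used in \cite{Bar}) requires $|i-j|\geq 2$. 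Indeed $\psi(\sigma_i\rho_{i+1})$ and $\psi(\rho_{i+1}\sigma_i)$ have distinct $S_n$-coordinates $s_is_{i+1}\neq s_{i+1}s_i$, so no choice of $\lambda$-parts can reconcile them; the same computation applied to $\nu$ itself, $\nu(\sigma_i\rho_{i+1})=\rho_i\rho_{i+1}\neq\rho_{i+1}\rho_i=\nu(\rho_{i+1}\sigma_i)$, shows the printed ``$\geq 1$'' must be a typo, since otherwise $\nu$ would not even be a homomorphism. So your verification has to be carried out against the corrected relation set; with it, the relation-by-relation check does reduce everything to (\ref{eq17})--(\ref{eq18}), and your argument closes.
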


The \emph{welded braid group} $WB_n$ on $n$ strands introduced in~\cite{FRR}  is the group with the generators $\sigma_1,\sigma_2,\dots,\sigma_{n-1}$, $\alpha_1,\alpha_2,\dots,\alpha_{n-1}$, where the elements $\sigma_1,\sigma_2,\dots,\sigma_{n-1}$ generate the braid group $B_n$ on $n$ strands (i.~e. satisfy relations (\ref{eq1})-(\ref{eq2})), the elements $\alpha_1,\alpha_2,\dots,\alpha_{n-1}$ generate the symmetric group $S_n$ (i.~e. satisfy relations (\ref{eq12})-(\ref{eq14}) where we change the letter $\rho$ by the letter $\alpha$), and moreover, the following mixed relations hold: 
\begin{align}
\label{eq19}\alpha_{i}  \, \sigma_{j} &= \sigma_{j} \, \alpha_{i},&& |i - j| \geq 2,\\
\label{eq20}\sigma_{i} \, \alpha_{i+1} \, \alpha_i &= \alpha_{i+1} \, \alpha_{i} \, \sigma_{i+1},&&  i=1,2,\ldots,n-2,\\
 \label{eq21}\sigma_{i+1} \, \sigma_{i} \, \alpha_{i+1} &= \alpha_{i} \, \sigma_{i+1} \, \sigma_{i}, &&  i=1,2,\ldots,n-2.
\end{align}
It was observed in~\cite{FRR} that $WB_n$ is isomorphic to the group of conjugating automorphisms $C_n$. It was proved in \cite{FRR} that the following relations symmetric to (\ref{eq20}) 
$$
\sigma_{i+1} \, \alpha_{i} \, \alpha_{i+1} = \alpha_{i} \, \alpha_{i+1} \, \sigma_{i},
$$
hold in $WB_n$ for $i=1,2,\dots,n-2$, while the relations 
$$
\alpha_{i+1}  \, \sigma_{i} \, \sigma_{i+1} = \sigma_{i} \, \sigma_{i+1} \, \alpha_{i}
$$
do not hold. 

Comparing defining relations (\ref{eq15})--(\ref{eq16}) of $VB_n$ with defining relations (\ref{eq19})--(\ref{eq21}) of $WB_n$, one can see that the difference between $WB_n$ and  $VB_n$ is given by relations (\ref{eq21}). Therefore, there exists a homomorphism
$$
\varphi_{VW} : VB_n \longrightarrow WB_n,
$$
which maps $\sigma_i$ to $\sigma_i$, and $\rho_i$ to $\alpha_i$ for  $i=1,2\dots,n-1$. Hence, $WB_n$ is the homomorphic image of $VB_n$. 

Recall that a group $G$ is said to be \textit{linear} if there exists a faithful representation of $G$ into the general linear group ${\rm GL}_m(k)$ for some integer $m\geq 2$ and a field $k$. It was shown in \cite{Big, Kra} that the braid group $B_n$ is linear for every $n \geq 2$. 
In \cite{Ver01} there were constructed linear representations of  the virtual braid groups $VB_n$ and the welded braid groups $WB_n$ by matrices from $\mbox{GL}_n(\mathbb{Z}[t, t^{-1}])$. These representations extend  the well-known Burau representation. In \cite{BarP} there was constructed a linear representation of the group $C_n \simeq WB_n$ which is an extension (with additional conditions on parameters) of the well-known  Lawrence-Bigelow-Krammer representation.

\subsection{Braid groups and crystallographic groups} An interesting connection between braid groups and crystallographic groups was observed in~\cite{GGO}.  Recall that a discrete and uniform subgroup $G$ of $\mathbb{R}^n \rtimes O_n (\mathbb{R})\subseteq \operatorname{Aff} (\mathbb{R}^n)$ is said to be a \textit{crystallographic group} of dimension $n$. If in addition $G$ is torsion free, then $G$ is called a \textit{Bieberbach group} of dimension $n$. The correspondence between Bieberbach groups and fundamental groups of closed flat Riemannian manifold is presented in~\cite{Wolf}.

The following characterization of crystallographic groups was used in~\cite[Lemma 8]{GGO} in order to demonstrate the relation between braid groups and crystallographic groups. 

\begin{lemma}\label{lemma1.1} Let $G$ be a group. Then $G$ is a crystallographic group if and only if there exist an integer $n \in  \mathbb{N}$ and a short exact sequence 
$$0 \to \mathbb{Z}^n \to G \to \Phi \to  1$$ 
such that the following conditions hold:
\begin{enumerate}
\item[(a)] $\Phi$ is a finite group,
\item[(b)] the integral representation $\theta : \Phi \to  \operatorname{Aut} (\mathbb{Z}^n)$, induced by conjugation on $\mathbb{Z}^n$ and
defined by $\theta(\varphi)(x) = \pi x \pi^{-1}$, where $x \in \mathbb{Z}^n$, $\varphi \in \Phi$ and $\pi \in G$ is such that $\zeta(\pi) = \varphi$,
is faithful.
\end{enumerate}
\end{lemma}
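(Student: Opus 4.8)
The plan is to prove the characterization of crystallographic groups stated in Lemma~\ref{lemma1.1} by verifying both implications, where the central object is a normal free abelian subgroup of maximal rank acted on faithfully by a finite quotient.

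\textbf{The forward direction.} Suppose $G$ is a crystallographic group of dimension $n$, so by definition $G$ is a discrete, uniform (cocompact) subgroup of $\mathbb{R}^n \rtimes O_n(\mathbb{R})$. First I would restrict the canonical projection $p : \mathbb{R}^n \rtimes O_n(\mathbb{R}) \to O_n(\mathbb{R})$ to $G$ and set $\Phi = p(G)$ and $L = G \cap \mathbb{R}^n = \ker(p|_G)$; this $L$ is exactly the subgroup of pure translations in $G$. This yields a short exact sequence
$$
1 \to L \to G \to \Phi \to 1.
$$
The heart of this direction is the classical \emph{first Bieberbach theorem}: for a crystallographic group, $L$ is a lattice in $\mathbb{R}^n$, hence $L \cong \mathbb{Z}^n$, and $\Phi = G/L$ is finite. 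I would invoke these two facts (discreteness forces $L$ to be a discrete subgroup of $\mathbb{R}^n$, and cocompactness of $G$ forces $L$ to span $\mathbb{R}^n$ as a full lattice; finiteness of $\Phi$ then follows because $\Phi$ is a discrete subgroup of the compact group $O_n(\mathbb{R})$). This gives condition~(a). For condition~(b), I would observe that conjugation in $G$ induces the action $\theta : \Phi \to \operatorname{Aut}(L) = \operatorname{Aut}(\mathbb{Z}^n)$ described in the statement, and that this action coincides with the restriction to $\Phi \subseteq O_n(\mathbb{R})$ of the standard linear action on $\mathbb{R}^n$ preserving the lattice $L$. Faithfulness of $\theta$ is then equivalent to $L$ being its own centralizer in $G$, which holds precisely because an element of $G$ centralizing every translation in the full lattice $L$ must have trivial rotational part, i.e. lie in $L$; this is again part of the content of Bieberbach's theorem (the translation subgroup is maximal abelian and self-centralizing).

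\textbf{The reverse direction.} Conversely, assume we are given a short exact sequence $0 \to \mathbb{Z}^n \to G \to \Phi \to 1$ with $\Phi$ finite and $\theta$ faithful. The goal is to realize $G$ as a crystallographic subgroup of $\mathbb{R}^n \rtimes O_n(\mathbb{R})$. Since $\theta : \Phi \to \operatorname{Aut}(\mathbb{Z}^n) = \operatorname{GL}_n(\mathbb{Z})$ is a representation of a finite group on $\mathbb{Z}^n \otimes \mathbb{R} = \mathbb{R}^n$, I would first average an arbitrary inner product over the finite group $\Phi$ to produce a $\Phi$-invariant inner product, so that after a change of basis $\theta(\Phi)$ is conjugate into $O_n(\mathbb{R})$. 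This lets us view $\theta$ as $\Phi \to O_n(\mathbb{R})$. The extension $G$ is classified by a cohomology class in $H^2(\Phi; \mathbb{Z}^n)$; using this class I would build an explicit embedding $G \hookrightarrow \mathbb{R}^n \rtimes O_n(\mathbb{R})$ sending the translation lattice $\mathbb{Z}^n$ to a full-rank lattice of translations and a lift of each $\varphi \in \Phi$ to an affine map whose linear part is $\theta(\varphi)$. Discreteness follows because the image has a finite-index translation lattice $\mathbb{Z}^n$ which is discrete, and uniformity (cocompactness) follows because the quotient is covered by finitely many translates of a fundamental domain for $\mathbb{Z}^n$, $\Phi$ being finite.

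\textbf{Anticipated main obstacle.} The most delicate point is the reverse direction: producing an honest embedding of the abstract extension $G$ into $\operatorname{Aff}(\mathbb{R}^n)$ as a discrete uniform group, rather than merely matching the data (a),(b). The subtlety is that one must choose set-theoretic lifts of $\Phi$ and use the $2$-cocycle measuring the failure of multiplicativity to define the translation parts of the affine maps consistently; faithfulness of $\theta$ is exactly what guarantees the resulting affine group has trivial intersection between its translation subgroup and the lifts, so that $\mathbb{Z}^n$ is genuinely the full set of pure translations and the map on $G$ is injective. In a paper at this level it is standard to cite this characterization (it is the statement used in~\cite{GGO}, going back to the structure theory in~\cite{Wolf}), so in practice I would present the forward direction in detail via the Bieberbach theorems and then refer to the classical theory of torsion-free and finite-group extensions for the converse, flagging the cohomological construction and the role of faithfulness of $\theta$ as the crux.
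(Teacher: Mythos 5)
The paper never proves Lemma~\ref{lemma1.1}: it is quoted as a known characterization, attributed to \cite[Lemma~8]{GGO}, which itself rests on the classical structure theory of crystallographic groups (cf.~\cite{Wolf}). Your proposal reconstructs the standard argument behind that citation, and its architecture is correct. The forward direction --- restrict the projection $p$ to get $\Phi$ and $L=G\cap\mathbb{R}^n$, invoke the first Bieberbach theorem for the facts that $L$ is a full-rank lattice and $\Phi$ is finite, then note that conjugation by $(v,A)$ carries the translation by $l$ to the translation by $Al$, so $\theta$ is faithful because a nontrivial orthogonal map cannot fix a spanning lattice pointwise --- is exactly right. The reverse direction --- average an inner product so that $\theta(\Phi)$ lands in $O_n(\mathbb{R})$, then use the extension cocycle to produce an affine realization, with faithfulness of $\theta$ being what makes $G\to\mathbb{R}^n\rtimes O_n(\mathbb{R})$ injective (the kernel of $\mathbb{R}^n\rtimes\Phi\to\mathbb{R}^n\rtimes O_n(\mathbb{R})$ is $\{(0,\varphi):\theta(\varphi)=1\}$) --- is likewise the standard route. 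What the paper's citation buys is brevity; what your argument buys is self-containedness, and it correctly isolates where each hypothesis enters.

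Two points need tightening. First, your parenthetical gloss on the first Bieberbach theorem is not a proof and is partly misleading: discreteness of $G$ does \emph{not} imply that $\Phi=p(G)$ is a discrete subgroup of $O_n(\mathbb{R})$ (a single screw motion with irrational rotation angle in $\mathbb{R}^3$ generates a discrete group whose point group is infinite and dense in a circle), and similarly the claim that cocompactness directly forces $L$ to be a full lattice is the hard content of the theorem, not an observation. Since you invoke the theorem as a black box anyway, the gloss should be deleted or flagged as heuristic. Second, in the converse, the step of ``defining the translation parts consistently'' is precisely the assertion that the image of the extension class under $H^2(\Phi;\mathbb{Z}^n)\to H^2(\Phi;\mathbb{R}^n)$ vanishes; you should state that this holds because $\Phi$ is finite and $\mathbb{R}^n$ is uniquely divisible, e.g.\ the cochain $t(\varphi)=\frac{1}{|\Phi|}\sum_{\psi\in\Phi}c(\varphi,\psi)$ trivializes the cocycle $c$ --- a second use of averaging, parallel to the inner-product step. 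With those repairs your outline is a complete proof modulo the cited Bieberbach theorem, which is an acceptable dependency given that the paper itself treats the whole lemma as a citation.
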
 
\noindent If $G$ is a crystallographic group, then the integer $n$ from Lemma~\ref{lemma1.1}  is called the dimension of $G$, the finite group $\Phi$ is called the  holonomy group of $G$, and the integral representation $\theta : \Phi \to  \operatorname{Aut} (\mathbb{Z}^n)$ is called the  holonomy representation of $G$.

Let $P'_{n} = [P_{n}, P_{n}]$ be the commutator subgroup of the pure braid group $P_{n}$. It was shown in~\cite{GGO} that the quotient $B_n /P_n'$ is a crystallographic group for all $n \geq 3$. Moreover, it was proved that $B_n /P_n'$ possesses torsion, and  there is
a one-to-one correspondence between the conjugacy classes of the finite-order elements of $B_n /P_n'$ and the conjugacy classes of the elements of odd order of the symmetric group $S_n$.

The groups $B_n /P_n'$ also arise in the study of pseudo-symmetric braided categories~\cite{PS}. In this paper the authors  consider the quotient, denoted by $PS_n$, of $B_n$ by the normal subgroup generated by the relations
$$
s_i s^{-1}_{i+1} s_i = s_{i+1} s^{-1}_i s_{i+1}, \qquad  i = 1,  \ldots, n-2,
$$
and show that $PS_n$ is isomorphic to $B_n /P_n'$.

\section{Linear representation of $B_n/P_n'$}	\label{sec2} 

Let $R = \Z[q^{\pm 1}, t^{\pm 1}]$ be the ring of Laurent polynomials with two variables $q, t$, and $V$ be the free $R$-module with the basis $e_{i,j}$ for $1\leq i < j \leq n$.
The Lawrence-Bigelow-Krammer representation (shortly LBKR) is the linear representation of the braid group $B_n$ by the automorphisms of $V$
$$
\psi = \psi_{q,t}: B_n \to \operatorname{Aut} (V)
$$
which is given on the generators $\sigma_1,\sigma_2,\dots,\sigma_{n-1}$ of $B_n$ by the following rule
\begin{equation}\label{LKBnnnn} \psi(\sigma_k)(e_{i,j})= \begin{cases}
e_{i,j}, & k < i -1\text{ or } k > j,\\
e_{i-1, j} + (1-q) e_{i,j}, & k = i - 1,\\
tq(q-1) e_{i, i+1} + qe_{i+1, j}, & k = i  <  j-1,\\
tq^2 e_{i, j},  &  k = i  =  j-1,\\
e_{i,j} +  tq^{k-i}(q-1)^2 e_{k, k+1},  &  i < k  <  j-1,\\
e_{i, j-1} + tq^{j-i}(q-1) e_{j-1, i}, &  i< k = j-1,\\
(1-q) e_{i, j} + qe_{i, j+1}, &  k = j.\\
\end{cases}
\end{equation}

\begin{example}{\rm 
If $n=3$, then the braid group $B_3$ is generated by $\sigma_1,\sigma_2$, and the module $V$ has the basis $\{ e_{1,2},  e_{1,3}, e_{2,3}\}$. From formula (\ref{LKBnnnn}) it follows that the linear maps $\psi(\sigma_1)$, $\psi(\sigma_2)$, $\psi(\sigma_1)^{-1}$, $\psi(\sigma_2)^{-1}$ can be presented by the following matrices, which are written in the basis $e_{1,2}, e_{1,3}, e_{2,3}$ of $V$
\begin{align*}
[\psi(\sigma_1)] &=
\begin{pmatrix}
	tq^2 & 0 & 0 \\
	tq(q-1) & 0 & q \\
	0 & 1 & 1-q
\end{pmatrix},\\ 
[\psi(\sigma_2)] &=
\begin{pmatrix}
	1-q & q & 0 \\
	1 & 0 & tq^2(q-1) \\
	0 & 0 & tq^2
\end{pmatrix},\\
[\psi(\sigma_1)]^{-1} &=
\begin{pmatrix}
	t^{-1}q^{-2} & 0 & 0 \\
-1+2q^{-1} -q^{-2} & 1-q^{-1} & 1 \\
-q^{-1}+q^{-2} & q^{-1} &0
\end{pmatrix}, \\
[\psi(\sigma_2)]^{-1} &=
\begin{pmatrix}
0 & 1 & -q+1 \\
q^{-1}  & 1-q^{-1} & -q+2-q^{-1} \\
0 & 0 & 	t^{-1}q^{-2}
\end{pmatrix}.
\end{align*}}
\end{example} 

For two complex numbers $q_0, t_0\in\mathbb{C}$ denote by $V_{q_0, t_0}$ the free module  over the ring $\mathbb{Z}[q_0, t_0]$ with the same basis as $V$, and denote by $\psi_{q_0, t_0}$ the specialization of $\psi_{q, t}$ obtained by putting $q=q_0$, $t=t_0$.  The above specialization gives the linear representation $\psi_{q_0, t_0} : B_n \to \operatorname{Aut} (V_{q_0, t_0})$. In particular, the map $\psi_{1,t} : B_n \to \operatorname{Aut} (V_{1,t})$ is defined on the generators $\sigma_1,\sigma_2,\dots,\sigma_{n-1}$ of $B_n$ by the following rules
$$
\psi_{1,t}(\sigma_k) :\begin{cases}
e_{k,k+1} \mapsto  t e_{k,k+1}, &  \\
e_{i,j} \mapsto  e_{s_k(i),s_k(j)}, & \text{if}~(i,j) \not= (k,k+1)~\text{and}~1\leq i < j \leq n,\\
\end{cases}
$$
where $s_{k} = (k,k+1)$ is the transposition from $S_{n}$. In this formula we assume that $e_{j,i}=e_{i,j}$ if $i<j$.
\begin{lemma} \label{lemma2.1}
If $q = 1$ and $t = \pm1$, then the image of $B_n$ under the specialization $\psi_{1, \pm1}$ of LBKR is isomorphic
to  the symmetric group $S_n$.
\end{lemma}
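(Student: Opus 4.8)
The plan is to sandwich $\operatorname{im}(\psi_{1,\pm1})$ between two copies of $S_n$: I will produce a surjection $S_n \twoheadrightarrow \operatorname{im}(\psi_{1,\pm1})$ coming from a presentation of $S_n$, and, in the other direction, a surjection $\operatorname{im}(\psi_{1,\pm1}) \twoheadrightarrow S_n$ coming from ``forgetting scalars'', and then compare orders. Throughout I assume $n \geq 3$; the case $n=2$ must be checked by hand, since there $B_2\cong\mathbb{Z}$ and $\psi_{1,1}(\sigma_1)=\mathrm{id}$ gives a trivial image while $\psi_{1,-1}(\sigma_1)$ gives $\{\pm1\}\cong S_2$, so only $t=-1$ yields $S_2$. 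The first thing I would record, directly from the displayed formula for $\psi_{1,t}(\sigma_k)$ preceding the lemma, is that each generator acts as a \emph{monomial} (generalized permutation) matrix on the basis $\{e_{i,j}\}$: it sends $e_{k,k+1}\mapsto t\,e_{k,k+1}$ and every other $e_{i,j}\mapsto e_{s_k(i),s_k(j)}$, where $s_k=(k,k+1)$ and $e_{j,i}=e_{i,j}$. Thus the underlying basis permutation is exactly the action of $s_k$ on unordered pairs $\{i,j\}$, and the factor $t$ on $e_{k,k+1}$ is the only scalar $\neq 1$ that ever occurs.

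Next I would compute $\psi_{1,t}(\sigma_k)^2$. A vector $e_{i,j}$ with $(i,j)\neq(k,k+1)$ goes to $e_{s_k(i),s_k(j)}$, whose index pair is still different from $\{k,k+1\}$ because $s_k$ fixes $\{k,k+1\}$ setwise; a second application therefore returns it to $e_{s_k^2(i),s_k^2(j)}=e_{i,j}$ with \emph{no} scalar, while $e_{k,k+1}\mapsto t^2 e_{k,k+1}$. Hence $\psi_{1,t}(\sigma_k)^2=\mathrm{id}$ exactly when $t^2=1$, i.e.\ for $t=\pm1$ — this is precisely where the hypothesis is used. Since $\psi$ is a homomorphism, the matrices $\psi_{1,\pm1}(\sigma_k)$ inherit the braid relations $(\ref{eq1})$, $(\ref{eq2})$; together with $\psi_{1,\pm1}(\sigma_k)^2=\mathrm{id}$, these are exactly the Coxeter relations $(\ref{eq12})$--$(\ref{eq14})$ presenting $S_n$ under $s_k\mapsto\psi_{1,\pm1}(\sigma_k)$. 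So this assignment extends to a surjective homomorphism $S_n\twoheadrightarrow\operatorname{im}(\psi_{1,\pm1})$, showing $\operatorname{im}(\psi_{1,\pm1})$ is a quotient of $S_n$, in particular finite of order dividing $n!$.

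For the reverse bound I would use that the monomial matrices form a group on which the assignment ``monomial matrix $\mapsto$ its underlying basis permutation'' is a homomorphism. Restricting to $\operatorname{im}(\psi_{1,\pm1})$ gives a homomorphism onto a subgroup of $\operatorname{Sym}\big(\{\,\{i,j\}\,\}\big)$, and by the first paragraph $\psi_{1,\pm1}(\sigma_k)$ maps to the permutation of pairs induced by $s_k$. Precomposing with $S_n\twoheadrightarrow\operatorname{im}(\psi_{1,\pm1})$ yields the natural action of $S_n$ on unordered $2$-subsets of $\{1,\dots,n\}$, which is faithful for $n\geq3$: if $w\in S_n$ fixes every pair then $\{w(1),w(c)\}=\{1,c\}$ for $c=2,3$ forces $w(1)=1$, and likewise $w=\mathrm{id}$. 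Consequently $\operatorname{im}(\psi_{1,\pm1})$ surjects onto a copy of $S_n$, so $\lvert\operatorname{im}(\psi_{1,\pm1})\rvert\geq n!$. Combined with the previous paragraph this gives $\lvert\operatorname{im}(\psi_{1,\pm1})\rvert=n!$, both surjections are isomorphisms, and $\operatorname{im}(\psi_{1,\pm1})\cong S_n$.

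The only genuinely delicate point is the bookkeeping in the squaring step: one must confirm that one application of $\psi_{1,t}(\sigma_k)$ never sends a pair $\neq\{k,k+1\}$ onto $\{k,k+1\}$, so that no spurious factor of $t$ is picked up on the second application and $t^2$ is indeed the total scalar. This is immediate from $s_k$ fixing $\{k,k+1\}$ setwise, so the obstacle is mild; the entire weight of the hypothesis $t=\pm1$ sits in the single identity $t^2=1$ used to annihilate $\psi_{1,t}(\sigma_k)^2$. The other thing to handle carefully is the low-dimensional exception $n=2$ noted above.
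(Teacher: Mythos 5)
Your proof is correct, and it is strictly more complete than the paper's. The paper's entire argument is the observation you make in your squaring step: $\left(\psi_{1,\pm1}(\sigma_i)\right)^2$ is the identity automorphism, which the paper declares to be ``enough.'' Strictly speaking, that observation only shows that the image is a homomorphic image of $S_n$ (braid relations plus involutivity give the Coxeter presentation); since $S_n$ does have proper nontrivial quotients (the sign quotient $\mathbb{Z}/2$ for every $n$, and also $S_3$ when $n=4$), one still needs a lower bound to conclude the image is all of $S_n$, and the paper leaves this entirely implicit. Your third paragraph supplies exactly the missing half: the image consists of monomial matrices, the homomorphism ``monomial matrix $\mapsto$ underlying basis permutation'' composed with $S_n\twoheadrightarrow\operatorname{im}(\psi_{1,\pm1})$ is the natural action of $S_n$ on unordered $2$-subsets, and that action is faithful for $n\geq 3$, forcing $\lvert\operatorname{im}(\psi_{1,\pm1})\rvert = n!$. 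You also correctly flag the degenerate case $n=2$, $t=1$, where the image is trivial and the statement as written actually fails --- an exception the paper does not mention. In short: you start from the same key computation as the paper, and then add the two pieces of care (the reverse surjection and the low-$n$ exception) that the paper's one-line proof omits.
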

\begin{proof}
It is enough to observe that $\left(\psi_{1, \pm1}(\sigma_i)\right)^2$ is the identity automorphism. This property immidiately follows from the definition of $\psi_{1, \pm1}$.
\end{proof}
From the expressions of the generators $a_{i,j}~(1\leq i<j\leq n)$ of $P_n$ introduced in Section~\ref{braidsandautomorphisms} and direct calculations we have the following result. 
\begin{lemma} \label{lemma2.2}
The representation $\psi_{1,t}:B_n\to{\rm Aut}(V_{1,t})$ acts on the generators  $a_{i,j}~(1\leq i<j\leq n)$ of $P_n$ by the following rules
$$
\psi_{1,t}(a_{ij}) : \begin{cases}
e_{i,j} \mapsto t^2 e_{i,j}, &  \\
e_{k,l} \mapsto e_{k,l}, & ~\text{for}~ (i,j) \not= (k, l).\\
\end{cases}
$$
\end{lemma}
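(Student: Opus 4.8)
The plan is to verify Lemma~\ref{lemma2.2} by direct computation, using the explicit formula for $\psi_{1,t}(\sigma_k)$ derived just before the statement together with the defining expressions for the generators $a_{i,j}$ of $P_n$ recalled in Section~\ref{braidsandautomorphisms}. The key simplification is that at the specialization $q=1$, $t$ arbitrary, the map $\psi_{1,t}(\sigma_k)$ acts as the simple permutation $e_{i,j}\mapsto e_{s_k(i),s_k(j)}$ on all basis elements except that it multiplies $e_{k,k+1}$ by $t$; so $\psi_{1,t}(\sigma_k)$ is essentially a ``signed permutation'' of the basis $\{e_{i,j}\}$, with the only nontrivial scalar $t$ attached to the single index pair $(k,k+1)$. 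This makes composing these maps along a word in the $\sigma_k$ very tractable: one tracks how the index pair $(i,j)$ moves under the underlying permutations and records a factor of $t$ each time an intermediate pair coincides with the distinguished pair of the generator being applied.

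First I would treat the base case $a_{i,i+1}=\sigma_i^2$. Here $\psi_{1,t}(\sigma_i)$ sends $e_{i,i+1}\mapsto t\,e_{i,i+1}$, so $\psi_{1,t}(\sigma_i)^2$ sends $e_{i,i+1}\mapsto t^2 e_{i,i+1}$; and for $(k,l)\neq(i,i+1)$ the two applications of the transposition $s_i=(i,i+1)$ compose to the identity permutation on the remaining pairs (while never hitting the scalar, since none of those intermediate pairs equals $(i,i+1)$ — one must check that $s_i(k,l)=(i,i+1)$ forces $(k,l)=(i,i+1)$, which is immediate since $s_i$ is an involution). This gives exactly $e_{k,l}\mapsto e_{k,l}$, establishing the claim for $a_{i,i+1}$.

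Next I would handle the general generator $a_{i,j}=\sigma_{j-1}\cdots\sigma_{i+1}\,\sigma_i^2\,\sigma_{i+1}^{-1}\cdots\sigma_{j-1}^{-1}$ for $i+1<j$. The natural approach is conjugation: writing $w=\sigma_{j-1}\sigma_{j-2}\cdots\sigma_{i+1}$, we have $a_{i,j}=w\,\sigma_i^2\,w^{-1}$, so $\psi_{1,t}(a_{i,j})=\psi_{1,t}(w)\,\psi_{1,t}(\sigma_i)^2\,\psi_{1,t}(w)^{-1}$. I would compute the underlying permutation of $w$ on index pairs and show that $\psi_{1,t}(w)$ carries $e_{i,j}$ to $e_{i,i+1}$ (up to a power of $t$ that cancels against the inverse), so that conjugating the operator ``multiply $e_{i,i+1}$ by $t^2$, fix everything else'' by $\psi_{1,t}(w)$ yields precisely ``multiply $e_{i,j}$ by $t^2$, fix everything else.'' The cleanest way to see the factors of $t$ cancel is to argue that conjugation of a diagonal-type operator by any signed permutation operator produces another operator of the same type, where the single eigenvalue $t^2$ simply migrates to the new distinguished basis vector $e_{i,j}$, and the scalars picked up by $\psi_{1,t}(w)$ on off-diagonal pairs are undone verbatim by $\psi_{1,t}(w^{-1})$.

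The main obstacle I anticipate is bookkeeping: one must confirm that the word $w$ (whose underlying permutation is the cycle moving $i+1$ up to $j$) really sends the pair $(i,j)$ to $(i,i+1)$ under the induced action on unordered pairs, and that along the way no spurious extra factor of $t$ survives after conjugating back. I would make this rigorous by the clean structural observation that each $\psi_{1,t}(\sigma_k)$ is conjugation-covariant in the sense that $\psi_{1,t}(\sigma_k)$ intertwines the ``scalar-$t$-at-$(k,k+1)$'' structure, so that the total $t$-weight of a basis vector under a conjugate $g\,D\,g^{-1}$ depends only on the permutation class of the distinguished pair. Concretely, since every scalar introduced by $\psi_{1,t}(w)$ is a \emph{monomial} in $t$ (never $1-q$ or similar, because $q=1$ kills all the off-diagonal polynomial coefficients), these monomials cancel exactly against $\psi_{1,t}(w^{-1})$, leaving the bare migrated eigenvalue $t^2$. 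This reduces the whole lemma to elementary permutation tracking, which is routine once the $q=1$ collapse is exploited.
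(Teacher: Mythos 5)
Your proposal is correct and takes essentially the same route as the paper: the paper gives no written proof beyond asserting the lemma follows ``from the expressions of the generators $a_{i,j}$ \dots and direct calculations,'' and your argument is exactly that computation, cleanly organized by the observation that each $\psi_{1,t}(\sigma_k)$ is a monomial (scaled-permutation) matrix, so that conjugating the diagonal operator for $\sigma_i^2$ by $\psi_{1,t}(\sigma_{j-1}\cdots\sigma_{i+1})$ simply migrates the eigenvalue $t^2$ from $e_{i,i+1}$ to $e_{i,j}$ with all intermediate powers of $t$ cancelling. Both the base case $a_{i,i+1}=\sigma_i^2$ and the conjugation bookkeeping (including the check that the induced permutation of index pairs carries $(i,j)$ to $(i,i+1)$) are handled correctly.
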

The following theorem is the main results of the present section.
\begin{theorem} \label{theorem2.1}
The image of $B_n$ under  $\psi_{1,t}$ is isomorphic to  $B_n/P_n'$.
\end{theorem}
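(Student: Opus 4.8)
The plan is to show that $\psi_{1,t}$ factors through the quotient $B_n/P_n'$ and that the induced homomorphism is faithful; since $\operatorname{im}(\psi_{1,t})$ equals the image of the induced map, this yields $\operatorname{im}(\psi_{1,t})\cong B_n/P_n'$. Throughout I assume $n\geq 3$: the case $n=2$ is immediate, since $P_2'=1$ while $\psi_{1,t}$ sends $\sigma_1$ to multiplication by $t$ on the one-dimensional module $V_{1,t}$, so $\operatorname{im}(\psi_{1,t})\cong\mathbb{Z}\cong B_2$. The first step is to check that $P_n'\subseteq\ker(\psi_{1,t})$. By Lemma~\ref{lemma2.2} each $\psi_{1,t}(a_{i,j})$ is the diagonal automorphism $D_{i,j}$ that scales $e_{i,j}$ by $t^2$ and fixes the remaining basis vectors; these pairwise commute, so $\psi_{1,t}(P_n)$ is abelian and $P_n'=[P_n,P_n]$ lies in the kernel. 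Hence $\psi_{1,t}$ descends to a homomorphism $\bar{\psi}\colon B_n/P_n'\to\operatorname{Aut}(V_{1,t})$ with the same image.

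Next I would determine $\bar{\psi}$ on the pure part. Since $P_n/P_n'$ is free abelian of rank $n(n-1)/2$ with basis the classes of the $a_{i,j}$, and $\bar{\psi}$ sends $a_{i,j}$ to $D_{i,j}$, it suffices to note that the $D_{i,j}$ are $\mathbb{Z}$-linearly independent in the exponent lattice of the group of diagonal monomial automorphisms (the class of $D_{i,j}$ is twice the standard vector indexed by $(i,j)$). Thus $\bar{\psi}$ restricts to an isomorphism of $P_n/P_n'$ onto the free abelian group $\langle D_{i,j}\rangle$; in particular $\bar\psi$ is injective on the subgroup $P_n/P_n'$.

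The heart of the argument is the faithfulness of $\bar{\psi}$, obtained from the monomial structure of the representation. From the displayed formula for $\psi_{1,t}$, every $\psi_{1,t}(\sigma_k)$ is a monomial automorphism: it sends each basis vector to a scalar multiple of a basis vector, and its underlying permutation of the set $\Sigma=\{e_{i,j}\}$ is the action of the transposition $s_k=(k,k+1)$ on $2$-element subsets of $\{1,\dots,n\}$ (using $e_{j,i}=e_{i,j}$). Products of monomial automorphisms are monomial, so $\operatorname{im}(\bar{\psi})$ lies in the group $M$ of monomial automorphisms with nonzero entries in $\langle t\rangle$, and the ``forget the scalars'' map $\pi\colon M\to\operatorname{Sym}(\Sigma)$ is a homomorphism whose kernel is the diagonal torus. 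The composite $\pi\circ\bar{\psi}$ kills $P_n/P_n'$ (which lands in the torus) and therefore factors through $S_n$, inducing exactly the natural action $S_n\to\operatorname{Sym}(\Sigma)$ on $2$-subsets. For $n\geq 3$ this action is faithful: if $\rho\in S_n$ fixes every $2$-subset then, for three distinct indices $i,j,k$, the equalities $\{\rho(i),\rho(j)\}=\{i,j\}$ and $\{\rho(i),\rho(k)\}=\{i,k\}$ force $\rho(i)\in\{i,j\}\cap\{i,k\}=\{i\}$, so $\rho=\operatorname{id}$. Consequently $\ker(\pi\circ\bar{\psi})$ is the preimage of $1\in S_n$, namely $P_n/P_n'$, whence $\ker(\bar{\psi})\subseteq P_n/P_n'$; combined with the injectivity of $\bar{\psi}$ on $P_n/P_n'$ from the previous step, this gives $\ker(\bar{\psi})=1$ and finishes the proof.

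The step I expect to be the main obstacle is the monomial bookkeeping of the third paragraph: verifying that scalars and permutations decouple as claimed, and that the permutation underlying $\psi_{1,t}(\sigma_k)$ is \emph{exactly} the $2$-subset action of $s_k$, so that $\pi\circ\bar{\psi}$ really is the natural faithful $S_n$-representation. Once this wreath-product picture is set up, the faithfulness of the $2$-subset action together with the torus computation closes the argument; the factoring through $B_n/P_n'$ and the injectivity on the pure part are routine consequences of Lemma~\ref{lemma2.2}.
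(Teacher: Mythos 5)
Your proposal is correct and follows essentially the same route as the paper: both arguments split an element along the extension $1 \to P_n/P_n' \to B_n/P_n' \to S_n \to 1$, detect non-pure elements through the permutation part of the monomial matrices, and detect pure elements through the diagonal matrices of Lemma~\ref{lemma2.2}. Your ``forget the scalars'' projection $\pi\colon M\to\operatorname{Sym}(\Sigma)$ is exactly the paper's specialization $t=1$ (Lemma~\ref{lemma2.1}) in different clothing, though you fill in details the paper leaves implicit, namely the factorization through the quotient and the faithfulness of the $S_n$-action on $2$-subsets for $n\geq 3$.
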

\begin{proof}In Section~\ref{braidsandautomorphisms} we have noticed that the following short exact sequence holds
	$$1\to P_n\to B_n\to S_n \stackrel{\pi}{\to} 1.$$
From this short exact sequence and the isomorphism $$(B_n/P_n^{\prime})/(P_n/P_n^{\prime})=B_n/P_n$$
we have the following short exact sequence
$$
1 \to P_n/P_n' \to B_n/P_n' \stackrel{\pi}{\to}  S_n \to 1.
$$
From this short exact sequence it follows that every element $b \in B_n/P_n'$ can be expressend in the form $b = p \lambda$, where $p \in P_n / P_n'$ and $\lambda$ is a coset representative of $P_n / P_n'$ in $B_n / P_n'$. In order to prove the statement of the theorem we have to prove that if $b$ is non-trivial, then $\psi_{1,t}(b)$ in non-identity automorphism. 

Let $b=p\lambda\neq 1$. If $\lambda \not= 1$, then $\pi(b) \not= 1$, and hence the composition of $\psi_{1,t}$ and specialization $t=1$ send $b$ to non-trivial automorphism. Therefore,  $\psi_{1,t}(b)$ is non-trivial. If $\lambda = 1$ and $b \not= 1$, then $b$ can be presented in the form
$$
b = a_{1,2}^{\alpha_{1,2}} \left(a_{1,3}^{\alpha_{1,3}}  a_{2,3}^{\alpha_{2,3}}\right)  \ldots \left(a_{1,n}^{\alpha_{1,n}} \ldots a_{n-1,n}^{\alpha_{n-1,n}}\right) P_n'
$$
for some integers $\alpha_{i,j} \in \mathbb{Z}$. From Lemma~\ref{lemma2.2} it follows that the matrix of the automorphism  $\psi_{1,t}(b)$ is the following diagonal matrix
$$
[\psi_{1,t}(b)] = \operatorname{diag} (t^{2\alpha_{1,2}}, t^{2\alpha_{1,3}}, t^{2\alpha_{2,3}} \ldots t^{2\alpha_{1,n}} \ldots t^{2\alpha_{n-1,n}}),
$$
which is not the identity matrix. Hence, we have proved that if $b$ is non-trivial, then its image $\psi_{1,t}$ is non-trivial.
\end{proof}

It was shown in~\cite{GGO} that the group $B_n / P_n'$ possesses torsion, and there
is a one-to-one correspondence between the conjugacy classes of the finite-order elements of
$B_n / P_n'$ and the conjugacy classes of the elements of odd order of the symmetric group
$S_n$. The group $B_3/P_3'$ was considered separately in the paper~\cite{GGO}. Using the linear representation $\psi_{1,t}$ we obtain the following statement.
\begin{proposition} \label{proposition2.1} 
The group $B_3/P_3'$ has only $3$-torsion. Every element which has one of the following forms
\begin{align*}
a_{1,2}^{\alpha} a_{1,3}^{\beta} a_{2,3}^{\gamma} \sigma_1 \sigma_2,&& a_{1,2}^{\alpha} a_{1,3}^{\beta} a_{2,3}^{\gamma} \sigma_2 \sigma_1,
\end{align*}
where $\alpha, \beta, \gamma \in \mathbb{Z}$ are such that $\alpha+\beta+\gamma=-1$, has order $3$. Any other non-trivial element has infinite order.
\end{proposition}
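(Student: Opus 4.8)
The plan is to analyze the torsion in $B_3/P_3'$ via the faithful linear representation $\psi_{1,t}$ established in Theorem~\ref{theorem2.1}, together with the correspondence from \cite{GGO} between conjugacy classes of finite-order elements of $B_n/P_n'$ and conjugacy classes of odd-order elements of $S_n$. For $n=3$, the only nontrivial odd-order elements of $S_3$ are the two $3$-cycles $(1,2,3)$ and $(1,3,2)$, which are conjugate; hence every nontrivial finite-order element of $B_3/P_3'$ maps under $\pi:B_3/P_3'\to S_3$ to a $3$-cycle, and so the torsion is exactly $3$-torsion. This immediately establishes the first assertion that the group has only $3$-torsion, and tells us that any torsion element projects to either $(1,2,3)=\varphi(\sigma_1\sigma_2)$ or $(1,3,2)=\varphi(\sigma_2\sigma_1)$.

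Next I would write an arbitrary element projecting to $(1,2,3)$ in the form $b = p\,\sigma_1\sigma_2$ with $p\in P_3/P_3'$, and using the normal form from the proof of Theorem~\ref{theorem2.1} express $p = a_{1,2}^{\alpha}a_{1,3}^{\beta}a_{2,3}^{\gamma}$ modulo $P_3'$ (here I use that $P_3/P_3'$ is free abelian of rank $3$ on the images of $a_{1,2},a_{1,3},a_{2,3}$). The key computational step is to compute the $3\times 3$ matrix $[\psi_{1,t}(b)]$ in the basis $e_{1,2},e_{1,3},e_{2,3}$. By Lemma~\ref{lemma2.2} the pure part $\psi_{1,t}(p)$ is the diagonal matrix $\operatorname{diag}(t^{2\alpha},t^{2\beta},t^{2\gamma})$, while $\psi_{1,t}(\sigma_1)$ and $\psi_{1,t}(\sigma_2)$ are the monomial permutation-type matrices obtained by specializing $q=1$ in the Example (each sends $e_{k,k+1}\mapsto t\,e_{k,k+1}$ and otherwise permutes basis vectors by $s_k$). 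Multiplying these, $[\psi_{1,t}(b)]$ becomes a monomial matrix whose underlying permutation is the $3$-cycle corresponding to $(1,2,3)$ acting on the index pairs.

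The element $b$ has finite order if and only if this monomial matrix has finite order, which forces its cube to be scalar; since the permutation is a $3$-cycle, $[\psi_{1,t}(b)]^3$ is the diagonal matrix whose single repeated entry is $t^{2(\alpha+\beta+\gamma)+c}$ for a fixed constant $c$ coming from the three factors of $t$ contributed by the $\sigma_i$'s. By faithfulness of $\psi_{1,t}$ (Theorem~\ref{theorem2.1}), $b^3=1$ in $B_3/P_3'$ if and only if $[\psi_{1,t}(b)]^3$ is the identity matrix, i.e.\ that diagonal entry equals $1$ as a Laurent polynomial in $t$; this yields a single linear Diophantine condition $\alpha+\beta+\gamma=-1$ after bookkeeping the exponent. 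Running the symmetric computation for elements projecting to $(1,3,2)$, written as $b=a_{1,2}^{\alpha}a_{1,3}^{\beta}a_{2,3}^{\gamma}\sigma_2\sigma_1$, gives the same condition. Finally, any nontrivial element whose image in $S_3$ is not a $3$-cycle (i.e.\ is trivial or a transposition) cannot have finite order, by the $3$-torsion conclusion above, so all remaining nontrivial elements have infinite order.

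The main obstacle I anticipate is the exact exponent bookkeeping in the third step: one must carefully track the power of $t$ produced when multiplying out $\psi_{1,t}(\sigma_1)\psi_{1,t}(\sigma_2)$ (and its cube), since the constant $c$ determines whether the finiteness condition reads $\alpha+\beta+\gamma=-1$ rather than $=0$ or some other value. This requires pinning down, for each basis vector $e_{i,j}$, exactly how many times it passes through a position $(k,k+1)$ during the three-fold application of the cycle and thereby accumulates a factor of $t$. Everything else is routine: the structure of the argument is clean once faithfulness and Lemma~\ref{lemma2.2} are in hand, and the $S_3$-level restriction to $3$-cycles is immediate from \cite{GGO}.
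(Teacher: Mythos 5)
Your proposal is correct, and its computational core is the same as the paper's: write an element over a $3$-cycle as $p\lambda$ with $p=\operatorname{diag}(t^{2\alpha},t^{2\beta},t^{2\gamma})$ and $\lambda$ a monomial matrix whose underlying permutation is a $3$-cycle, then use faithfulness (Theorem~\ref{theorem2.1}) to reduce finiteness of order to the cube being the identity. Your undetermined constant works out to $c=2$: one checks that $\lambda_3^3$ equals $t^2$ times the identity matrix (each diagonal entry of the cube accumulates two factors of $t$, not three, as your parenthetical remark suggests), so the condition $2(\alpha+\beta+\gamma)+2=0$ gives exactly $\alpha+\beta+\gamma=-1$. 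The genuine divergence is in how all remaining elements are handled. The paper's proof is self-contained modulo Theorem~\ref{theorem2.1}: it runs through all six coset representatives $\lambda_0,\dots,\lambda_5$ and eliminates the transposition cosets and the identity coset by direct computation of even powers, e.g. $(p\lambda_1)^{2k}=\operatorname{diag}(t^{2k(2\alpha+1)},t^{2k(\beta+\gamma)},t^{2k(\beta+\gamma)})$, which is never trivial since $2\alpha+1\neq 0$. You instead dispose of these cases by citing the correspondence of \cite{GGO}. That is logically legitimate and shorter, but note two costs: (i) the first assertion of the proposition (``only $3$-torsion'') becomes an import from \cite{GGO} rather than a consequence of the representation, whereas the paper's stated aim is to obtain the statement ``using the linear representation $\psi_{1,t}$''; (ii) your inference that every nontrivial torsion element projects to a $3$-cycle, and hence has order exactly $3$, uses more than the bare bijection of conjugacy classes quoted in this paper --- it needs that the GGO correspondence is induced by the projection $\pi$ and preserves orders (which is true in \cite{GGO}); alternatively, one can close this by combining the bijection with your own computation exhibiting an order-$3$ element in the nontrivial class, which your argument does in fact supply.
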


\begin{proof}
Since $\psi_{1,t}(B_n) \cong B_3/P_3'$, we can present elements of $B_3/P_3'$ by matrices in $\operatorname{GL}_{3}(\mathbb{Z}[t^{\pm 1}])$. For $n=3$ any element $b \in B_3/P_3'$ can be presented in the form $b = p \lambda$, where $p \in P_3/P_3'$ has the following form
$$
p = \operatorname{diag} (t^{2\alpha}, t^{2\beta}, t^{2\gamma}), \qquad ~\text{for}~\alpha, \beta, \gamma \in \mathbb{Z},
$$
and $\lambda$ is one of the following matrices
\begin{align*}
\lambda_0 &= 
\begin{pmatrix}
	1 & 0 & 0 \\
	0 & 1 & 0 \\
	0 & 0 & 1
\end{pmatrix},&&&\lambda_1 =
\begin{pmatrix}
	t & 0 & 0 \\
	0 & 0 & 1 \\
	0 & 1 & 0
\end{pmatrix},&& \lambda_2 =
\begin{pmatrix}
	0 & 1 & 0 \\
	1 & 0 & 0 \\
	0 & 0 & t
\end{pmatrix},\\
\lambda_3 &=
\begin{pmatrix}
	0 & t & 0 \\
	0 & 0 & t \\
	1 & 0 & 0
\end{pmatrix},&&& 
\lambda_4 =
\begin{pmatrix}
	0 & 0 & 1 \\
	t & 0 & 0 \\
	0 & t & 0
\end{pmatrix},&& 
\lambda_5 =
\begin{pmatrix}
	0 & 0 & t \\
	0 & t & 0 \\
	t & 0 & 0
\end{pmatrix}.
\end{align*}
Let us consider each $\lambda_i$ separately. First, it is obvious that the element $p\lambda_0$ has infinite order. If the element $p\lambda_1$ has a finite order, then from the matrix form of $\lambda_1$ it is clear that this order has to be even. Since for an arbitrary integer $k$ we have
$$
(p\lambda_1)^{2k} = \operatorname{diag} (t^{2k(2\alpha+1)},  t^{2k(\beta+\gamma)},  t^{2k(\beta+\gamma)}),
$$
the element $p\lambda_1$ has infinite order.
Further if the element $p\lambda_2$ has a finite order, then from the matrix form of $\lambda_2$ we see that this order is even. Since for an arbitrary integer $k$ we have
$$
(p\lambda_2)^{2k} = \operatorname{diag} (t^{2k(\alpha+\beta)},  t^{2k(\alpha+\beta)},  t^{2k(2\gamma+1)}),
$$
the element $p\lambda_2$ has infinite order. From the matrix representations it is clear that the order of the element $p\lambda_3$ is divisible by $3$. For an arbitrary $k$ we have the equality
$$
(p\lambda_3)^{3k} = \operatorname{diag} (t^{2k(\alpha+\beta+\gamma+1)},  t^{2k(\alpha+\beta+\gamma+1)},  t^{2k(\alpha+\beta+\gamma+1)}),
$$
i.~e. if $\alpha+\beta+\gamma+1 =0$, then  $p\lambda_3$ has order 3, otherwise $p\lambda_3$ has infinite order. Using the same argument, the order of the element $p\lambda_4$ is divisible by $3$, and for an arbitrary $k$ we have
$$
(p\lambda_4)^{3k} = \operatorname{diag} (t^{2k(\alpha+\beta+\gamma+1)},  t^{2k(\alpha+\beta+\gamma+1)},  t^{2k(\alpha+\beta+\gamma+1)}),
$$
i.~e. if $\alpha+\beta+\gamma+1 =0$, then  $p\lambda_4$ has order 3, otherwise $p\lambda_4$ has infinite order. It is easy to check that elements of the form $p\lambda_5$ have infinite order
\end{proof}
In general case it is not difficult to prove the following statement.
\begin{proposition} \label{proposition2.2} 
If $b$ is a non-trivial element in $B_n/P_n'$ such that the permutation $\pi(b)$ is non-identity but fixes some points, then $b$ has infinite order.
\end{proposition}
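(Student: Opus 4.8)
The plan is to read off the order of $b$ from the faithful representation $\psi_{1,t}$ (Theorem~\ref{theorem2.1}) and turn the question into a combinatorial statement about exponent sums. Since each $\psi_{1,t}(\sigma_k^{\pm1})$ sends every basis vector $e_{i,j}$ to a $t^{\pm1}$- or $t^0$-multiple of another basis vector, the matrix $M:=\psi_{1,t}(b)$ is a monomial (generalized permutation) matrix: there are integers $c_{i,j}$ with $M(e_{i,j})=t^{c_{i,j}}e_{\pi(i),\pi(j)}$, where $\pi=\pi(b)$ acts on the $\binom{n}{2}$ unordered pairs (using $e_{j,i}=e_{i,j}$). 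As $\psi_{1,t}$ is faithful, $b$ has the same order as $M$. Decomposing $\pi$ on pairs into orbits, one checks that $M^{N}$, with $N$ the least common multiple of the orbit lengths, is the diagonal matrix whose entry on $e_{i,j}$ is $t^{(N/\ell)S_{O}}$, where $O$ is the $\pi$-orbit of $\{i,j\}$, $\ell=|O|$, and $S_O=\sum_{\{k,l\}\in O}c_{k,l}$. Hence $b$ has infinite order precisely when some orbit sum $S_O$ is non-zero, so the task reduces to exhibiting one orbit on pairs with $S_O\neq0$.

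Next I would isolate the part of $S_O$ that the pure-braid factor of $b$ cannot affect. Two elements of $B_n/P_n'$ with the same image $\pi$ differ by an element of $P_n/P_n'$, and by Lemma~\ref{lemma2.2} multiplying $b$ by $a_{k,l}^{\pm1}$ changes $c_{k,l}$ by $\pm2$ and leaves $\pi$ unchanged; thus $S_O$ changes only by an even integer, so the parity $S_O\bmod 2$ depends only on $\pi$. This parity is read off from the specialization $t=-1$: by Lemma~\ref{lemma2.1} the map $\psi_{1,-1}$ factors through $S_n$ and kills $P_n$, so $\psi_{1,-1}(b)$ is the signed permutation matrix with signs $(-1)^{c_{i,j}}$, and $(-1)^{S_O}$ is exactly its sign-product around $O$. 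The sharpened goal is therefore to find a $\pi$-orbit $O$ with $S_O$ \emph{odd}, for then $S_O\neq0$ for every lift and $b$ has infinite order.

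The hard part is producing such an orbit, and it forces a cycle of even length. If $\pi$ has a cycle of even length $2k$, written $(x_1x_2\cdots x_{2k})$, I would take the diametrically opposite pair $\{x_1,x_{k+1}\}$; it generates a $\pi$-orbit of length $k$, and a direct computation (modelled on $\pi=(1\,2\,3\,4)$, where the orbit $\{\{1,3\},\{2,4\}\}$ has exponent sum $1$) should show $S_O$ is odd. Granting this computation, every $b$ whose permutation has an even-length cycle has infinite order.

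The obstacle I do not expect to overcome is that the stated hypothesis — $\pi(b)$ merely fixing a point while being non-identity — is by itself \emph{insufficient}, and the conclusion fails in this generality once $n\geq4$. For instance, in $B_4/P_4'$ the element $b=a_{1,2}^{-1}\sigma_1\sigma_2$ has $\pi(b)=(1\,3\,2)$, which fixes the point $4$, yet a direct computation gives $\psi_{1,t}(b)^3=I$, so $b$ has order $3$. The reason is structural: when $\pi$ has odd order every orbit length $\ell$ has $N/\ell$ odd, which forces all $S_O$ even and hence adjustable to $0$ by the pure-braid factor. I therefore expect the intended hypothesis to be that $\pi(b)$ has \emph{even} order, equivalently a cycle of even length; for $n=3$ this is automatic, since a non-identity permutation of $S_3$ fixing a point is a transposition, which recovers the stated claim and matches Proposition~\ref{proposition2.1}.
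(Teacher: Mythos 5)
You are right, and there is nothing in the paper to compare your argument against: Proposition~\ref{proposition2.2} is stated with no proof at all (only the remark that it ``is not difficult to prove''), and your analysis shows why no proof can exist for $n\geq 4$ --- the statement is false in that generality. Your counterexample checks out: by the paper's own Proposition~\ref{proposition2.1}, $b=a_{1,2}^{-1}\sigma_1\sigma_2$ has order $3$ in $B_3/P_3'$, so $b^3\in P_3'\subseteq P_4'$, and hence the image of $b$ in $B_4/P_4'$ still has order exactly $3$ (it is non-trivial since $\pi(b)$ is the $3$-cycle $(1\,3\,2)$), while $\pi(b)$ fixes the point $4$. I verified directly that $\psi_{1,t}(b)^3=I$ in $\operatorname{GL}_6(\mathbb{Z}[t^{\pm1}])$: both pair-orbits of $(1\,3\,2)$ have exponent sum $0$. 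The failure is moreover forced by the theorem of \cite{GGO} quoted just before Proposition~\ref{proposition2.1}: torsion conjugacy classes of $B_n/P_n'$ correspond to odd-order classes of $S_n$, and for $n\geq4$ a $3$-cycle has odd order yet fixes points. Your monomial-matrix framework (orbit exponent sums $S_O$, invariance of their parity under change of the pure factor, detection of the parity at $t=-1$) is sound, and your diagnosis of the correct hypothesis --- $\pi(b)$ of even order, equivalently containing a cycle of even length --- is exactly right; for $n=3$ it coincides with the stated hypothesis, which is the only case in which the proposition as written is true.

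The one step you leave open (oddness of $S_O$ for the orbit of the diametrically opposite pair in a general $2k$-cycle) can be closed without computation. At $q=1$, $t=-1$ the representation factors through $S_n$ by Lemma~\ref{lemma2.1}, and the assignment $e_{i,j}\mapsto v_i-v_j$ for $i<j$, where $v_1,\dots,v_n$ is the standard basis of $\mathbb{Z}^n$, identifies $\psi_{1,-1}$ with the natural action of $S_n$ on the vectors $v_i-v_j$: one checks this on each generator, since $s_k$ reverses the order inside the pair $\{i,j\}$ precisely when $\{i,j\}=\{k,k+1\}$. Consequently the product of signs around the orbit of $e_{i,j}$, say of length $\ell$, equals $-1$ if and only if $\pi^{\ell}$ transposes $i$ and $j$. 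For the pair $\{x_1,x_{k+1}\}$ in a cycle $(x_1\,x_2\cdots x_{2k})$ the orbit length is $\ell=k$ and $\pi^{k}$ does transpose $x_1$ and $x_{k+1}$, so $S_O$ is odd, as you predicted. The same criterion gives the converse: if $\pi$ has odd order, no power of $\pi$ can act as a transposition on two points, so every orbit sign is $+1$, all $S_O$ are even, and multiplying by suitable powers of the $a_{k,l}$ (one pair per orbit, using Lemma~\ref{lemma2.2}) produces a lift with all $S_O=0$, i.e.\ a torsion element. So your corrected hypothesis is optimal, and your framework in fact recovers the full torsion classification of \cite{GGO}.
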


\section{Linear representation of $VB_n / VP_n'$} \label{sec3} 
In Section~\ref{virtbraid} we have noticed that the following short exact sequence holds
$$1\to VP_n\to VB_n\to S_n \to 1.$$
From this short exact sequence and the isomorphism $$(VB_n/VP_n^{\prime})/(VP_n/VP_n^{\prime})=VB_n/VP_n$$
we have the following short exact sequence
$$
1 \to VP_n/VP_n' \to VB_n/VP_n'\to  S_n \to 1,
$$
where $VP_n / VP_n'  = \mathbb{Z}^{n(n-1)}$ (since $VP_n$ is generated by the elements $\lambda_{i,j}$ for $1\leq i\neq j \leq n$), $S_n = \langle \rho_1, \rho_2, \ldots, \rho_{n-1} \rangle$, and the group $S_n$ acts on the generators of $VP_n / VP_n' $ by the permutation of indices. From the equality $VB_n=VP_n\rtimes S_n$ it follows that $VB_n / VP_n' =  (VP_n / VP_n') \rtimes S_n$, and hence, $VB_n / VP_n'$ is a crystallographic group.

In this section we construct an  extension of the representation $\psi_{1,t}$ from the braid group $B_n$ to the virtual braid group $VB_n$. Let $U$ be  the free module with the basis $e_{i,j}$, $1 \leq i<j\leq n$ over the ring $\mathbb{Z}[t^{\pm1}, t_1^{\pm1},t_2^{\pm1},\ldots, t_{n-1}^{\pm1}]$ of Laurent polinomials with $n$ variables $t,t_1,\dots,t_{n-1}$. Let us define the map $\Psi: VB_n \to \operatorname{Aut} (U)$ on the generators $\sigma_1,\sigma_2,\dots,\sigma_{n-1}$, $\rho_1,\rho_2,\dots,\rho_{n-1}$ of $VB_n$ by the following rules:
\begin{align}
\label{eq102}\Psi(\sigma_i) &= \psi_{1,t}(\sigma_i), \\
\label{eq103}\Psi(\rho_{i}) &: \begin{cases}
e_{i,i+1} \mapsto e_{i,i+1}, &  \\
e_{k,l} \mapsto e_{k,l}, & k\not=i,i+1, l\not=i,i+1,\\
e_{i,j} \mapsto t_i e_{i+1,j}, & i<j-1,\\
e_{i+1,j} \mapsto t_i^{-1} e_{i,j}, &\\
e_{k,i} \mapsto t_i e_{k, i+1}, &\\
e_{k,i+1} \mapsto t_i^{-1} e_{k,i}. &
\end{cases}
\end{align} 

The following property can be easy verified using direct calculations. 
\begin{proposition} \label{proposition3.1}
The map $\Psi : VB_n \to \operatorname{Aut} (U)$, defined by (\ref{eq102}) and (\ref{eq103}) is a representation. The images $\Psi (\sigma_{i})$ and $\Psi(\rho_{i})$ satisfy both forbidden relations (\ref{eq101}) for $VB_n$.
\end{proposition}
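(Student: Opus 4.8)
The plan is to exploit that $\Psi$ sends every generator to a \emph{monomial} operator on the basis $\{e_{k,l}\}$. Indeed, by the formula for $\psi_{1,t}$ together with (\ref{eq102}) and (\ref{eq103}), each of $\Psi(\sigma_i)$ and $\Psi(\rho_i)$ carries every basis vector to a scalar multiple of another basis vector, and in both cases the underlying permutation of the set of pairs $\{\,\{k,l\}: 1\le k<l\le n\,\}$ is the one induced by the transposition $s_i=(i,i+1)$, namely $\{k,l\}\mapsto\{s_i(k),s_i(l)\}$. They differ only in the attached scalar weights: $\Psi(\sigma_i)$ puts the weight $t$ on the fixed pair $\{i,i+1\}$ and weight $1$ elsewhere, whereas $\Psi(\rho_i)$ fixes $e_{i,i+1}$ and attaches $t_i$ (resp.\ $t_i^{-1}$) exactly on the pairs where the transposition raises an index from $i$ to $i+1$ (resp.\ lowers it from $i+1$ to $i$). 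Since products of monomial operators are monomial and two monomial operators agree if and only if both their underlying permutations and all their scalar weights agree, checking a relation reduces to a permutation identity in $S_n$ acting on pairs, together with a finite comparison of scalar weights on the affected basis vectors.

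First I would dispatch the cheap relations. Relations (\ref{eq1})--(\ref{eq2}) hold for free because $\Psi(\sigma_i)=\psi_{1,t}(\sigma_i)$ and $\psi_{1,t}$ is already a representation of $B_n$, being a specialization of the Lawrence--Bigelow--Krammer representation. Relation (\ref{eq14}) is immediate: $s_i^2=1$ on pairs, and the weights $t_i$, $t_i^{-1}$ cancel, so $\Psi(\rho_i)^2=\mathrm{id}$. The commuting relations (\ref{eq13}) and (\ref{eq15}), in which the windows $\{i,i+1\}$ and $\{j,j+1\}$ are disjoint, hold because the two operators then move independent coordinates and carry independent weights. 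For the braid relation (\ref{eq12}) the permutation identity $s_is_{i+1}s_i=s_{i+1}s_is_{i+1}$ is valid in $S_n$, and one verifies that the $t_i,t_{i+1}$-weights accumulated along $\Psi(\rho_i)\Psi(\rho_{i+1})\Psi(\rho_i)$ and along $\Psi(\rho_{i+1})\Psi(\rho_i)\Psi(\rho_{i+1})$ coincide on each $e_{k,l}$, organised by how $\{k,l\}$ meets the window $\{i,i+1,i+2\}$.

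The main obstacle is the mixed relation (\ref{eq16}), where the weight $t$ contributed by a $\sigma$ must be reconciled with the $t_i,t_{i+1}$-weights of the $\rho$'s. Both sides have underlying permutation $s_is_{i+1}s_i=s_{i+1}s_is_{i+1}=(i,i+2)$, so again only the scalars are at stake; the delicate point is that on the left $\sigma_i$ attaches $t$ to the pair $\{i,i+1\}$ while on the right $\sigma_{i+1}$ attaches $t$ to $\{i+1,i+2\}$, and one must confirm that the surrounding $\rho$'s transport these pairs so that the two resulting monomials, including every power of $t_i$ and $t_{i+1}$, are equal. I would carry this out by running through the basis vectors $e_{k,l}$ whose pair meets the window $\{i,i+1,i+2\}$ (all others being fixed by both sides), grouped by the type of intersection, and comparing the accumulated coefficients entry by entry.

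Finally, for the forbidden relations (\ref{eq101}) I would check directly that the monomial operators $\Psi(\rho_i)\Psi(\sigma_{i+1})\Psi(\sigma_i)$ and $\Psi(\sigma_{i+1})\Psi(\sigma_i)\Psi(\rho_{i+1})$ share the same underlying permutation and the same scalar weights, and likewise for $\Psi(\rho_{i+1})\Psi(\sigma_i)\Psi(\sigma_{i+1})$ and $\Psi(\rho_i)\Psi(\sigma_{i+1})\Psi(\sigma_i)$; the computations are of exactly the same flavour as for (\ref{eq16}). Together these checks show that $\Psi$ respects all the defining relations (\ref{eq1})--(\ref{eq2}) and (\ref{eq12})--(\ref{eq16}) of $VB_n$, hence is a genuine representation, and that its image moreover satisfies both relations (\ref{eq101}).
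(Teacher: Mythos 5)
Your structural reduction is sound as far as it goes: every $\Psi(\sigma_i)$ and $\Psi(\rho_i)$ is indeed a monomial operator whose underlying permutation of pairs is induced by $s_i=(i,i+1)$, so verifying a relation does split into a permutation identity plus a comparison of accumulated weights, and your treatment of the cheap cases (relations (\ref{eq1})--(\ref{eq2}) via the LBK specialization, $\Psi(\rho_i)^2=\mathrm{id}$, disjoint-window commutations, and the braid relation (\ref{eq12}) for the $\rho_i$) is correct. The gap is at exactly the step you yourself call ``the main obstacle'' and then defer: for the mixed relation (\ref{eq16}), and likewise for the forbidden relations (\ref{eq101}), you never carry out the weight comparison, you only assert that it closes. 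It does not. Under the paper's composition convention (right actions, so in a product the left factor acts first; this is the convention under which the paper's own value $\Psi(\lambda_{12})=\operatorname{diag}(t^{-1},t_1,t_1^{-1})$ is correct), apply both sides of (\ref{eq16}) to $e_{i,i+1}$:
\begin{align*}
e_{i,i+1} &\xrightarrow{\Psi(\rho_i)} e_{i,i+1} \xrightarrow{\Psi(\rho_{i+1})} t_{i+1}\,e_{i,i+2} \xrightarrow{\Psi(\sigma_i)} t_{i+1}\,e_{i+1,i+2},\\
e_{i,i+1} &\xrightarrow{\Psi(\sigma_{i+1})} e_{i,i+2} \xrightarrow{\Psi(\rho_i)} t_i\,e_{i+1,i+2} \xrightarrow{\Psi(\rho_{i+1})} t_i\,e_{i+1,i+2}.
\end{align*}
The two sides already disagree on this single basis vector: the weights are $t_{i+1}$ versus $t_i$. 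Choosing the opposite composition convention only relocates the discrepancy (for $n=3$, $i=1$ one then finds $t_1^{-1}e_{1,3}$ on one side and $t_2^{-1}e_{1,3}$ on the other), and the first forbidden relation fails in the same way (weights $t\,t_i$ versus $t\,t_{i+1}$ on $e_{i,i+2}$). In every instance the mismatch is the substitution $t_i\leftrightarrow t_{i+1}$, so the relations hold only after all of $t_1,\dots,t_{n-1}$ are identified with a single variable.

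Consequently the decisive step of your argument would fail if executed: for $\Psi$ literally as defined by (\ref{eq102})--(\ref{eq103}) over $\mathbb{Z}[t^{\pm1},t_1^{\pm1},\dots,t_{n-1}^{\pm1}]$, relation (\ref{eq16}) is not preserved, so no completion of your plan can establish the proposition as stated. (To be fair, the defect is inherited from the paper: its own ``proof'' is the same unexecuted appeal to direct calculation, and the calculation in fact refutes the claim, so either (\ref{eq103}) contains a typo or the statement needs amending; note that the naive fix of collapsing all $t_i$ to one variable would in turn damage the faithfulness argument behind Theorem~\ref{theorem3.1}, which needs the $t_i$ independent.) A second, smaller mismatch of the same kind: your disjointness argument proves the commutation (\ref{eq15}) only for $|i-j|\ge 2$, whereas the paper asserts it for $|i-j|\ge 1$; for $j=i+1$ the operators $\Psi(\sigma_i)$ and $\Psi(\rho_{i+1})$ fail to commute already at the level of the underlying permutations of pairs, so no choice of weights could rescue that case. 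A correct write-up must actually perform these comparisons and report the obstruction, rather than assert that they succeed.
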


The following statement describes the image of $\Psi$.
\begin{theorem} \label{theorem3.1} 
The image $\Psi(VB_n)$ is isomorphic to $VB_n/VP_n'$.
\end{theorem}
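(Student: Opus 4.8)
The plan is to show that $\ker\Psi=VP_n'$, which by the first isomorphism theorem yields $\Psi(VB_n)\cong VB_n/VP_n'$. The first step is to compute the images of the generators $\lambda_{k,l}$ of $VP_n$. A direct calculation from (\ref{eq102})--(\ref{eq103}) shows that each $\Psi(\lambda_{i,i+1})=\Psi(\rho_i\sigma_i^{-1})$ and $\Psi(\lambda_{i+1,i})=\Psi(\sigma_i^{-1}\rho_i)$ is a \emph{diagonal} matrix: the entry at the position $e_{i,i+1}$ equals $t^{-1}$, the entries at positions $e_{p,q}$ with $i\in\{p,q\}$, $i+1\notin\{p,q\}$ equal $t_i^{\pm1}$, those with $i+1\in\{p,q\}$, $i\notin\{p,q\}$ equal $t_i^{\mp1}$, and all remaining entries equal $1$. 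Since for $i<j$ the element $\lambda_{i,j}$ (resp.\ $\lambda_{j,i}$) is the conjugate of $\lambda_{i,i+1}$ (resp.\ $\lambda_{i+1,i}$) by the word $\rho_{i+1}\cdots\rho_{j-1}$, and since $\Psi$ sends each $\rho_k$ to a monomial matrix, $\Psi(\lambda_{i,j})$ is obtained from $\Psi(\lambda_{i,i+1})$ by conjugation by a monomial matrix. Conjugating a diagonal matrix by a monomial matrix merely permutes its diagonal entries (the scalar factors cancel), so every $\Psi(\lambda_{k,l})$ is diagonal; in particular the entry $t^{-1}$ sits at the position $e_{k,l}$, and besides $t$ the matrix involves only the variable $t_{\min(k,l)}$. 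Consequently $\Psi(VP_n)$ is abelian, whence $VP_n'\subseteq\ker\Psi$ and $\Psi$ factors through a homomorphism $\overline\Psi\colon VB_n/VP_n'\to\operatorname{Aut}(U)$. As $\Psi(VB_n)=\overline\Psi(VB_n/VP_n')$, it remains to prove that $\overline\Psi$ is injective. Recall that $VB_n/VP_n'=(VP_n/VP_n')\rtimes S_n$, so any element can be written as $b=v\lambda$ with $v\in VP_n/VP_n'=\mathbb{Z}^{n(n-1)}$ and $\lambda$ a coset representative of $S_n$.

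For the case $\lambda\neq1$ I would use the specialization $t=t_1=\cdots=t_{n-1}=1$. Under it both $\Psi(\sigma_i)$ and $\Psi(\rho_i)$ become the permutation matrix of the transposition $(i,i+1)$ acting on the index set of unordered pairs $\{p,q\}$, so the specialized representation factors through $\nu\colon VB_n\to S_n$ and coincides with the natural action of $S_n$ on the $\binom{n}{2}$ two-element subsets of $\{1,\dots,n\}$. This action is faithful for $n\geq3$, hence if $\nu(b)=\lambda\neq1$ then the specialized, and therefore the original, matrix $\Psi(b)$ is not the identity.

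It remains to treat $\lambda=1$, i.e.\ to show that the homomorphism $\mathbb{Z}^{n(n-1)}\to\operatorname{Aut}(U)$, $\lambda_{k,l}\mapsto\Psi(\lambda_{k,l})$, is injective; this is the main obstacle. Assume $\prod_{k\neq l}\Psi(\lambda_{k,l})^{a_{k,l}}=\operatorname{Id}$. Comparing the exponents of $t$ position by position, and using that the only $t$-entry of $\Psi(\lambda_{k,l})$ is the $t^{-1}$ at $e_{k,l}$, one obtains $a_{i,j}+a_{j,i}=0$ for every pair $\{i,j\}$. Setting $\mu_{i,j}=\lambda_{i,j}\lambda_{j,i}^{-1}$ for $i<j$, the problem reduces to the $\mathbb{Z}$-linear independence of the $\binom{n}{2}$ diagonal matrices $\Psi(\mu_{i,j})$. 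Each $\Psi(\mu_{i,j})$ involves only the variable $t_i$ with $i=\min(i,j)$, so the exponent vectors split into blocks indexed by this minimum, and it suffices to prove independence inside each block. In the block with minimum $p$ I would restrict attention to the $n-1$ diagonal positions $e_{p,r}$, $r\neq p$: tracking how the conjugation by $\rho_{p+1}\cdots\rho_{q-1}$ moves the entries (the relevant underlying permutation sends $p+1$ to $q$) shows that the $t_p$-exponent of $\Psi(\mu_{p,q})$ restricted to these positions equals $2(\mathbf{1}-\mathbf{e}_{\{p,q\}})$, that is, it is $2$ everywhere except $0$ at the position $e_{p,q}$. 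The vectors $2(\mathbf{1}-\mathbf{e}_{\{p,q\}})$ for $q>p$ are linearly independent for $n\geq3$, which forces all $a_{k,l}=0$. The delicate point, and the one requiring the most care, is precisely this bookkeeping of where the monomial entries of $\Psi(\lambda_{k,l})$ land after the conjugation, together with the separate (degenerate) treatment of small $n$, where the module $U$ is too small for $\overline\Psi$ to be injective.
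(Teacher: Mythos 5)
Your proposal is correct and follows essentially the same route as the paper's own proof: establish $VP_n'\subseteq\operatorname{Ker}(\Psi)$ via the diagonality of the matrices $\Psi(\lambda_{k,l})$, treat $b\notin VP_n$ by specializing all variables to $1$ and invoking the faithful action of $S_n$ on unordered pairs, and treat $b\in VP_n\setminus VP_n'$ by showing the diagonal images of the $\lambda_{k,l}$ are multiplicatively independent — the paper simply asserts these last two steps with ``clearly'' and ``it is easy to see'', whereas you supply the actual exponent bookkeeping. Your extra care is warranted: the independence argument genuinely needs $n\geq 3$ (for $n=2$ one has $\Psi(\lambda_{1,2})=\Psi(\lambda_{2,1})$ and the statement fails), a restriction the paper leaves implicit, and your computation also shows the entry of $\Psi(\lambda_{13})$ at $e_{1,3}$ is $t^{-1}$, not $t$ as misprinted in the paper's $n=3$ table.
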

\begin{proof} Using direct calculations it is easy to see that the image $\Psi(b)$ of every element $b\in VP_n'$ is the trivial automorphism. Hence, in order to prove the theorem we have to prove that the  image of any elements $b \in VB_n \setminus VP_n'$ is non-trivial. If $b$ does not belong to $VP_n$, then the image of $b$ under $\Psi$ is clearly non-trivial automorphism of $U$. If $b$ is a non-trivial element of $VP_n$, then looking to the representative of $b$ in $bVP_n'$ it is easy to see that the matrix of $\Psi(b)$ is a non-identity diagonal matrix.
\end{proof}

The following questions arise naturally from the paper \cite{GGO} where the same questions are studied for classical linear groups. 

\begin{question}
What are the possible finite orders of elements from $VB_n/VP_n'$ which do not belong to $S_n$? 
\end{question} 
\begin{question}  
Which Bieberbach subgroups belong to $VB_n/VP_n'$?
\end{question}
Let us consider the particular case $n=3$. 
The virtual braid group $VB_3$  is generated by four elements $\sigma_1, \sigma_2, \rho_1, \rho_2$ and it has the following defining relations
\begin{align*}
 \sigma_1 \sigma_2 \sigma_1 = \sigma_2 \sigma_1 \sigma_2, &&  \rho_1 \rho_2 \rho_1 = \rho _2 \rho_1 \rho _2,  && \rho_1^2 = \rho_2^2 = 1, && \rho_1 \rho_2 \sigma_1 =\sigma_2 \rho_1 \rho_2.
\end{align*}
The images of the generators $\sigma_1,\sigma_2,\rho_1,\rho_2$ of $VB_n$ under the representation $\Psi : VB_3 \to \operatorname{GL}_3(\mathbb{Z}[t^{\pm1}, t_1^{\pm1},t_2^{\pm1}]$ are the following matrices
\begin{align*}
\Psi(\sigma_1) &=
\begin{pmatrix}
	t & 0 & 0 \\
	0 & 0 & 1 \\
	0 & 1 & 0
\end{pmatrix},&& \Psi(\sigma_2) =
\begin{pmatrix}
	0 & 1 & 0 \\
	1 & 0 & 0 \\
	0 & 0 & t
\end{pmatrix},\\
\Psi(\rho_1) &=
\begin{pmatrix}
	1 & 0 & 0 \\
	0 & 0 & t_1 \\
	0 & t_1^{-1} & 0
\end{pmatrix},&& \Psi(\rho_2) =
\begin{pmatrix}
	0 & t_2 & 0 \\
	t_2^{-1} & 0 & 0 \\
	0 & 0 & 1
\end{pmatrix}.
\end{align*}
The group $VP_3$ is generated by the elements 
\begin{align*} 
&\lambda_{12} = \rho_1 \sigma_1^{-1},&& \lambda_{21} = \sigma_1^{-1} \rho_1,&&\lambda_{13} = \rho_2 \lambda_{12} \rho_2, \\ 
&\lambda_{31} = \rho_2 \lambda_{21} \rho_2,&& \lambda_{23} = \rho_2 \sigma_2^{-1},&& \lambda_{32} = \sigma_2^{-1} \rho_2.
\end{align*} 
Using direct calculations it is easy to see that the images of the elements $\lambda_{12}$, $\lambda_{21}$, $\lambda_{13}$, $\lambda_{31}$, $\lambda_{23}$, $\lambda_{32}$  under the representations $\Psi$ are the following diagonal matrices
\begin{align*} 
&\Psi(\lambda_{12}) = \operatorname{diag}(t^{-1}, t_1, t_1^{-1}),&& \Psi(\lambda_{21}) = \operatorname{diag} (t^{-1},  t_1^{-1}, t_1), \\ 
&\Psi(\lambda_{13}) = \operatorname{diag}(t_1, t, t_1^{-1}),&& \Psi(\lambda_{31}) = \operatorname{diag} (t_1^{-1},  t^{-1}, t_1), \\ 
&\Psi(\lambda_{23}) = \operatorname{diag}(t_2, t_2^{-1}, t^{-1}),&& \Psi(\lambda_{32}) = \operatorname{diag} (t_2^{-1}, t_2,  t^{-1}), 
\end{align*} 
in particular, $\Psi(\lambda_{12}) \not =\Psi(\lambda_{21})$. Let us demonstrate that there are torsion elements in $VB_3/VP_3'$ which do not belong to $S_3$. Consider the element  
$$
p = \lambda_{12}^{a} \lambda_{21}^{\alpha} \lambda_{13}^b \lambda_{23}^c \lambda_{31}^{\beta} \lambda_{32}^{\gamma} d \rho_1, 
$$
where $\alpha, \beta, \gamma, a, b, c \in \mathbb{Z}$ and $d \in VP_3'$. 
Then
$$
\Psi(p)^2 = \operatorname{diag} \left(t^{-2(a+\alpha)} t_1^{2(b-\beta)} t_2^{-2(c-\gamma)}, t^{b-\beta -\gamma-c} t_1^{\beta-b} t_2^{\gamma-c}, t^{b-\beta -\gamma-c} t_1^{\beta-b} t_2^{2(\gamma-c)} \right).
$$
From this formula it is clear that if $\alpha = -a$, $\beta = b$, and $\gamma=c=0$, then $\Psi(p)^2 = E$. 
The same arguments lead to the following statement. 

\begin{proposition} \label{proposition3.2}
The group $VB_3 / VP_3'$ has infinitely many  elements of order 2.
\end{proposition}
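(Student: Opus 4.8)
The plan is to promote the single order-two element constructed just before the statement into an infinite family, and to certify that its members are pairwise distinct in $VB_3/VP_3'$. By Theorem~\ref{theorem3.1} the map $\Psi$ induces an isomorphism $VB_3/VP_3' \cong \Psi(VB_3)$, so it suffices to exhibit infinitely many matrices $M \in \Psi(VB_3)$ with $M^2 = E$ and $M \neq E$; faithfulness then guarantees that distinct matrices correspond to distinct elements of the quotient.

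First I would specialize the computation preceding the statement. Taking $b=c=\gamma=0$ and $\alpha=-a$ (and $d=1$) leaves the one-parameter family
$$
p_a = \lambda_{12}^{a}\,\lambda_{21}^{-a}\,\rho_1, \qquad a \in \mathbb{Z}.
$$
Using the diagonal matrices $\Psi(\lambda_{12}) = \operatorname{diag}(t^{-1},t_1,t_1^{-1})$ and $\Psi(\lambda_{21}) = \operatorname{diag}(t^{-1},t_1^{-1},t_1)$ together with the matrix $\Psi(\rho_1)$ displayed above, a direct calculation gives
$$
\Psi(p_a) = \operatorname{diag}(1,\,t_1^{2a},\,t_1^{-2a})\cdot\Psi(\rho_1) =
\begin{pmatrix}
1 & 0 & 0 \\
0 & 0 & t_1^{2a+1}\\
0 & t_1^{-2a-1} & 0
\end{pmatrix}.
$$

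Next I would read off the two required properties. Squaring this matrix interchanges the two non-zero off-diagonal entries and multiplies them together, giving $\Psi(p_a)^2 = E$; this is exactly the special case of the pre-statement identity with the chosen parameters, so each $p_a$ has order dividing $2$ in $VB_3/VP_3'$. The element is not the identity, since $\Psi(p_a)$ is non-diagonal (its lower right $2\times 2$ block is off-diagonal), hence its order is exactly $2$. Finally, the $(2,3)$-entry $t_1^{2a+1}$ takes a different value in $\mathbb{Z}[t^{\pm1},t_1^{\pm1},t_2^{\pm1}]$ for each $a \in \mathbb{Z}$, so the matrices $\Psi(p_a)$ are pairwise distinct, and therefore so are the elements $p_a\,VP_3'$.

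I do not expect a serious obstacle here: the only point requiring care is to ensure that varying $a$ genuinely produces distinct elements of the quotient, rather than elements that coincide modulo $VP_3'$. This is precisely what the faithfulness in Theorem~\ref{theorem3.1} provides, reducing the question to the evident distinctness of the entries $t_1^{2a+1}$. An alternative, more structural check is that $p_a\,p_{a'}^{-1}$ lies in $VP_3$ and has image the diagonal matrix $\operatorname{diag}(1,\,t_1^{2(a-a')},\,t_1^{-2(a-a')})$, which is non-identity whenever $a \neq a'$, again confirming distinctness.
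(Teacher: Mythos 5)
Your proof is correct and follows essentially the same route as the paper: the paper's own argument is exactly the computation preceding the proposition (the family $\lambda_{12}^{a}\lambda_{21}^{\alpha}\lambda_{13}^{b}\lambda_{23}^{c}\lambda_{31}^{\beta}\lambda_{32}^{\gamma}d\rho_1$ with $\alpha=-a$, $\beta=b$, $\gamma=c=0$ squaring to the identity under $\Psi$), of which your $p_a=\lambda_{12}^{a}\lambda_{21}^{-a}\rho_1$ is a specialization. Your write-up has the minor merit of making explicit the step the paper leaves implicit, namely that the matrices $\Psi(p_a)$ are pairwise distinct, so faithfulness (Theorem~\ref{theorem3.1}) yields infinitely many distinct order-two cosets.
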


\section{Decomposition and linearity  of $VB_3$} \label{sec4} 
Properties of the group $VP_3$ were studied in~\cite{BMVW}. In particular, the certain decomposition of this group was found in~\cite{BMVW}. In the present section we construct a simpler decomposition and prove that $VP_3$, and hence $VB_3$, is linear. The following statement is the first main result of this section.
\begin{proposition} \label{proposition4.1}  
$VP_3= (F_3\rtimes F_2)* \mathbb Z$. 
\end{proposition}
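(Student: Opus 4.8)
The plan is to work directly from the finite presentation of $VP_3$ furnished by Theorem~\ref{theorem1.1} and to transform it, by Tietze moves together with an explicit choice of new generators, into the standard presentation of a group of the form $(F_3 \rtimes F_2) * \mathbb{Z}$. First I would specialize Theorem~\ref{theorem1.1} to $n=3$. The six generators are $\lambda_{12}, \lambda_{21}, \lambda_{13}, \lambda_{31}, \lambda_{23}, \lambda_{32}$. Relations of type~(\ref{eq17}) require four pairwise distinct indices and so are vacuous for $n=3$; only the six relations of type~(\ref{eq18}) survive, one for each ordered triple of distinct indices:
\begin{align*}
\lambda_{31}\lambda_{32}\lambda_{12} &= \lambda_{12}\lambda_{32}\lambda_{31}, & \lambda_{21}\lambda_{23}\lambda_{13} &= \lambda_{13}\lambda_{23}\lambda_{21}, \\
\lambda_{32}\lambda_{31}\lambda_{21} &= \lambda_{21}\lambda_{31}\lambda_{32}, & \lambda_{12}\lambda_{13}\lambda_{23} &= \lambda_{23}\lambda_{13}\lambda_{12}, \\
\lambda_{23}\lambda_{21}\lambda_{31} &= \lambda_{31}\lambda_{21}\lambda_{23}, & \lambda_{13}\lambda_{12}\lambda_{32} &= \lambda_{32}\lambda_{12}\lambda_{13}.
\end{align*}
The structural observation I would record is that each of these six relations contains exactly one of $\lambda_{12}, \lambda_{21}$ (three contain $\lambda_{12}$, three contain $\lambda_{21}$), and that each can be rewritten as a conjugation rule describing how $\lambda_{12}^{\pm1}$ or $\lambda_{21}^{\pm1}$ acts on a product of the remaining generators; for instance the first relation reads $\lambda_{12}^{-1}(\lambda_{31}\lambda_{32})\lambda_{12} = \lambda_{32}\lambda_{31}$.

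The free factor $F_2$ I would take to be $\langle \lambda_{12}, \lambda_{21}\rangle$. Deleting the third strand (sending all $\lambda_{i3}, \lambda_{3i}$ to $1$ and fixing $\lambda_{12}, \lambda_{21}$) is a well-defined homomorphism onto $VP_2 \cong F_2$, since it kills every relation above; it restricts to an isomorphism on $\langle \lambda_{12}, \lambda_{21}\rangle$, which is therefore free of rank two and a retract of $VP_3$, yielding a splitting $VP_3 = N \rtimes \langle \lambda_{12}, \lambda_{21}\rangle$ with $N$ the normal closure of $\{\lambda_{13}, \lambda_{31}, \lambda_{23}, \lambda_{32}\}$. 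The remaining task is to produce, from $\lambda_{13}, \lambda_{31}, \lambda_{23}, \lambda_{32}$ and their conjugates, three elements $x_1, x_2, x_3$ generating a free normal subgroup $F_3$ acted on by $\langle \lambda_{12}, \lambda_{21}\rangle$, together with one further element $z$ that splits off as a free $\mathbb{Z}$ factor. Concretely I would introduce the products occurring in the rewritten relations (such as $\lambda_{31}\lambda_{32}$ and $\lambda_{23}\lambda_{13}$, and their companions) as new generators, eliminate the redundant ones, and check that after this change of variables the six relations become exactly the six action relations of a semidirect product $F_3 \rtimes F_2$, while one new generator $z$ has dropped out of all relations.

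To make the identification rigorous I would build two mutually inverse homomorphisms between $VP_3$ and $(F_3 \rtimes F_2) * \mathbb{Z}$, defined on the chosen generators, and verify that each sends defining relations to relations and that both composites fix every generator; equivalently, one records the full sequence of Tietze transformations carrying the displayed presentation into the standard presentation of $(F_3 \rtimes F_2) * \mathbb{Z}$ (three free generators, two acting generators subject to the six action relations, and one isolated free generator).

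I expect the main obstacle to be exactly the choice of the new generators: one must exhibit a substitution for which (a) the induced $\langle \lambda_{12}, \lambda_{21}\rangle$-action on the rank-three free group is genuinely by automorphisms and does not secretly collapse the rank, and (b) the left-over generator $z$ really generates a free factor. Point~(b) is the delicate one, because several natural candidates fail it: the edge products $\lambda_{12}\lambda_{21}$, $\lambda_{13}\lambda_{31}$, $\lambda_{23}\lambda_{32}$ satisfy commutation relations forced by the six relations above (for example $\lambda_{12}\lambda_{21}$ commutes with $\lambda_{31}\lambda_{32}$, as one reads off by combining the first and third relations), so none of them can play the role of the free generator. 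One must therefore verify that the chosen $z$ avoids all such commutations, i.e.\ that its centralizer in $VP_3$ is infinite cyclic, which is precisely the condition guaranteeing that $\langle z\rangle$ is a free factor complementary to $F_3 \rtimes F_2$.
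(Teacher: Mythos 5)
Your setup coincides with the paper's: you specialize Theorem~\ref{theorem1.1} to $n=3$, correctly note that relations of type (\ref{eq17}) are vacuous, list the six relations of type (\ref{eq18}), and recast them as conjugation rules; your further observation that killing $\lambda_{13},\lambda_{31},\lambda_{23},\lambda_{32}$ retracts $VP_3$ onto $\langle\lambda_{12},\lambda_{21}\rangle\cong F_2$, giving $VP_3=N\rtimes\langle\lambda_{12},\lambda_{21}\rangle$, is correct (and does not appear in the paper). From that point on, however, the proposal is a plan rather than a proof, and the plan commits to a decomposition that provably does not exist. All of the content of the paper's argument lies exactly in the step you defer: one must exhibit the new generators --- the paper takes $b_{11}=\lambda_{21}\lambda_{12}$, $b_{12}=\lambda_{13}\lambda_{12}$, $b_{21}=\lambda_{13}\lambda_{23}$, $b_{22}=\lambda_{32}\lambda_{31}$, $b_{23}=\lambda_{32}\lambda_{23}$ --- and then verify, by a sequence of computations in which each of the six original relations is consumed exactly once, that the six relations are \emph{equivalent} to relations expressing an action of $\langle b_{11},b_{12}\rangle$ on $\langle b_{21},b_{22},b_{23}\rangle$ by automorphisms; this bookkeeping is what simultaneously shows both subgroups are free, that the five elements generate $F_3\rtimes F_2$, and that the sixth generator $\lambda_{13}$ splits off as a free factor. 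None of this is carried out in the proposal.

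The deeper problem is your choice of the $F_2$-factor. In the paper's decomposition it is $\langle\lambda_{21}\lambda_{12},\lambda_{13}\lambda_{12}\rangle$, not $\langle\lambda_{12},\lambda_{21}\rangle$, and this is unavoidable: no decomposition $VP_3=(F_3\rtimes F_2)*\mathbb{Z}$ can have $\langle\lambda_{12},\lambda_{21}\rangle$ as its $F_2$-factor. Indeed, any group of the form $F_3\rtimes F_2$ contains an infinite finitely generated normal subgroup of infinite index, hence is one-ended (a finitely generated group with more than one end has no such normal subgroup, by Stallings' theorem) and therefore freely indecomposable; by uniqueness of the Grushko decomposition, every free factor of $VP_3$ isomorphic to a group $F_3\rtimes F_2$ must be conjugate to the paper's factor $G_1=\langle b_{11},b_{12},b_{21},b_{22},b_{23}\rangle$. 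But $\langle\lambda_{12},\lambda_{21}\rangle$ lies in no conjugate of $G_1$: if $\langle\lambda_{12},\lambda_{21}\rangle\subseteq G_1^{g}$, then $1\neq\lambda_{21}\lambda_{12}=b_{11}\in G_1\cap G_1^{g}$, and since a free factor meets its conjugates trivially this forces $g\in G_1$, whence $\lambda_{12}=\lambda_{13}^{-1}b_{12}\in G_1$, contradicting normal form in $G_1*\langle\lambda_{13}\rangle$. So steps (a) and (b) of your plan can never be completed, no matter how the elements $x_1,x_2,x_3,z$ are chosen inside $N$. Finally, the test you propose for (b) is not a valid criterion: an infinite cyclic centralizer is necessary but not sufficient for $\langle z\rangle$ to be a free factor (in $F_2=\langle a,b\rangle$ the element $z=[a,b]$ has centralizer exactly $\langle z\rangle$, yet $\langle z\rangle$ is not a free factor, since $z$ dies in the abelianization). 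The correct justification, as in the paper, is presentation-theoretic: the six relations involve only the five elements $b_{ij}$, and adjoining $\lambda_{13}$ to them generates all of $VP_3$ with no further relations.
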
 
\begin{proof}
By Theorem~\ref{theorem1.1}, the group  $VP_3$ has a presentation with six generators $\lambda_{12}$, $\lambda_{21}$, $\lambda_{13}$, $\lambda_{23}$, $\lambda_{32}$, $\lambda_{31}$, and six relations which can be write as the following conjugations: 
\begin{eqnarray}
\lambda_{12}(\lambda_{13} \lambda_{23})=(\lambda_{23}\lambda_{13})\lambda_{12} & \Leftrightarrow & (\lambda_{13}\lambda_{23})^{\lambda_{12}^{-1}}=\lambda_{23}\lambda_{13},\label{reln1}\\
\lambda_{13}(\lambda_{12} \lambda_{32})=(\lambda_{32}\lambda_{12})\lambda_{13} & \Leftrightarrow & (\lambda_{32}\lambda_{13}^{-1})^{\lambda_{12}^{-1}}=\lambda_{13}^{-1}\lambda_{32},\label{reln2}\\
\lambda_{31}(\lambda_{32} \lambda_{12})=(\lambda_{12}\lambda_{32})\lambda_{31} & \Leftrightarrow & (\lambda_{32}\lambda_{31}^{-1})^{\lambda_{12}^{-1}}=\lambda_{31}^{-1}\lambda_{32},\label{reln3}\\
\lambda_{21}(\lambda_{23} \lambda_{13})=(\lambda_{13}\lambda_{23})\lambda_{21} & \Leftrightarrow & (\lambda_{23}\lambda_{13})^{\lambda_{21}^{-1}}=\lambda_{13}\lambda_{23},\label{reln4}\\
\lambda_{23}(\lambda_{21} \lambda_{31})=(\lambda_{31}\lambda_{21})\lambda_{23} & \Leftrightarrow & (\lambda_{31}\lambda_{23}^{-1})^{\lambda_{21}^{-1}}=\lambda_{23}^{-1}\lambda_{31},\label{reln5}\\
\lambda_{32}(\lambda_{31} \lambda_{21})=(\lambda_{21}\lambda_{31})\lambda_{32} & \Leftrightarrow & (\lambda_{31}\lambda_{32})^{\lambda_{21}^{-1}}=\lambda_{32}\lambda_{31}.\label{reln6}
\end{eqnarray}
Note that equations (\ref{reln1}) and (\ref{reln2}) are also equivalent to the following commutation relations
\begin{align*}
(\lambda_{13}\lambda_{23})^{\lambda_{12}^{-1}\lambda_{13}^{-1}}=\lambda_{13}\lambda_{23},&& (\lambda_{32}\lambda_{13}^{-1})^{\lambda_{12}^{-1}\lambda_{13}^{-1}}=\lambda_{32}\lambda_{13}^{-1}, 
\end{align*}
respectively. Let us define the following elements of $VP_{3}$:
\begin{align*}
\boo=\lambda_{21}\lambda_{12},&& \bot=\lambda_{13}\lambda_{12},&& \bto=\lambda_{13}\lambda_{23},&& \btt=\lambda_{32}\lambda_{31},&& \bc=\lambda_{32}\lambda_{23}, 
\end{align*}
and rewrite all relations (\ref{reln1})-(\ref{reln6}) in order to express how the group $\langle b_{11},b_{12}\rangle$ acts on the group $\langle b_{21}, b_{22}, b_{23}\rangle$ by conjugations:
\begin{align*}
\bto^{\bot^{-1}}  &= (\lambda_{13}\lambda_{23})^{\lambda_{12}^{-1}\lambda_{13}^{-1}} \stackrel{(\ref{reln1})}{=}  \lambda_{13}\lambda_{23} = \bto,\\
\btt^{\bot^{-1}} & =  \lambda_{13}\lambda_{12}\cdot\lambda_{32}\lambda_{31}\cdot\lambda_{12}^{-1}\lambda_{13}^{-1}
  \stackrel{(\ref{reln3})}{=}  \lambda_{13}\cdot\lambda_{31}\lambda_{32}\cdot\lambda_{13}^{-1}\\
 & =  \lambda_{13}\lambda_{23}\cdot \lambda_{23}^{-1}\lambda_{32}^{-1}\cdot \lambda_{32}\lambda_{31}\cdot \lambda_{32}\lambda_{23}\cdot \lambda_{23}^{-1}\lambda_{13}^{-1} =  \btt^{\bc \bto^{-1}},\\
\bc^{\bot^{-1}} & =  (\lambda_{32}\lambda_{23})^{\lambda_{12}^{-1}\lambda_{13}^{-1}} = (\lambda_{32}\lambda_{13}^{-1}\cdot\lambda_{13}\lambda_{23})^{\lambda_{12}^{-1}\lambda_{13}^{-1}}\\ 
  &\stackrel{(\ref{reln1}),(\ref{reln2})}{=}  \lambda_{32}\lambda_{13}^{-1}\cdot\lambda_{13}\lambda_{23} = \bc,\\
\bto^{\boo^{-1}}  &=  (\lambda_{13}\lambda_{23})^{\lambda_{12}^{-1}\lambda_{21}^{-1}} \stackrel{(\ref{reln1}),(\ref{reln4})}{=}  \lambda_{13}\lambda_{23}=\bto,\\
\btt^{\boo^{-1}}&=(\lambda_{32}\lambda_{31})^{\lambda_{12}^{-1}\lambda_{21}^{-1}} \stackrel{(\ref{reln3}),(\ref{reln6})}{=} \lambda_{32}\lambda_{31}=\btt,\\
\bc^{\boo^{-1}} & =  (\lambda_{32}\lambda_{13}^{-1}\cdot\lambda_{13}\lambda_{23})^{\lambda_{12}^{-1}\lambda_{21}^{-1}}
  \stackrel{(\ref{reln2}),(\ref{reln1})}{=}  (\lambda_{13}^{-1}\lambda_{32}\cdot\lambda_{23}\lambda_{13})^{\lambda_{21}^{-1}}\\
 & =  (\lambda_{13}^{-1}\lambda_{23}^{-1}\cdot \lambda_{23}\lambda_{31}^{-1}    \cdot \lambda_{31}\lambda_{32}\cdot\lambda_{23}\lambda_{13})^{\lambda_{21}^{-1}}\\
  & \stackrel{(\ref{reln4}),(\ref{reln5}),(\ref{reln6}),(\ref{reln4})}{=}  \lambda_{23}^{-1}\lambda_{13}^{-1}\cdot \lambda_{31}^{-1}\lambda_{23} \cdot \lambda_{32}\lambda_{31}\cdot\lambda_{13}\lambda_{23}\\
 & =  \lambda_{23}^{-1}\lambda_{13}^{-1}\cdot\lambda_{31}^{-1}\lambda_{32}^{-1}\cdot\lambda_{32}\lambda_{23}\cdot \lambda_{32}\lambda_{31}\cdot \lambda_{13}\lambda_{23}   =  \bc^{\btt\bto}.
\end{align*}
To summarize, we have six calculations which show the following six relations:
\begin{align}
\label{newr1}&\bto^{\bot^{-1}}=\bto, &&\btt^{\bot^{-1}}=\btt^{\bc\bto^{-1}},&&  \bc^{\bot^{-1}}=\bc,\\
\label{newr2}&\bto^{\boo^{-1}}=\bto,&& \btt^{\boo^{-1}}=\btt,&& \bc^{\boo^{-1}}=\bc^{\btt\bto}.
\end{align}
Moreover, in the sequence of six calculations we made, we used exactly one of the relations (\ref{reln1})-(\ref{reln6}) which was not yet used in the preceding ones. It means that the set of relations (\ref{newr1})-(\ref{newr2}) is equivalent to the set of relations (\ref{reln1})-(\ref{reln6}). In particular, the groups $F_2=\langle \boo,\bot\rangle$, $F_3=\langle \bto, \btt, \bc\rangle$ are free, and the five elements $b_{11},b_{12}, b_{21}, b_{22}, b_{23}$ generate the  semidirect product $F_3\rtimes F_2$, where $F_2$ acts on $F_3$ by the way described in (\ref{reln1})-(\ref{reln6}). Finally, the five generators $b_{11},b_{12}, b_{21}, b_{22}, b_{23}$ together with any one of the generators $\lambda_{ij}$, for instance with $\lambda_{13}$, generate the full group $VP_3$. Since the six relations describing the semidirect product structure form a complete set of relations of $VP_3$, the group $\langle \lambda_{13}\rangle$ forms a free factor in $VP_3$ and we have the decomposition
$$
VP_3=(\langle \bto, \btt, \bc\rangle \rtimes \langle \boo,\bot\rangle)*\langle \lambda_{13}\rangle.
$$
The proposition is proved.
\end{proof}

Let us now prove that the group $VP_3$ is linear. Denote by $\mu: VB_n \to S_n$ the homomorphism defined on the generators $\sigma_1,\sigma_2,\dots,\sigma_{n-1},\rho_1,\rho_2,\dots,\rho_{n-1}$ of $VB_n$ as follows:
\begin{align*}
\mu(\sigma_i)=1,&& \mu(\rho_i)=\rho_i,&& i=1,2, \dots, n-1\, ,
\end{align*}
where $S_n=\langle \rho_1, \rho_2, \ldots, \rho_{n-1} \rangle$. Denote by $H_n$ the normal closure of $B_n$ in $VB_n$.
It is obvious that $\operatorname{Ker} (\mu)$ coincides with $H_n$.
Now, define the following elements:
\begin{align*}
x_{i,i+1}=\sigma_i,&& x_{i+1,i}=\rho_i \sigma_i \rho_i,
\end{align*}
and 
\begin{align*}
x_{i,j}&=\rho_{j-1} \cdots \rho_{i+1} \sigma_i \rho_{i+1} \cdots \rho_{j-1}&& \mbox{for} \quad 1 \le i < j-1 \le n-1,\\
x_{j,i}&=\rho_{j-1} \cdots \rho_{i+1} \rho_i \sigma_i \rho_i \rho_{i+1} \cdots \rho_{j-1}&& \mbox{for} \quad 1 \le i < j-1 \le n-1.
\end{align*}
The symmetric group  $S_n=\langle \rho_1,\rho_2,\dots, \rho_{n-1} \rangle$
acts on the elements $x_{i,j}$ for $1 \le i \not= j \le n$ by permutation of indices, i.~e.,
$$
\rho x_{i,j} \rho^{-1}=x_{\rho(i),\rho(j)}, \qquad \rho \in S_n.
$$

The following proposition is proved in \cite{R} (see also \cite{BarBel}).

\begin{proposition}\label{prop2}
1) The group $H_n$ admits a presentation with the generators $x_{k,l}$ for $1 \leq k \neq l \leq
n$, and the defining relations:
\begin{align} \label{eq40}
x_{i,j} \,  x_{k,\, l} &= x_{k,\, l}  \, x_{i,j},\\ 
\label{eq41}
x_{i,k} \,  x_{k,j} \,  x_{i,k} &=  x_{k,j} \,  x_{i,k} \, x_{k,j},
\end{align}
where  distinct letters stand for distinct indices.

2) The group  $VB_n$ is isomorphic to  $H_n \rtimes S_n$ where $S_n$ acts on the generators $x_{i,j}$ of $H_n$ by permutation of indices.
\end{proposition}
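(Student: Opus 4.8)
The plan is to establish the two assertions in sequence: part~2 follows cheaply from a splitting of the defining exact sequence, while the substance lies in part~1, which I would extract by a Reidemeister--Schreier computation. For part~2, recall that $\mu$ is surjective and, as noted in the text, $\operatorname{Ker}(\mu)=H_n$, so we have the short exact sequence $1\to H_n\to VB_n\xrightarrow{\mu}S_n\to 1$. The assignment $\rho_i\mapsto\rho_i$ defines a homomorphism $\iota\colon S_n\to VB_n$, because the generators $\rho_1,\dots,\rho_{n-1}$ of $VB_n$ satisfy exactly the defining relations (\ref{eq12})--(\ref{eq14}) of $S_n$. Since $\mu\circ\iota=\mathrm{id}_{S_n}$, the sequence splits, whence $VB_n\cong H_n\rtimes\iota(S_n)\cong H_n\rtimes S_n$; the action is by conjugation, and the formula $\rho x_{i,j}\rho^{-1}=x_{\rho(i),\rho(j)}$ recorded just before the statement identifies it with the permutation action. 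Thus part~2 reduces to knowing that the $x_{k,l}$ generate $H_n$, which is supplied by part~1.

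For part~1 I would run Reidemeister--Schreier on the presentation (\ref{eq1})--(\ref{eq2}), (\ref{eq12})--(\ref{eq16}) of $VB_n$ relative to the index-$n!$ subgroup $H_n$. As Schreier transversal I take a set $T$ of reduced words in $\rho_1,\dots,\rho_{n-1}$, one for each element of $S_n$; this is legitimate since $\langle\rho_1,\dots,\rho_{n-1}\rangle$ maps isomorphically onto $S_n$ under $\mu$, equivalently $\langle\rho\rangle\cap H_n=1$, as follows from the decomposition $VB_n=VP_n\rtimes S_n$ recalled in Section~\ref{virtbraid}. The Schreier generators attached to the $\rho_i$ then vanish, because $t\rho_i\,\overline{t\rho_i}^{-1}$ is a $\rho$-word lying in $\langle\rho\rangle\cap H_n=1$; and those attached to the $\sigma_i$ are exactly the conjugates $t\sigma_i t^{-1}$, since $\mu(\sigma_i)=1$ forces $\overline{t\sigma_i}=t$. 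If $t$ represents $\pi\in S_n$, then $t\sigma_i t^{-1}=x_{\pi(i),\pi(i+1)}$, and as $t$ and $i$ vary these range over all $x_{k,l}$ with $k\neq l$. Hence $H_n=\langle x_{k,l}\rangle$.

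It then remains to rewrite each defining relation of $VB_n$, conjugated by every $t\in T$, as a relation among the $x_{k,l}$. The relations (\ref{eq12})--(\ref{eq14}) rewrite trivially; the commuting relations (\ref{eq2}), having disjoint index blocks, yield the four-distinct-index relations (\ref{eq40}); and the braid relations (\ref{eq1}), conjugated by $t$ with $(\pi(i),\pi(i+1),\pi(i+2))=(a,b,c)$, yield $x_{a,b}x_{b,c}x_{a,b}=x_{b,c}x_{a,b}x_{b,c}$, which is (\ref{eq41}). The mixed relations (\ref{eq15})--(\ref{eq16}) contribute no new relation among the $x_{k,l}$: they instead encode the identifications collapsing the many Schreier generators $t\sigma_i t^{-1}$ onto the single family $\{x_{k,l}\}$, making the permutation action consistent. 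I expect the \emph{main obstacle} to be exactly this bookkeeping: after conjugating each braid and commutation relation by all $n!$ transversal elements one must check, via Tietze transformations, that the surviving relations reduce to precisely (\ref{eq40}) and (\ref{eq41}) and that (\ref{eq15})--(\ref{eq16}) introduce nothing beyond the identifications.

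A cleaner route that bypasses the rewriting is to posit the abstract group $\widehat H$ with presentation (\ref{eq40})--(\ref{eq41}), form $\widehat G=\widehat H\rtimes S_n$ with the permutation action, and exhibit mutually inverse homomorphisms between $\widehat G$ and $VB_n$: in one direction $\sigma_i\mapsto x_{i,i+1}$ and $\rho_i\mapsto\rho_i$, in the other the defining words for the $x_{k,l}$ together with the inclusion of $S_n$. Verifying that the defining relations (\ref{eq1})--(\ref{eq2}) and (\ref{eq12})--(\ref{eq16}) of $VB_n$ hold in $\widehat G$ — relation (\ref{eq1}) becoming (\ref{eq41}), relation (\ref{eq2}) becoming (\ref{eq40}), and the mixed relations becoming instances of the permutation action — then proves both parts simultaneously, in a manner parallel to the Reidemeister--Schreier derivation of the presentation of $VP_n$ in Theorem~\ref{theorem1.1}.
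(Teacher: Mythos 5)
The paper does not actually prove this proposition — it is imported from \cite{R} (see also \cite{BarBel}) — and the proof in those sources is precisely the Reidemeister--Schreier computation you outline, parallel to the derivation of Theorem~\ref{theorem1.1}, so your approach coincides with the source's; the one delicate point you correctly flag (that the rewritten mixed relations must collapse the $n!\,(n-1)$ Schreier generators $t\sigma_i t^{-1}$ onto exactly the $n(n-1)$ classes $x_{k,l}$, with no further identifications) is the genuine crux, and your closing two-homomorphism argument between $VB_n$ and $\widehat{H}\rtimes S_n$ is the clean way to certify it. Note only that relation (\ref{eq15}) must be read with $|i-j|\geq 2$ (a misprint in the paper, confirmed by its own presentation of $VB_3$, which omits $\sigma_1\rho_2=\rho_2\sigma_1$): with the printed $|i-j|\geq 1$ the rewriting would force identifications of the form $x_{a,b}=x_{a,c}$ and the stated presentation would fail.
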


\begin{corollary}\label{prop:lcsHn}
The group $H_2$ is isomorphic to $\Z*\Z$  which is residually nilpotent.
\end{corollary}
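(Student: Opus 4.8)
The plan is to read off the presentation of $H_2$ directly from Proposition~\ref{prop2} and then appeal to the classical residual nilpotence of free groups. First I would specialize part~1) of Proposition~\ref{prop2} to the case $n=2$. The only admissible indices are then $1$ and $2$, so the complete generating set is $\{x_{1,2},x_{2,1}\}$, giving exactly two generators. The key observation is that the defining relations are vacuous here: relation (\ref{eq40}) is imposed only when the four letters $i,j,k,l$ denote pairwise distinct indices, and relation (\ref{eq41}) only when the three letters $i,k,j$ denote pairwise distinct indices. Since at most two distinct indices are available, neither relation can be instantiated. Hence the presentation of $H_2$ consists of two generators subject to no relations, so $H_2$ is the free group of rank two, that is, $H_2\cong\mathbb{Z}*\mathbb{Z}$.

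Next I would establish residual nilpotence, which is really the only substantive ingredient. It suffices to invoke the classical theorem of Magnus that for a free group $F$ the lower central series $\gamma_k(F)$ has trivial intersection, $\bigcap_{k\geq 1}\gamma_k(F)=\{1\}$; this is precisely the statement that $F$ is residually nilpotent. The standard mechanism behind this fact is the Magnus embedding of $F$ into the group of units of the ring of formal power series in non-commuting variables $X_1,\dots,X_m$, under which a generator $x_i$ maps to $1+X_i$: one checks that an element lying in $\gamma_k(F)$ has Magnus expansion $1+(\text{terms of degree}\geq k)$, so a nontrivial element is detected in some finite nilpotent quotient $F/\gamma_k(F)$. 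Applying this with $m=2$ to $H_2\cong\mathbb{Z}*\mathbb{Z}$ yields that $H_2$ is residually nilpotent, completing the proof.

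Since the statement is a corollary, I do not expect a genuine obstacle. The identification $H_2\cong\mathbb{Z}*\mathbb{Z}$ is immediate from the index-counting observation above, and the residual nilpotence of free groups is a well-known result that I would cite rather than reprove. If one wishes, one may record the stronger fact that $\mathbb{Z}*\mathbb{Z}$ is even residually a finite $p$-group for every prime $p$, which already implies residual nilpotence; but for the purposes of this corollary the Magnus theorem is exactly what is needed.
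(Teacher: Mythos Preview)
Your argument is correct and is exactly the intended one: the paper states the corollary without a separate proof, relying on the reader to specialize Proposition~\ref{prop2} to $n=2$ just as you do, so that the generating set is $\{x_{1,2},x_{2,1}\}$ and neither (\ref{eq40}) nor (\ref{eq41}) can be instantiated, giving $H_2\cong\mathbb{Z}*\mathbb{Z}$; the residual nilpotence is then the classical Magnus theorem you cite.
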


Now we are ready prove the second main result of this section.

\begin{theorem} \label{theorem4.1}
The group $VB_3$ is linear.
\end{theorem}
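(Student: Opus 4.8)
The plan is to reduce the linearity of $VB_3$ to that of its finite-index subgroup $VP_3$, and then to exploit the decomposition of $VP_3$ furnished by Proposition~\ref{proposition4.1}. First I would invoke the standard principle that linearity is inherited by overgroups of finite index: if $H\leq G$ with $[G:H]<\infty$ and $\rho\colon H\to\operatorname{GL}_d(k)$ is faithful, then the induced representation $\operatorname{Ind}_H^G\rho\colon G\to\operatorname{GL}_{[G:H]d}(k)$ is again faithful, since its kernel equals the normal core $\bigcap_{g\in G}g(\ker\rho)g^{-1}=\bigcap_{g\in G}g\cdot 1\cdot g^{-1}=1$. Because the short exact sequence $1\to VP_3\to VB_3\to S_3\to 1$ gives $[VB_3:VP_3]=|S_3|=6$, it suffices to prove that $VP_3$ is linear.

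By Proposition~\ref{proposition4.1} we have $VP_3=(F_3\rtimes F_2)*\Z$. Here I would use the classical fact that a free product of finitely many finitely generated linear groups over a common field is again linear, which can be arranged by a ping-pong embedding of the free product into $\operatorname{GL}$ of a rational function field over the common base field. Since $\Z$ is visibly linear, the whole problem collapses to proving that the single factor $G:=F_3\rtimes F_2$ is linear.

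The hard part will be producing a faithful finite-dimensional representation of $G$. The naive attempt to use the homomorphism $F_2\to\operatorname{Aut}(F_3)$ determined by the action is useless, because $\operatorname{Aut}(F_3)$ is not known to be linear. Instead I would read off from relations (\ref{newr1})--(\ref{newr2}) that the action is of a very restricted type: each generator $\boo,\bot$ of $F_2$ acts on $F_3=\langle\bto,\btt,\bc\rangle$ as a \emph{partial conjugation}, fixing two of the three free generators and conjugating the remaining one by a word in the other two. Concretely, I would start from a faithful representation $\rho\colon F_3\to\operatorname{GL}_m(k)$, pass to a suitable transcendental extension of $k$, realize these two partial-conjugation automorphisms as conjugations by explicit invertible matrices $M_{\boo},M_{\bot}$ normalizing $\rho(F_3)$, and then set $\boo\mapsto M_{\boo}$, $\bot\mapsto M_{\bot}$ to obtain a candidate representation of the split extension $G$.

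The main obstacle is precisely the faithfulness of this extension: one must choose $\rho$ together with the matrices $M_{\boo},M_{\bot}$ so that (i) conjugation by $M_{\boo},M_{\bot}$ induces exactly the prescribed automorphisms of $\rho(F_3)$, and (ii) the images $M_{\boo},M_{\bot}$ generate a free group of rank two without unwanted relations, so that the map on the whole of $F_3\rtimes F_2$ remains injective. This is the general obstruction to linearity of split extensions $N\rtimes Q$, and I expect it to require a careful choice of base representation $\rho$ together with a ping-pong argument for the free action of $F_2$; once this step is secured, combining it with the free-product and finite-index reductions above yields the linearity of $VB_3$.
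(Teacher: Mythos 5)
Your two reduction steps are sound: induction of a faithful representation from a finite-index subgroup preserves faithfulness (your computation of the kernel as the normal core is correct), and a free product of finitely generated linear groups in the same characteristic is linear, so via Proposition~\ref{proposition4.1} everything does collapse to the linearity of $G=F_3\rtimes F_2$. But that is exactly where your argument stops being a proof. Unlike finite-index extensions and free products, there is \emph{no} general principle that a semidirect product of linear groups is linear, so the entire content of the theorem is concentrated in the step you leave open. Your sketch --- realize the two partial-conjugation automorphisms of $F_3$ as conjugation by matrices $M_{b_{11}},M_{b_{12}}$ normalizing $\rho(F_3)$, then verify freeness of $\langle M_{b_{11}},M_{b_{12}}\rangle$ and injectivity on all of $F_3\rtimes F_2$ --- is precisely the hard obstruction, and you give no construction of $\rho$, no candidate matrices, and no ping-pong argument. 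As written, conditions (i) and (ii) are restatements of what must be proved, not steps toward proving it; injectivity on the full semidirect product (not just on each factor) is a genuinely delicate mixed condition that your outline does not address. So there is a real gap: the proposal reduces the theorem to an unproved claim that is essentially as hard as the theorem itself.

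It is worth seeing how the paper sidesteps this. Instead of $VP_3$, it passes to a different finite-index subgroup: $H_3$, the normal closure of $B_3$ in $VB_3$, with $VB_3=H_3\rtimes S_3$ (Proposition~\ref{prop2}). The point is that $H_3$ decomposes as a \emph{free} product $G_1*G_2$, where $G_1=\langle x_{1,2},x_{2,3},x_{3,1}\rangle$ and $G_2=\langle x_{1,3},x_{3,2},x_{2,1}\rangle$ are each isomorphic to the circular braid group on $3$ strands; by Squier's result each embeds in $B_4$, which is linear by Bigelow--Krammer, and then the free-product and finite-index principles finish the proof. In other words, the paper chooses the subgroup so that only operations known to preserve linearity (free products, finite extensions, passage to subgroups) ever occur, and the only ``hard'' linearity input is the known linearity of $B_4$. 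If you want to salvage your route through $VP_3=(F_3\rtimes F_2)*\mathbb{Z}$, the cleanest fix is not to attack $F_3\rtimes F_2$ abstractly but to identify it (or $VP_3$ itself) with a subgroup of a group already known to be linear --- which is effectively what the paper's alternative decomposition accomplishes.
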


\begin{proof}
Since $VB_3 = H_3 \rtimes S_3$, the group $VB_3$ is linear if and only if $H_3$ is linear. The group  $H_3$ decomposes into a free product $G_1 * G_2$, where $G_1 = \langle x_{1,2}, x_{2,3}, x_{3,1} \rangle$ and $G_2 = \langle x_{1,3}, x_{3,2}, x_{2,1} \rangle$. The group $G_i,$ $i=1,2$ is isomorphic to the $2$-nd affine Artin-Tits group of type $\mathcal{A}$, also called circular braid group on $3$ strands (see~\cite{AJ}). Since $G_i$ can be embedded in $B_4$ (see~\cite{S}), it is linear. Since the free product of linear groups is linear, $H_3$ is linear. 
\end{proof}

\begin{question}
What is the minimal dimension of the faithful linear representation of $VP_3$?
\end{question}

\begin{question}
What is the image of $H_n$ under the representation $\Psi$?
\end{question}

\section{Universal braid group} \label{sec5} 
The universal braid group $UB_n$ was defined in \cite{Bar} as a group with generators $\sigma_1, \sigma_2, \ldots, \sigma_{n-1},$
$\tau_1, \tau_2, \ldots, \tau_{n-1}$ which satisfy braid relations (\ref{eq1})-(\ref{eq2}), commutativity relations 
$$
\tau_i \, \tau_j = \tau_j \, \tau_i, \qquad |i-j| \geq 2,
$$
and mixed relations 
$$
\tau_i \, \sigma_j = \sigma_j \, \tau_i \qquad |i-j| \geq 2.
$$

Recall~\cite{BS} that \textit{the Artin's group of type $I$}, denoted by $A_I$,  is the group  with generators $a_i$, $i \in I$ and defining relations
$$
a_i \, a_j \, a_i \ldots = a_j \, a_i \, a_j \ldots ,\qquad  i, j \in I,
$$
where the words from the left and from the right hand side of the equality above consist of $m_{ij}$ alternating letters $a_i$ and $a_j$. The following statement is proved in \cite{Bar}.

\begin{proposition}  \label{proposition1}
1) The elements $\sigma_1,\sigma_2,\dots,\sigma_{n-1}$ generate the braid group $B_n$ in the universal braid group $UB_n$.

2) There are homomorphisms $\varphi_{US} : UB_n \to SG_n$,  $\varphi_{UV} : UB_n \to VB_n$ and $\varphi_{UB} : UB_n \to B_n$. 

3) The group $UB_n$ is the Artin's group.
\end{proposition}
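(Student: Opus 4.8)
The plan is to treat the three claims in the order 2), 1), 3): the first statement will follow at once from one of the homomorphisms constructed in 2), while the Artin description in 3) is essentially just a reading of the presentation of $UB_n$. For 2) I would invoke the universal property of a presentation (von Dyck's theorem): to define a homomorphism out of $UB_n$ it suffices to assign images to the generators $\sigma_1,\dots,\sigma_{n-1},\tau_1,\dots,\tau_{n-1}$ so that the four families of defining relations — the braid relations (\ref{eq1})--(\ref{eq2}) among the $\sigma_i$, the commutations $\tau_i\tau_j=\tau_j\tau_i$ for $|i-j|\ge 2$, and the mixed commutations $\tau_i\sigma_j=\sigma_j\tau_i$ for $|i-j|\ge 2$ — all hold for the images. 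For $\varphi_{US}:UB_n\to SG_n$ I set $\sigma_i\mapsto\sigma_i$, $\tau_i\mapsto\tau_i$; every defining relation of $UB_n$ is then among the defining relations of $SG_n$, since the braid relations hold and the two $UB_n$-commutations are the cases $|i-j|\ge 2$ of (\ref{eq121}) and (\ref{eq131}). For $\varphi_{UV}:UB_n\to VB_n$ I set $\sigma_i\mapsto\sigma_i$, $\tau_i\mapsto\rho_i$, so the $\tau$-commutations become (\ref{eq13}) and the mixed commutations become instances of (\ref{eq15}). For $\varphi_{UB}:UB_n\to B_n$ I set $\sigma_i\mapsto\sigma_i$, $\tau_i\mapsto 1$, which renders every $\tau$-relation trivial while the braid relations hold in $B_n$.

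For 1) I observe that, since the braid relations are among the defining relations of $UB_n$, there is a natural homomorphism $\iota:B_n\to UB_n$ taking the Artin generator $\sigma_i$ of $B_n$ to $\sigma_i\in UB_n$; its image is the subgroup $\langle\sigma_1,\dots,\sigma_{n-1}\rangle$, so $\iota$ surjects onto that subgroup. To prove $\iota$ is injective I would use the retraction $\varphi_{UB}$ just built: the composite $\varphi_{UB}\circ\iota:B_n\to B_n$ fixes each $\sigma_i$ and hence is the identity, so $\iota$ is injective and $\langle\sigma_1,\dots,\sigma_{n-1}\rangle\cong B_n$.

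For 3) I would recognize the presentation of $UB_n$ as an Artin presentation. The key point, which I would state explicitly, is that every defining relation of $UB_n$ couples exactly two generators and is either a commutation ($m=2$) or a braid relation ($m=3$); no relation involves a single generator or three distinct ones. Taking $I$ to be the disjoint union of the $\sigma$-indices and the $\tau$-indices, I define the Coxeter matrix by $m_{\sigma_i,\sigma_{i+1}}=3$ and $m_{\sigma_i,\sigma_j}=2$ for $|i-j|\ge 2$; by $m_{\tau_i,\tau_j}=2$ and $m_{\tau_i,\sigma_j}=2$ for $|i-j|\ge 2$; and by $m=\infty$ (no relation) for every remaining pair, namely $\{\sigma_i,\tau_i\}$, $\{\sigma_i,\tau_{i\pm 1}\}$ and $\{\tau_i,\tau_{i\pm 1}\}$. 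The Artin group $A_I$ attached to this matrix then has exactly the generators and defining relations of $UB_n$, whence $UB_n\cong A_I$.

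Since all three arguments are verifications rather than constructions, I expect no deep obstacle; the only real care is bookkeeping. The delicate point for 3) is confirming that $UB_n$ contains no hidden multi-generator relation: a relation of the shape (\ref{eq16}), which couples three generators and is present in $VB_n$, would destroy the Artin structure, so one must check that nothing of that form occurs in the given presentation. For 1) the only non-formal step is the injectivity of $\iota$, which the retraction $\varphi_{UB}$ settles immediately.
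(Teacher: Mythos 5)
Your proposal is correct, but note that the paper contains no proof of this proposition at all: it simply states ``The following statement is proved in \cite{Bar}'' and moves on, so the comparison here is between your argument and a citation. Your proof supplies exactly the standard verification that the citation stands in for, and it is sound in all three parts: von Dyck's theorem for the three homomorphisms (the defining relations of $UB_n$ map to instances of (\ref{eq1})--(\ref{eq2}), (\ref{eq121}) and (\ref{eq131}) in $SG_n$, of (\ref{eq1})--(\ref{eq2}), (\ref{eq13}) and (\ref{eq15}) in $VB_n$, and to trivialities in $B_n$); the retraction $\varphi_{UB}\circ\iota=\mathrm{id}_{B_n}$ for part 1); and the explicit Coxeter matrix for part 3). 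Two points in your write-up deserve emphasis as being exactly the right ones. First, using $\varphi_{UB}$ as a splitting is what upgrades ``the $\sigma_i$ generate a quotient of $B_n$'' (which is all von Dyck gives for $\iota$) to ``the $\sigma_i$ generate a copy of $B_n$,'' which is what part 1) asserts. Second, your observation that every defining relation of $UB_n$ couples exactly two generators in alternating form --- in contrast to a relation such as (\ref{eq16}) in $VB_n$, which couples three and would destroy the Artin structure --- is precisely the check that makes part 3) a one-line reading of the presentation rather than a gap. The only cosmetic caveat is that the paper's definition of $A_I$ lists only the finite $m_{ij}$ relations, so your assignment $m_{ij}=\infty$ for the non-interacting pairs should be read, as is standard, as ``no relation imposed''; with that convention your identification $UB_n\cong A_I$ is immediate.
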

Statement 2) of Proposition~\ref{proposition1} is the reason why the group $UB_n$ is called the universal braid group. The commutator subgroup of $UB_n$ is studied in \cite{DG}.

Denote by  $T_n$ the group  generated by $\tau_1$, $\tau_2$, $\ldots$, $\tau_{n-1}$ with the following defining commutativity relations 
$$
\tau_i \tau_j = \tau_j \tau_i, \qquad |i-j| > 1.
$$
It is clear that $T_n$ forms a subgroup in the singular braid group $SG_n$ and a subgroup in the universal braid group $UB_n$. 
By adding to $T_{n}$ the relations $\tau_i^2 = 1$ for $i=1,2, \ldots,n-1$ we get the twin group or the flat braid group (see \cite{BSV}). 

Since the elements $\sigma_1,\sigma_2,\dots,\sigma_{n-1}$ generate a subgroup of $UB_n$ which is isomorphic to the braid group $B_n$, the elements $\tau_1,\tau_2,\dots,\tau_{n-1}$ generate a subgroup of $UB_n$ which is isomorphic to  $T_n$, and the group $UB_n$ is generated by the elements $\sigma_1,\sigma_2,\dots,\sigma_{n-1}$, $\tau_1,\tau_2,\dots,\tau_{n-1}$, in order to define some representation of $UB_n$ we have to define a representation of $B_n<UB_n$, a representation of $T_n<UB_n$, and then match these two representations.

\subsection{Representations by automorphisms of a free group}
Further we consider several representations of $T_n$ by automorphisms of free groups. The first one-parametric family of representations consists of the representations $\phi_k :  T_n \to \operatorname{Aut} (F_n)$ for $k\in \mathbb{Z}$, where $\phi_k$ acts on the generators $\tau_1,\tau_2,\dots,\tau_{n-1}$ of $T_n$ by the following rule 
$$
\phi_k(\tau_i) :  
\begin{cases}
x_i  \to x_{i+1}^{-k} x_i x_{i+1}^{k}, & \\
x_j  \to  x_j,  &  \mbox{if} \quad  j \neq i.
\end{cases}
$$

The second family of representations is the two-parametric family of representations $\psi_{k,l}:  T_n \to \operatorname{Aut} (F_n)$ for $ k,l  \in \mathbb{Z}\setminus \{0\}$, where $\phi_{k,l}$ acts on the generators $\tau_1,\tau_2,\dots,\tau_{n-1}$ of $T_n$ by the following rule 
$$
\psi_{k,l} (\tau_i) :  
\begin{cases}
x_i \to x_i^{-l}x_{i+1}^{-k} x_i x_{i+1}^k x_i^{l}, & \\
x_{i+1} \to x_i^{-l}x_{i+1}x_i^{l},&  \\
x_j \to x_j & \mbox{if} \quad  j \neq i, i+1. 
\end{cases}
$$
It is evidently that $\psi_{k,0} = \phi_k$.

Finally, the representation $\xi :  T_n \to \operatorname{Aut} (F_{3n - 2})$, where the free group $F_{3n-2}$ has the set of free generators 
$$\{ x_{1}, \ldots, x_{n}, y_{1,1}, \ldots, y_{1,n-1}, y_{2,1}, \ldots, y_{2,n-1}\},$$ 
acts on the generators $\tau_1,\tau_2,\dots,\tau_{n-1}$ of $T_n$ by the following rule 
$$
\xi(\tau_i) :
\begin{cases}
x_i \to x_i y_{i,1}, & \\
x_{i+1} \to x_{i+1} y_{i,2}, & \\
x_j \to x_j, & \mbox{if} \quad  j \neq i\\
y_{i,m} \to y_{i,m}, & \mbox{if} \quad i=1, 2 \quad \mbox{and} \quad m=1,\ldots, n-1.
\end{cases}
$$

From the definition it is clear that $\phi_k(\tau_i) = \varepsilon_{i,i+1}^k$ and $\psi_{k, l}(\tau_i) = \varepsilon_{i,i+1}^k\varepsilon_{i+1,i}^l$, therefore, representations $\phi_k$ and $\psi_{k,l}$ act from $T_n$ to the group  of basis conjugating automorphisms $Cb_n$. 
\begin{lemma}\label{easypoint}
Let $w$ be an element in $T_n$, and $s\in \{1,2,\dots,n-1\}$ be a minimal number such that $w$ cannot be written in terms of generators $\tau_1,\tau_2,\dots,\tau_{n-1}$ without $\tau_s$. If $k\neq 0$, then $\phi_k(w)(x_s)\neq x_s$.
\end{lemma}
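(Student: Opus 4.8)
The plan is to prove the statement directly by induction on the number of ``active'' strands, after two cheap reductions, finishing with a normal-form computation in a free product.

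First I would reduce to $s=1$. As $s$ is the least index with $\tau_s\in\operatorname{supp}(w)$, the word $w$ involves only $\tau_s,\dots,\tau_{n-1}$, and each $\phi_k(\tau_i)=\varepsilon_{i,i+1}^{k}$ with $i\ge s$ moves only $x_s,\dots,x_n$; hence $\phi_k(w)$ restricts to an automorphism of the free subgroup $\langle x_s,\dots,x_n\rangle$, and relabelling $x_s,\dots,x_n,\tau_s,\dots$ as $x_1,\dots,\tau_1,\dots$ lets me assume $s=1$. Next I record that the conjugator avoids $x_1$: since $\phi_k(\tau_1)$ conjugates $x_1$ by $x_2^{k}$ while every $\phi_k(\tau_i)$, $i\ge 2$, fixes $x_1$ and preserves $H:=\langle x_2,\dots,x_n\rangle$, an induction on the length of $w$ gives $\phi_k(w)(x_1)=a^{-1}x_1a$ with $a\in H$. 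Because $H\cap\langle x_1\rangle=\{1\}$, the assertion $\phi_k(w)(x_1)\ne x_1$ is equivalent to $a\ne1$.

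Then I would compute $a$ explicitly. Writing $w=u_0\tau_1^{\delta_1}u_1\cdots\tau_1^{\delta_p}u_p$ with $p\ge1$ and each $u_j$ free of $\tau_1$, and processing generators in order, one gets $a=\prod_{j=p}^{1}B_j(x_2^{\delta_j k})$, where $B_j$ is the automorphism of $H$ induced by the part of $w$ following the $j$-th occurrence of $\tau_1$. Since $\phi_k(\tau_2)$ conjugates $x_2$ by $x_3^{k}$ and all $\phi_k(\tau_i)$, $i\ge3$, fix $x_2$, each $B_j$ sends $x_2$ to a conjugate $w_j^{-1}x_2w_j$ with $w_j\in F''':=\langle x_3,\dots,x_n\rangle$. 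Thus $a=\prod_{j=p}^{1}w_j^{-1}x_2^{\delta_j k}w_j$ lies in the free product $\langle x_2\rangle * F'''$, and rewriting it as $a=w_p^{-1}\,x_2^{\delta_p k}\,(w_pw_{p-1}^{-1})\,x_2^{\delta_{p-1}k}\cdots(w_2w_1^{-1})\,x_2^{\delta_1 k}\,w_1$ exhibits syllables alternating between the nontrivial powers $x_2^{\delta_j k}$ (here $k\ne0$ enters) and the elements $w_{j+1}w_j^{-1}\in F'''$. This word is reduced, hence $a\ne1$, as soon as every interior syllable $w_{j+1}w_j^{-1}$ is nontrivial.

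The heart of the matter is therefore to force $w_{j+1}\ne w_j$ for $1\le j\le p-1$. Because $B_j$ factors through $B_{j+1}$ composed with the automorphism induced by $u_j$, injectivity of $B_{j+1}$ gives $w_j=w_{j+1}$ precisely when $\phi_k(u_j)$ fixes $x_2$; and here the lemma feeds into itself, since $u_j$ is a word in $\tau_2,\dots,\tau_{n-1}$, so by the inductive hypothesis (the same statement on strictly fewer active strands, applied at index $2$) this happens if and only if $\tau_2\notin\operatorname{supp}(u_j)$. I would exploit this by first rewriting $w$: whenever an interior block $u_j$ contains no $\tau_2$ it lies in $\langle\tau_3,\dots,\tau_{n-1}\rangle$ and hence commutes with $\tau_1$, so the neighbouring $\tau_1$-powers slide together and merge or cancel. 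Each such move strictly decreases $p$, the process terminates, and $p\ge1$ is maintained because $\tau_1\in\operatorname{supp}(w)$ forbids all $\tau_1$'s from disappearing; at the end every interior block contains $\tau_2$, so every $w_{j+1}w_j^{-1}\ne1$ and $a\ne1$ (the case $p=1$ being the single nontrivial conjugate $w_1^{-1}x_2^{\delta_1 k}w_1$). The base case, where $w$ is a nonzero power of a single generator, is immediate.

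I expect the main obstacle to be the bookkeeping of supports in the partially commutative group $T_n$: one must justify that $\operatorname{supp}$ is a genuine invariant of the group element (so that ``$\tau_1\in\operatorname{supp}(w)$'' persists through the rewriting and guarantees $p\ge1$), and verify that the self-referential appeal to the lemma is only ever made on words using strictly fewer of the generators, so that the induction on the number of active strands is well founded.
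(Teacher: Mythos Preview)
Your proposal is correct and takes essentially the same approach as the paper: both express $\phi_k(w)(x_s)$ as a conjugate of $x_s$ by an alternating word in the free product $\langle x_{s+1}\rangle*\langle x_{s+2},\dots,x_n\rangle$ and use induction on the set of active generators to control the interior syllables. The only difference is bookkeeping---you preprocess $w$ by commuting $\tau_s$ past $\tau_{s+1}$-free interior blocks so that the free-product word is immediately reduced, whereas the paper leaves $w$ alone and reaches the same conclusion via a minimal-counterexample argument on $|w|$.
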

\begin{proof}Every word $w$ in the group $T_n$ can be written in the normal form
	$$w=\tau_{s_1}^{\alpha_1}\tau_{s_2}^{\alpha_2}\dots\tau_{s_r}^{\alpha_r},$$
where $\alpha_1,\alpha_2,\dots,\alpha_r$ are non-zero integers, and for all $i=1,2,\dots,r-1$ we have either $s_i<s_{i+1}$ or $s_i=s_{i+1}+1$. If the word $w$ is written in the normal form using generators $\tau_{s_1},\tau_{s_2},\dots,\tau_{s_r}$, then $w$ cannot be written (even not in normal form) withour using these generators. From this fact it follows that the word $w$ from the formulation of the lemma can be written in terms of $\tau_{s},\tau_{s+1},\dots,\tau_{n-1}$, and the generator $\tau_s$ is used in the expression of $w$.	
	
	We will prove the lemma using specific induction on $n$. First, we will prove that the lemma is correct in the case when $s=n-1$, then we will assume that the lemma is correct for all numbers $s+1,s+2,\dots,n-1$, and using this conjecture we will prove that the lemma is correct for $s$.

The basis of the induction is obvious: if $s=n-1$, then $w=\tau_{n-1}^{\alpha}$ for $\alpha\neq 0$, and from the definition of $\phi_k$ it is clear that $$\phi_k(w)(x_{n-1})=\phi_k(\tau_{n-1}^{\alpha})(x_{n-1})\neq x_{n-1}.$$ 
Let us assume that the lemma is correct for all numbers $s+1,s+2,\dots,n-1$, and let us prove the lemma for $s$.
	
By contrary, suppose that $w$ is a counter example with the minimal value $|w|$, where $|w|$ denotes the length of $w$, when it is written in terms of generators $\tau_1,\tau_2,\dots,\tau_{n-1}$. This assumption means that $w$ cannot be written in terms of generators $\tau_1,\tau_2,\dots,\tau_{n-1}$ without $\tau_s$ and $\phi_k(w)(x_s)=x_s$. Since $s\in \{1,2,\dots,n-1\}$ is a minimal number such that $w$ cannot be written in terms of generators $\tau_1,\tau_2,\dots,\tau_{n-1}$ without $\tau_s$, the element $w$ can be written in the following form
\begin{equation}\label{welem}w=w_1\tau_s^{\alpha_1}w_2\tau_s^{\alpha_2}\dots w_m\tau_s^{\alpha_m}w_{m+1},
\end{equation}
where $\alpha_1,\alpha_2,\dots,\alpha_m$ are non-zero integers, and $w_1,w_2,\dots,w_{m+1}$ are elements from $T_n$ which can be written in terms of $\tau_{s+1},\tau_{s+2},\dots,\tau_{n-1}$. 

From the definition of the representation $\phi_k$ it is clear that since $w_{m+1}$ can be written in terms of generators $\tau_{s+1},\tau_{s+2},\dots,\tau_{n-1}$, and $\phi_k(w)(x_s)=x_s$ then for the element $$\widetilde{w}=w_{m+1}ww_{m+1}^{-1}=w_{m+1}w_1\tau_s^{\alpha_1}w_2\tau_s^{\alpha_2}\dots w_m\tau_s^{\alpha_m}$$
we have the equality $\phi_k(\widetilde{w})(x_s)=x_s$. Since $|\widetilde{w}|\leq |w|$, considering the element $\widetilde{w}$ instead of $w$ we can assume that the equality $w_{m+1}=1$ holds in (\ref{welem}), i.~e.
\begin{equation}\label{welem2}w=w_1\tau_s^{\alpha_1}w_2\tau_s^{\alpha_2}\dots w_m\tau_s^{\alpha_m}.
\end{equation}
Since the element $w_1$ can be written in terms of generators $\tau_{s+1},\tau_{s+2},\dots,\tau_{n-1}$ from the definition of the representation $\phi_k$ it is clear that 
$$\phi_k(w_1\tau_s^{\alpha_1}w_2\tau_s^{\alpha_2}\dots w_m\tau_s^{\alpha_m})(x_s)=\phi_k(\tau_s^{\alpha_1}w_2\tau_s^{\alpha_2}\dots w_m\tau_s^{\alpha_m})(x_s).$$
Since $|\tau_s^{\alpha_1}w_2\tau_s^{\alpha_2}\dots w_m\tau_s^{\alpha_m}|\leq |w_1\tau_s^{\alpha_1}w_2\tau_s^{\alpha_2}\dots w_m\tau_s^{\alpha_m}|=|w|$, considering the element $\tau_s^{\alpha_1}w_2\tau_s^{\alpha_2}\dots w_m\tau_s^{\alpha_m}$ instead of $w$ we can assume that the equality $w_{1}=1$ holds in (\ref{welem}), i.~e.
\begin{equation}\label{welem3}w=\tau_s^{\alpha_1}w_2\tau_s^{\alpha_2}w_3\dots w_m\tau_s^{\alpha_m}.
\end{equation}
Consider the following elements in $T_n$:
\begin{align*}
u_1&=w_2w_3w_4\dots w_m,\\
u_2&=w_3w_4\dots w_m,\\
u_3&=w_4\dots w_m,\\
&~~\vdots\\
u_{m-1}&=w_m,\\
u_m&=1.
\end{align*}
From the definition of the representation $\phi_k$ and direct calculations it follows that
\small
\begin{equation}\label{actiononxs}
\phi_k(w)(x_s)=x_s^{\left(\phi_k(u_m)(x_{s+1}^{\beta_m})\right)   \left(\phi_k(u_{m-1})(x_{s+1}^{\beta_{m-1}})\right)\dots \left(\phi_k(u_2)(x_{s+1}^{\beta_2})\right)\left(\phi_k(u_1)(x_{s+1}^{\beta_1})\right)},
\end{equation}
\normalsize
where $\beta_i=k\alpha_i$ for $i=1,2,\dots,m$. Since $\phi_k(w)(x_s)=x_s$ and (from the definition of $\phi_k$) the element  
$$\left(\phi_k(u_m)(x_{s+1}^{\beta_m})\right)   \left(\phi_k(u_{m-1})(x_{s+1}^{\beta_{m-1}})\right)\dots \left(\phi_k(u_2)(x_{s+1}^{\beta_2})\right)\left(\phi_k(u_1)(x_{s+1}^{\beta_1})\right)$$ 
can be written in terms of $x_{s+1},x_{s+2},\dots,x_n$, from equality (\ref{actiononxs}) we conclude the equality 
\small
\begin{equation}
	\label{topeq1}\left(\phi_k(u_m)(x_{s+1}^{\beta_m})\right)   \left(\phi_k(u_{m-1})(x_{s+1}^{\beta_{m-1}})\right)\dots \left(\phi_k(u_2)(x_{s+1}^{\beta_2})\right)\left(\phi_k(u_1)(x_{s+1}^{\beta_1})\right)=1.
\end{equation}
\normalsize
 From the definition of $\phi_k$ it is easy to see that for $i=1,2,\dots,m$ we have the equality \begin{equation}\label{whereviis}\left(\phi_k(u_i)(x_{s+1}^{\beta_i})\right)=\left(x_{s+1}^{\beta_i}\right)^{v_i},
 \end{equation}
 where $v_1,v_2,\dots,v_m$ are elements in $F_n$ which can be written in terms of $x_{s+2},x_{s+3},\dots,x_n$. From the equality $u_m=1$ we have $$\left(\phi_k(u_m)(x_{s+1}^{\beta_m})\right)=x_{s+1}^{\beta_m}$$ and taking into account equality (\ref{whereviis}) we can rewrite equality (\ref{topeq1}) in the following form
 \begin{equation}
 	\label{topeq2}
 	x_{s+1}^{\beta_m}\left(x_{s+1}^{\beta_{m-1}}\right)^{v_{m-1}}\dots \left(x_{s+1}^{\beta_2}\right)^{v_2}\left(x_{s+1}^{\beta_1}\right)^{v_1}=1.
 \end{equation}
Due to the fact that for $i=1,2,\dots,m$ the elements $v_2,v_3,\dots,v_m\in F_n$ can be written in terms of $x_{s+2},x_{s+3},\dots,x_n$, in order for the element $x_{s+1}^{\beta_m}$ in equality (\ref{topeq2}) to be reduced with something, there must be a number $t$ such that $\left(x_{s+1}^{\beta_t}\right)^{v_t}=x_{s+1}^{\beta_t}$ and 
 \begin{equation}
	\label{topeq3}
	x_{s+1}^{\beta_m}\left(x_{s+1}^{\beta_{m-1}}\right)^{v_{m-1}}\dots \left(x_{s+1}^{\beta_{t+1}}\right)^{v_{t+1}}\left(x_{s+1}^{\beta_t}\right)^{v_t}=1.
\end{equation}
From equality (\ref{topeq3}) it follows that 
\begin{align}
	\notag x_s&=x_s^{x_{s+1}^{\beta_m}\left(x_{s+1}^{\beta_{m-1}}\right)^{v_{m-1}}\dots \left(x_{s+1}^{\beta_{t+1}}\right)^{v_{t+1}}\left(x_{s+1}^{\beta_t}\right)^{v_t}}\\
	\notag &=x_{s}^{\left(\phi_k(u_m)(x_{s+1}^{\beta_m})\right)   \left(\phi_k(u_{m-1})(x_{s+1}^{\beta_{m-1}})\right)\dots \left(\phi_k(u_t)(x_{s+1}^{\beta_t})\right)}\\
\label{shorterelem}	&=\phi_k(\tau_s^{\alpha_t}w_{t+1}\tau_s^{\alpha_{t+1}}w_{t+2}\dots\tau_s^{\alpha_{m-1}}w_{m}\tau_s^{\alpha_m})(x_s)
\end{align}
If $t\neq 1$, then $$|\tau_s^{\alpha_t}w_{t+1}\tau_s^{\alpha_{t+1}}w_{t+2}\dots\tau_s^{\alpha_{m-1}}w_{m}\tau_s^{\alpha_m}|<|\tau_s^{\alpha_1}w_2\tau_s^{\alpha_2}w_3\dots w_m\tau_s^{\alpha_m}|=|w|$$
and equality (\ref{shorterelem}) contradicts the fact that $w$ is a  counter example to the lemma with the minimal value $|w|$, hence we have $t=1$, what means that 
$$\phi_k(u_1)(x_{s+1}^{\beta_1})=\left(x_{s+1}^{\beta_1}\right)^{v_1}=x_{s+1}^{\beta_1}.$$
This equality together with the definition of $\phi_k$ implies that the eqiality $\phi_k(u_1)(x_{s+1})=x_{s+1}$ holds. 

Let us calculate $\phi_k(w)(x_{s+1})$. From the definition of $\phi_k$ we have the following equality
\begin{multline*}\phi_k(w)(x_{s+1})=\phi_k(\tau_s^{\alpha_1}w_2\tau_s^{\alpha_2}w_3\dots w_m\tau_s^{\alpha_m})(x_{s+1})=\\
	=\phi_k(w_2w_3\dots w_m)(x_{s+1})=\phi_k(u_1)(x_{s+1})=x_{s+1}.
	\end{multline*}
So, now we have the following equalities
\begin{align}
\label{almost}w&=\tau_s^{\alpha_1}w_2\tau_s^{\alpha_2}w_3\dots w_m\tau_s^{\alpha_m},\\
\label{almost1}\phi_k(w)(x_s)&=x_s,\\
\label{almost2}\phi_k(w)(x_{s+1})&=x_{s+1}.
\end{align}
Note that in equality  (\ref{almost}) the number $m$ can not be equal to $1$, since if $m=1$, then $w=\tau_s^{\alpha_1}$, and it is clear that $\phi_k(\tau_s^{\alpha_1})(x_s)\neq x_s$ for $\alpha_1\neq 0$. Therefore $m>1$.

Consider the element $$\widetilde{w}=\tau_s^{-\alpha_1}w\tau_s^{\alpha_1}=w_2\tau_s^{\alpha_2}w_3\dots w_m\tau_s^{\alpha_m+\alpha_1}.$$ 
From equalities (\ref{almost1}), (\ref{almost2}) we have the equality
$$\phi_k(\widetilde{w})(x_s)=x_s.$$
Since the element $w_2$ can be written in terms of generators $\tau_{s+1},\tau_{s+2},\dots,\tau_{n-1}$ from the definition of the representation $\phi_k$ it is clear that 
$$x_s=\phi_k(w_2\tau_s^{\alpha_2}w_3\dots w_m\tau_s^{\alpha_m+\alpha_1})(x_s)=\phi_k(\tau_s^{\alpha_2}w_3\dots w_m\tau_s^{\alpha_m+\alpha_1})(x_s)$$
and since $|\tau_s^{\alpha_2}w_3\dots w_m\tau_s^{\alpha_m+\alpha_1}|<|w_2\tau_s^{\alpha_2}w_3\dots w_m\tau_s^{\alpha_m+\alpha_1}|$ we must have that the element $\tau_s^{\alpha_2}w_3\dots w_m\tau_s^{\alpha_m+\alpha_1}$ doesn't depent on $\tau_s$. It means that $m=2$ and $\alpha_1+\alpha_m=0$, and hence 
$$w=\tau_s^{\alpha_1}w_2\tau_s^{-\alpha_1},$$
where the word $w_2$ can be written in terms of generators $\tau_{s+1},\tau_{s+2},\dots,\tau_n$ and $\phi_k(w_2)(x_{s+1})=x_{s+1}$. By the unduction conjecture the word $w$ can be written in terms of $\tau_{s+2},\tau_{s+3},\dots,\tau_{n-1}$, therefore from the definint relations of $T_n$ we have $w=\tau_s^{\alpha_1}w_2\tau_s^{-\alpha_1}=w_2$, i.~e. $w$ can be written without using $\tau_{s}$, what contradicts conditions of the lemma.
\end{proof}

The following theorem is the main result of this section.
\begin{theorem}\label{psikfa}The following statements hold.
\begin{enumerate}	
	\item If $k\neq 0$, then the representation $\phi_{k} : T_n \to Cb_n$ is faithful.
	
	\item If $k\neq 0$, then the representation $\psi_{k, k} : T_n \to Cb_n$ is faithful.
	
    \item The representation $\xi :  T_n \to \operatorname{Aut}(F_{3n - 2})$ is faithful for $n = 3$ and has non-trivial kernel for $n \geqslant 4$. 
    \end{enumerate}
\end{theorem}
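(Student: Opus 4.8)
The plan is to handle the three statements separately, leaning on Lemma~\ref{easypoint} throughout. \textbf{Statement (1)} is immediate from that lemma: if $w\in T_n$ is nontrivial and $s$ is the minimal index such that $w$ cannot be written without $\tau_s$, then $\phi_k(w)(x_s)\neq x_s$ for $k\neq0$, so $\phi_k(w)$ is not the identity and $\phi_k$ is injective.

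For \textbf{statement (2)} I would establish the exact analogue of Lemma~\ref{easypoint}: if $s$ is the minimal index occurring in $w$, then $\psi_{k,k}(w)(x_s)\neq x_s$. The one new computational input is that the restriction of $\psi_{k,k}(\tau_i)$ to $\langle x_i,x_{i+1}\rangle$ is conjugation by the element $c_i:=x_{i+1}^k x_i^k$, that $\psi_{k,k}(\tau_i)(c_i)=c_i$, and that $\psi_{k,k}(\tau_i)(x_j)=x_j$ for $j\neq i,i+1$; all three are one-line checks from the definition. Since $c_i$ is fixed, $\psi_{k,k}(\tau_i^{\alpha})$ acts on $x_i$ as iterated conjugation by $c_i^{\alpha}$, exactly as $\phi_k(\tau_i^{\alpha})$ conjugates $x_i$ by $x_{i+1}^{k\alpha}$ in the proof of Lemma~\ref{easypoint}. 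Running the same cyclic and prefix reductions that normalize $w$ to $\tau_s^{\alpha_1}w_2\tau_s^{\alpha_2}\cdots\tau_s^{\alpha_m}$ with $w_j\in\langle\tau_{s+1},\dots,\tau_{n-1}\rangle$, one obtains the counterpart of~(\ref{actiononxs}), namely $\psi_{k,k}(w)(x_s)=x_s^{\,g}$ with $g=\prod_i\psi_{k,k}(u_i)(c_s^{\alpha_i})$.

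The main obstacle, and the essential difference from $\phi_k$, is that $c_s$ involves $x_s$ itself, so $\psi_{k,k}$ is not \emph{triangular} and the conjugating words are no longer free of $x_{s+1}$. I would absorb this by working in the free product $F_n=\langle x_s\rangle*\langle x_j:j\neq s\rangle$. Because $\psi_{k,k}$ takes values in $Cb_n$, each $\psi_{k,k}(u_i)$ fixes $x_s$ and sends $x_{s+1}$ to a conjugate $V_i$ of itself inside $\langle x_{s+1},\dots,x_n\rangle$, so $\psi_{k,k}(u_i)(c_s^{\alpha_i})=(V_i^k x_s^k)^{\alpha_i}$ has its $x_s$-syllables cleanly separated from its $V_i$-syllables. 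As the centralizer of $x_s$ in $F_n$ is $\langle x_s\rangle$, the hypothesis $\psi_{k,k}(w)(x_s)=x_s$ forces $g\in\langle x_s\rangle$, and applying the retraction $x_s\mapsto 1$ produces $\prod_i V_i^{k\alpha_i}=1$, an equation of the same shape as~(\ref{topeq1}) whose leading factor is the clean power $x_{s+1}^{k\alpha_m}$ (since $u_m=1$). The leading-term cancellation then forces some conjugator to commute with $x_{s+1}$, i.e.\ $\psi_{k,k}(u_t)(x_{s+1})=x_{s+1}$; this is precisely the input on which the induction hypothesis for the range $\{s+1,\dots,n-1\}$ acts, and the descent proceeds as in Lemma~\ref{easypoint} (forcing $m=2$, $\alpha_1+\alpha_m=0$, $w=\tau_s^{\alpha_1}w_2\tau_s^{-\alpha_1}$ with $\psi_{k,k}(w_2)(x_{s+1})=x_{s+1}$), contradicting the minimality of $s$. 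The only genuinely new labor is the free-product bookkeeping of the two syllable types.

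For \textbf{statement (3)}, when $n=3$ there is no relation between $\tau_1$ and $\tau_2$, so $T_3$ is free of rank two, and I would track only the middle generator $x_2$, which is moved by both generators. Since every $y$ is fixed, $\xi(w)(x_2)=x_2\,\Lambda(w)$ defines an anti-homomorphism $\Lambda:T_3\to\langle y_{1,2},y_{2,1}\rangle$ with $\Lambda(\tau_1)=y_{1,2}$ and $\Lambda(\tau_2)=y_{2,1}$; as these freely generate a rank-two free group, $\Lambda$ is injective, whence $\xi(w)\neq\mathrm{id}$ for $w\neq1$ and $\xi$ is faithful. For $n\geq4$ I would produce an explicit kernel element. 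Each $x_i$ is affected only by $\tau_{i-1}$ and $\tau_i$, so $\xi(w)=\mathrm{id}$ amounts to the vanishing of every consecutive-pair projection of $w$ (the map deleting all $\tau_j$ with $j\notin\{i-1,i\}$). For
$$w=\tau_1\tau_2^{-1}\tau_3\tau_2\tau_1^{-1}\tau_2^{-1}\tau_3^{-1}\tau_2$$
deleting $\tau_3$ gives $\tau_1\tau_1^{-1}=1$, deleting $\tau_1$ gives $\tau_3\tau_3^{-1}=1$, and the $\tau_1$- and $\tau_3$-exponent sums vanish, so $\xi(w)=\mathrm{id}$. Yet in $T_n$ the elements $\tau_1,\tau_2,\tau_3$ satisfy only $[\tau_1,\tau_3]=1$, hence generate $\langle\tau_1,\tau_3\rangle*\langle\tau_2\rangle\cong\mathbb{Z}^2*\mathbb{Z}$, in which $w$ is a reduced alternating word of syllable length eight and therefore nontrivial. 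Thus $\ker\xi\neq1$ for every $n\geq4$, the only point requiring care being the verification that $w\neq1$ via the free-product normal form.
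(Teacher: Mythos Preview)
Your arguments for parts (1) and (3) are correct. Part (1) is identical to the paper's proof. For part (3) you use a different kernel element than the paper (which takes $\tau_1\tau_2\tau_3\tau_2^{-1}\tau_1^{-1}\tau_2\tau_3^{-1}\tau_2^{-1}$), but your element and your free-product normal-form verification are valid.

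Part (2), however, has a genuine gap. The formula you claim as ``the counterpart of~(\ref{actiononxs})'', namely $\psi_{k,k}(w)(x_s)=x_s^{\,g}$ with $g=\prod_i\psi_{k,k}(u_i)(c_s^{\alpha_i})$, does not follow by the same calculation as in Lemma~\ref{easypoint}. The reason is exactly the non-triangularity you flag but do not actually absorb: $\psi_{k,k}(\tau_s)$ moves $x_{s+1}$ as well as $x_s$. Already for $m=2$ one gets
\[
\psi_{k,k}(\tau_s^{\alpha_1}w_2\tau_s^{\alpha_2})(x_s)=x_s^{\,c_s^{\alpha_2}\,\psi_{k,k}(w_2\tau_s^{\alpha_2})(c_s^{\alpha_1})},
\]
and $\psi_{k,k}(w_2\tau_s^{\alpha_2})(c_s)\neq\psi_{k,k}(w_2)(c_s)$ in general, since $\psi_{k,k}(w_2)(c_s)=V^k x_s^k$ involves $x_{s+1}$ (through $V$), which $\psi_{k,k}(\tau_s^{\alpha_2})$ then conjugates letter-by-letter rather than globally. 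Consequently the later steps (the retraction $x_s\mapsto1$ yielding $\prod_i V_i^{k\alpha_i}=1$, the leading-term cancellation, and the descent) rest on a formula that has not been established.

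The paper sidesteps this entirely and gives a two-line proof of (2) by citation: by Collins' theorem the subgroup $\langle\sigma_1^2,\dots,\sigma_{n-1}^2\rangle\le B_n$ has exactly the presentation of $T_n$, so $\tau_i\mapsto\sigma_i^2$ is an isomorphism onto it; and Shpilrain showed that $\sigma_i^2\mapsto\varepsilon_{i,i+1}^k\varepsilon_{i+1,i}^k=\psi_{k,k}(\tau_i)$ gives a faithful representation of $P_n$ for $k\neq0$. Composing these yields the faithfulness of $\psi_{k,k}$. If you want to keep your direct approach, you would need to derive a correct closed form for $\psi_{k,k}(w)(x_s)$ that keeps track of the extra $\tau_s$-action on the intermediate $x_{s+1}$-letters; this is substantially more bookkeeping than in Lemma~\ref{easypoint}.
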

\begin{proof}(1) Let $w$ be an element from ${\rm Ker}(\phi_k)$. Since for all $s\in\{1,2,\dots,n\}$ we have $\phi_k(w)(x_s)=x_s$, from Lemma~\ref{easypoint} it follows that in the representation of $w$ in terms of $\tau_1,\tau_2,\dots,\tau_{n-1}$ there are no generators $\tau_1,\tau_2,\dots,\tau_{n-1}$, hence, $w=1$.
	
(2) From result of Collins \cite{C} follows that the  subgroup $\langle \sigma_1^2, \sigma_2^2, \dots , \sigma_{n-1}^2 \rangle$ of $B_n$ has the presentation
$$
\langle \sigma_1^2, \sigma_2^2, \dots , \sigma_{n-1}^2 ~|~ \sigma_i^2\sigma_j^2 = \sigma_j^2\sigma_i^2 \text{ for } |i - j| \geqslant 2  \rangle,
$$
hence the map $\tau_i\to\sigma_i^2$ for $i=1,2,\dots,n-1$ gives an isomorphism $$T_n\to \langle \sigma_1^2, \sigma_2^2, \dots , \sigma_{n-1}^2 \rangle<P_n.$$ From the main result of the paper \cite{Shpil} it follows that if $k\neq 0$, then the map $\sigma_i^2\mapsto \varepsilon_{i,i+1}^k\varepsilon_{i+1,i}^k=\psi_{k,k}(\tau_i)$ for $i=1,2,\dots,n-1$ gives a faithful representation of $P_n$ to ${\rm Aut}(F_n)$. Since $\psi_{k,k}$ is the superposition of these two maps 
$$T_n\to \langle \sigma_1^2, \sigma_2^2, \dots , \sigma_{n-1}^2 \rangle\to {\rm Aut}(F_n),$$
the representation $\psi_{k,k}$ is faithful.

	(3) For $n = 3$ the group $\xi(T_3)$ acts on the generator $x_2$ as the free group $F_2$ which is isomorphic to $T_3$, therefore  the representation $\xi :  T_n \to \operatorname{Aut}(F_{3n - 2})$ is faithful for $n=3$.  If $n \geqslant 4$, then from direct calculations it is easy to see that the element
	$$
	\tau_1\tau_2\tau_3\tau_2^{-1}\tau_1^{-1}\tau_2\tau_3^{-1}\tau_2^{-1}
	$$
	belongs to the kernel of $\xi$.
\end{proof}

Let us now consider extensions of the representations $\phi_k,\psi_{k,l}$ from the group $T_n$ to the group $UB_n$, 
\begin{align*}
\widetilde{\phi}_k &: UB_n \to \operatorname{Aut} (F_n),&&k\in\mathbb{Z},\\
\widetilde{\psi}_{k,l} &: UB_n \to \operatorname{Aut}(F_n), &&k,l  \in \mathbb{Z}
\end{align*}
which act on the generators $\sigma_1,\sigma_2,\dots,\sigma_{n-1}$ of $UB_n$ as Artin representation presented in Section~\ref{braidsandautomorphisms}. The following statement says that despite the fact that according to Theorem~\ref{psikfa} the representation $\phi_k:T_3\to {\rm Aut}(F_3)$ is faithful for $k\neq 0$, the extension $\widetilde{\phi}_k : UB_3 \to \operatorname{Aut}(F_3)$ is not faithful.

\begin{proposition} 
	The representation $\widetilde{\phi}_k : UB_3 \to \operatorname{Aut}(F_3)$ has non-trivial kernel for any $k \in \mathbb{Z}$.
\end{proposition}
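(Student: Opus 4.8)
The plan is to exploit the fact that for $n=3$ the universal braid group is a free product. Indeed, for $n=3$ the commutativity relations $\tau_i\tau_j=\tau_j\tau_i$ and the mixed relations $\tau_i\sigma_j=\sigma_j\tau_i$ (both requiring $|i-j|\ge 2$) are vacuous, so $UB_3=B_3 * T_3$ with $T_3=\langle\tau_1,\tau_2\rangle$ free of rank $2$. Consequently an element is nontrivial precisely when its free-product normal form (an alternating product of nonempty $B_3$- and $T_3$-syllables) is nonempty, and this is exactly the tool I would use to certify nontriviality. The representation $\widetilde\phi_k$ lands in the conjugating automorphisms, with $\widetilde\phi_k(\tau_i)=\phi_k(\tau_i)=\varepsilon_{i,i+1}^k$ and $\widetilde\phi_k(\sigma_i)$ the Artin automorphism; so I want to produce a nonempty reduced word that $\widetilde\phi_k$ sends to the identity.

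The concrete candidate I would check is the commutator $w=[\tau_1,\ \sigma_2\tau_2\sigma_2^{-1}]$. First I compute the two relevant images. From $\phi_k(\tau_1)=\varepsilon_{12}^k$ we get $\widetilde\phi_k(\tau_1):x_1\mapsto x_2^{-k}x_1x_2^{k}$, fixing $x_2,x_3$. For the conjugate, a direct computation — equivalently using $\varepsilon_{i,i+1}=\alpha_i\sigma_i^{-1}$ together with $\alpha_i\varepsilon_{i,i+1}\alpha_i=\varepsilon_{i+1,i}$ — shows $\widetilde\phi_k(\sigma_2\tau_2\sigma_2^{-1})=\varepsilon_{32}^k:x_3\mapsto x_2^{-k}x_3x_2^{k}$, fixing $x_1,x_2$.

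The key point is then immediate: the automorphisms $\varepsilon_{12}^k$ and $\varepsilon_{32}^k$ move disjoint generators ($x_1$ and $x_3$, respectively) and each fixes $x_2$, the only generator occurring in the other's conjugating word; hence they commute, so $\widetilde\phi_k(w)=[\varepsilon_{12}^k,\varepsilon_{32}^k]=\mathrm{id}$ for every $k\in\mathbb Z$ (the case $k=0$ being trivial anyway). On the other hand $w=\tau_1^{-1}\sigma_2\tau_2^{-1}\sigma_2^{-1}\tau_1\sigma_2\tau_2\sigma_2^{-1}$ is a reduced word whose syllables alternate strictly between $T_3$ and $B_3$, so $w\ne 1$ in $UB_3$. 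This exhibits the required nontrivial kernel element.

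The step I expect to be the genuine obstacle is achieving uniformity in $k$. The naive attempt to work inside the two-generator subgroup $\langle\sigma_1,\tau_1\rangle$ does produce a kernel element when $k=\pm1$ — there $\varepsilon_{12}\sigma_1=\alpha_1$ is an involution, so $(\tau_1\sigma_1)^2$ dies — but for $|k|\ge 2$ this subgroup actually embeds: the automorphisms $\varepsilon_{12}^k,\varepsilon_{21}^k$ and $a_{12}=\sigma_1^2$ generate a free group of rank $3$ (verifiable by a Stallings folding / Euler-characteristic count), so no relation survives. Recognizing this forces the construction to spread its support across all three strands, which is exactly why the commutator above is taken between $\tau_1$ (living on strands $1,2$) and a $\sigma_2$-conjugate of $\tau_2$ (living on strands $2,3$): the commuting of the images becomes a disjoint-support phenomenon valid for all $k$ at once.
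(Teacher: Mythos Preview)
Your argument has a genuine computational error that breaks the proof. The claim that $\widetilde\phi_k(\sigma_2\tau_2\sigma_2^{-1})=\varepsilon_{32}^k$ is false. The identity $\alpha_i\varepsilon_{i,i+1}\alpha_i=\varepsilon_{i+1,i}$ you quote involves conjugation by $\alpha_i$, not by $\sigma_i$; since $\alpha_2$ and $\sigma_2^2$ do not commute in $C_3$, conjugation by $\sigma_2$ does something different. A direct computation (using the paper's right-action convention) gives, for instance,
\[
\widetilde\phi_k(\sigma_2\tau_2\sigma_2^{-1})(x_2)=x_3^{-1}x_2^{-k}x_3\,x_2\,x_3^{-1}x_2^{k}x_3,
\]
which is not $x_2$ when $k\neq 0$. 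Consequently the ``disjoint support'' picture collapses: since $\widetilde\phi_k(\tau_1)=\varepsilon_{12}^k$ conjugates $x_1$ by a power of $x_2$, while the other factor moves $x_2$ nontrivially inside $\langle x_2,x_3\rangle$, one checks that $[\varepsilon_{12}^k,\,\widetilde\phi_k(\sigma_2\tau_2\sigma_2^{-1})]$ does not fix $x_1$ for $k\neq 0$. So your element $w=[\tau_1,\sigma_2\tau_2\sigma_2^{-1}]$ is \emph{not} in $\ker\widetilde\phi_k$, and the proof does not go through.

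The paper's approach is simply to write down a specific word,
\[
\sigma_1\tau_1^{k}\sigma_2\tau_2^{k}\sigma_1\tau_1^{k}\tau_2^{-k}\sigma_2^{-1}\tau_1^{-k}\sigma_1^{-1}\tau_2^{-k}\sigma_2^{-1},
\]
and verify by direct calculation that its image is the identity; nontriviality in $UB_3=B_3*T_3$ is immediate from its alternating normal form. Your overall strategy (free-product normal form for nontriviality, explicit kernel element for the other direction) is exactly right --- only the candidate needs to be replaced. If you want to repair your idea rather than adopt the paper's element, note that conjugating $\varepsilon_{23}^k$ by $\sigma_2^{-1}$ (rather than $\sigma_2$) \emph{does} give $\varepsilon_{32}^k$, so $[\tau_1,\sigma_2^{-1}\tau_2\sigma_2]$ is the commutator you were aiming for.
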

\begin{proof}
	Using direct calculations it is easy to see that the element 
	$$\sigma_1 \tau_1^k \sigma_2 \tau_2^k \sigma_1 \tau_1^k \tau_2^{-k} \sigma_2^{-1} \tau_1^{-k} \sigma_1^{-1} \tau_2^{-k} \sigma_2^{-1}$$
	belongs to the kernel of $\widetilde{\phi}_k$.
\end{proof}

\begin{problem}
Is it true that  $\widetilde{\psi}_{k,k}$ is faithful for $k \not= 0$?
\end{problem}

At the end of this subsection we introduce a representation $$\Psi_{k,l} : UB_n \to {\rm Aut}(F_{n+1}),$$ where $F_{n+1} = \langle x_1, x_2, \dots , x_{n}, y \rangle$ which generalizes the representation $\widetilde{\psi}_{k,l}$, and which acts on the generators $\sigma_1,\sigma_2,\dots,\sigma_{n-1},\tau_1,\tau_2,\dots,\tau_{n-1}$ of $UB_n$ by the rules
\begin{align*}
\Psi_{k,l}(\sigma_{i}) &: \begin{cases}
x_{i} \mapsto x_{i} x_{i+1}x_i^{-1},&\\
x_{i+1} \mapsto x_{i},&\\
x_{j} \mapsto x_{j}, &  j \neq i,i+1,\\
y \mapsto y, &
\end{cases}\\
\Psi_{k,l} (\tau_i) &:  
\begin{cases}
x_i \to y^{-1} x_i^{-l}x_{i+1}^{-k} x_i x_{i+1}^k x_i^{l} y, & \\
x_{i+1} \to y^{-1} x_i^{-l}x_{i+1}x_i^{l} y,&  \\
x_j \to x_j & \mbox{if} \quad  j \neq i, i+1, \\ 
y \mapsto y, &
\end{cases}
\end{align*}
for $i=1, 2, \ldots, n-1$. It is easy to see that $\Psi_{k,l}$ generalizes the representation $\widetilde{\psi}_{k,l}$: if we put $y=1$, then $\Psi_{k,l}$ goes to $\widetilde{\psi}_{k,l}$.

\begin{problem}
Do there exist some integers $k,l$ such that the representation $\Psi_{k,l} : UB_n \to {\rm Aut}(F_{n-1})$ is faithful for all $n > 2$?
\end{problem}

\subsection{Linear representations}
As a corollary from the previous section, in the present section we construct a linear representations of the groups $T_n$. In order to do it we use the Magnus approach for construction of linear representations from representations by automorphisms of free groups. This approach is described, for example, in \cite[Section~4.1]{alotofauthors}. In the present section we will just introduce the final representation we got.

 Let $M$ be the free module with the basis $\{e_1, e_2, \ldots, e_{n-1}\}$ over the ring $R = \mathbb{Z}[t_1^{\pm 1}, t_2^{\pm 1}, \ldots, t_{n}^{\pm 1}]$ of Laurent polynomials with variables $t_1,t_2,\dots,t_n$. Define the map $\Theta : T_n \to \mathrm{Aut}(M)$ which acts on the generators $\tau_1,\tau_2,\dots,\tau_{n-1}$ by the following rule
$$
\Theta(\tau_i) : \begin{cases}
e_{i} \longmapsto t_{i+1}^{-1} e_i + t_{i+1}^{-1} (t_i-1) e_{i+1}, &  \\
e_{j} \longmapsto e_j, &j\not=i.
\end{cases}
$$
The following statement obviously follows from the fact that the matrices of the elements $\Theta(\tau_i)$ in the basis $e_1,e_2,\dots,e_n$ are triangular.
\begin{proposition}
The image $\operatorname{Im}  \Theta$ is a  solvable group. Hence, for $n>2$ the representation $\Theta$ has a non-trivial kernel.
\end{proposition}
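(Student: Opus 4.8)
The plan is to exploit the explicit triangular shape of the generating matrices and then play solvability against the internal structure of $T_n$. First I would read off directly from the defining formula that, with respect to the ordered basis $e_1, e_2, \ldots$, each $\Theta(\tau_i)$ is lower triangular: the image of $e_i$ is $t_{i+1}^{-1} e_i + t_{i+1}^{-1}(t_i-1) e_{i+1}$, so the only nonzero off-diagonal entry sits strictly below the diagonal (in the $(i+1,i)$ position), every other column is the standard basis vector it started from, and the diagonal entries are $1$ and the unit $t_{i+1}^{-1}$. Consequently $\operatorname{Im}\Theta$ is contained in the group $\mathrm{LT}$ of invertible lower triangular matrices over $R=\Z[t_1^{\pm1},t_2^{\pm1},\ldots,t_n^{\pm1}]$ whose diagonal entries are units.

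Second, I would invoke the classical fact that $\mathrm{LT}$ is solvable over any commutative ring. It fits into a split short exact sequence $1\to U\to \mathrm{LT}\to D\to 1$, where $U$ is the unipotent subgroup of lower triangular matrices with $1$'s on the diagonal and $D$ is the group of invertible diagonal matrices. The subgroup $U$ is nilpotent, since taking commutators pushes the nonzero off-diagonal entries strictly further below the diagonal, so the lower central series terminates; the quotient $D$ is abelian. An extension with nilpotent kernel and abelian quotient is solvable, hence $\mathrm{LT}$ is solvable, and therefore so is its subgroup $\operatorname{Im}\Theta$. This establishes the first assertion.

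Finally, for the kernel I would use that the only defining relations of $T_n$ are the commutations $\tau_i\tau_j=\tau_j\tau_i$ for $|i-j|>1$. For $n>2$ the generators $\tau_1,\tau_2$ satisfy no relation, since $|1-2|=1$, so the subgroup $\langle \tau_1,\tau_2\rangle$ is free of rank two; equivalently $T_3=F_2$ embeds into $T_n$ via $\tau_1,\tau_2$. A free group of rank two is not solvable, whereas every subgroup of a solvable group is solvable. If $\Theta$ were injective, then $T_n\cong \operatorname{Im}\Theta$ would be solvable, contradicting the presence of the free subgroup $\langle\tau_1,\tau_2\rangle$. Hence $\operatorname{Ker}\Theta\neq 1$ for $n>2$, which is the second assertion.

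There is essentially no hard step here, which is why the statement can reasonably be called obvious: the triangularity is immediate from the formula, and solvability of the triangular matrix group is standard. The only points requiring a line of care are checking that the diagonal entries remain in $R^\times$, so that $\operatorname{Im}\Theta$ genuinely lands in the invertible triangular group, and pinning down a concrete non-solvable subgroup of $T_n$, namely the free group on $\tau_1,\tau_2$, to force non-injectivity.
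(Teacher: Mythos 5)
Your proposal is correct and is essentially the paper's own argument: the paper simply remarks that the statement ``obviously follows'' from the triangularity of the matrices $\Theta(\tau_i)$, leaving implicit exactly the two points you spell out (solvability of the triangular group over $R$, and non-solvability of $T_n$ for $n>2$ via the free subgroup $\langle\tau_1,\tau_2\rangle$). Your write-up just supplies the details the paper omits, so there is nothing to correct.
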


\subsection{Universal pure braid group} In this section we study universal pure braid groups on small amount of strands. Consider the homomorphism $\pi : UB_n \to S_n$ given on generators of $UB_n$ by the rule $\pi(\sigma_i) = \pi(\tau_i) = (i, i+1)$ for $i = 1, 2, \ldots, n-1$. 
The kernel $\operatorname{Ker}(\pi)$ of this homomorphism is known as the \emph{universal pure braid group}, denoted by $U{P_n}$. The following short exact sequence holds: 
$$
1 \to UP_{n} \to UB_{n} \stackrel{\pi}{\to} S_{n} \to  1.
$$ 
It is obvious that $P_n$ is a subgroup of $U{P_n}$. Hence, the last group contains all generators and relations of $P_n$. 

\begin{theorem} \label{theorem5.1}
The following properties hold:  

1) $UB_2 = B_2 * T_2 \cong F_2$, ~~$UB_3 = B_3 * T_3 = B_3 * F_2$,  

2) $U{P_2} =  F_3$,~~$UP_3 = {P_3} * {F_{12}}$.
\end{theorem}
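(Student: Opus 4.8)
The plan is to treat the two parts separately, deriving the $UB_n$ statements by simplifying the presentation and the $UP_n$ statements by Nielsen--Schreier and Bass--Serre (Kurosh) theory applied to those decompositions. For part 1 I would just inspect the defining presentation of $UB_n$. For $n=2$ the only generators are $\sigma_1,\tau_1$, and every relation (braid relations, the commutativity $\tau_i\tau_j=\tau_j\tau_i$, and the mixed $\tau_i\sigma_j=\sigma_j\tau_i$) requires either $i\le n-2$ or $|i-j|\ge 2$, so all of them are vacuous; hence $UB_2=\langle\sigma_1,\tau_1\rangle\cong F_2$, and since $B_2=\langle\sigma_1\rangle\cong\mathbb Z$ and $T_2=\langle\tau_1\rangle\cong\mathbb Z$ this is exactly $B_2*T_2$. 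For $n=3$ the generators are $\sigma_1,\sigma_2,\tau_1,\tau_2$ and the only non-vacuous relation is the single braid relation $\sigma_1\sigma_2\sigma_1=\sigma_2\sigma_1\sigma_2$, which involves only the $\sigma$'s; thus $UB_3=\langle\sigma_1,\sigma_2\mid\sigma_1\sigma_2\sigma_1=\sigma_2\sigma_1\sigma_2\rangle*\langle\tau_1,\tau_2\rangle=B_3*T_3$, with $T_3=F_2$ by the same vacuity argument.

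Next, $UP_2=\operatorname{Ker}(\pi\colon UB_2\to S_2)$ with $\pi(\sigma_1)=\pi(\tau_1)=(1,2)$. Since $UB_2\cong F_2$ and $\pi$ surjects onto $S_2\cong\mathbb Z/2$, the subgroup $UP_2$ has index $2$ in a free group of rank $2$, so by the Nielsen--Schreier formula it is free of rank $1+2(2-1)=3$, i.e. $UP_2=F_3$. An explicit Schreier transversal $\{1,\sigma_1\}$ even produces the three free generators $\sigma_1^2$, $\sigma_1\tau_1$, $\tau_1\sigma_1^{-1}$.

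Finally, for $UP_3=\operatorname{Ker}(\pi\colon UB_3\to S_3)$ I would use the decomposition $UB_3=B_3*T_3$ from part 1 together with the Kurosh subgroup theorem, phrased as Bass--Serre theory for the action of $H=\operatorname{Ker}(\pi)$ on the tree of the free product. The key point is that $\pi$ is \emph{surjective on each free factor}: on $B_3$ it is the standard map with $\operatorname{Ker}(\pi|_{B_3})=P_3$, and on $T_3=\langle\tau_1,\tau_2\rangle$ it sends $\tau_1\mapsto(1,2)$, $\tau_2\mapsto(2,3)$, hence is onto $S_3$ with kernel of index $6$, which by Nielsen--Schreier is free of rank $1+6(2-1)=7$. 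For a surjection $\phi\colon A*B\to Q$ onto a finite group with both restrictions onto, the quotient $H\backslash T$ of the Bass--Serre tree is a graph with one $A$-vertex (stabiliser $A\cap H$), one $B$-vertex (stabiliser $B\cap H$), and $|Q|$ edges with trivial stabilisers, so its first Betti number is $|Q|-1$; the structure theorem then yields $H=(A\cap H)*(B\cap H)*F_{|Q|-1}$. With $Q=S_3$ this gives $UP_3=P_3*F_7*F_5=P_3*F_{12}$, using $F_7*F_5=F_{12}$.

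The main obstacle is this last step, namely counting the free part of the Kurosh decomposition exactly. One must check that each free factor meets $H$ in a single conjugacy class of subgroups --- this is precisely the surjectivity of $\pi$ on $B_3$ and on $T_3$, which forces single double cosets $H\backslash UB_3/B_3$ and $H\backslash UB_3/T_3$ --- and then pin down the rank of the free complement through the first Betti number of the quotient graph $H\backslash T$ rather than merely reading off the vertex groups. Getting this rank right (equal to $5$, not to $|Q|$ or $|Q|-2$) is where the care lies; the cleanest bookkeeping is the explicit graph-of-groups picture with $2$ vertices and $6$ edges described above.
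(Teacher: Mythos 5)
Your proposal is correct, and for the heart of the theorem --- the decomposition of $UP_3$ --- it takes a genuinely different route from the paper. Part 1 and the $UP_2$ statement are handled essentially as in the paper: both rest on inspecting the presentations (all relations of $UB_2$, and all relations of $UB_3$ except the braid relation $\sigma_1\sigma_2\sigma_1=\sigma_2\sigma_1\sigma_2$, are vacuous), and $UP_2$ is an index-$2$ subgroup of $F_2$, hence free of rank $3$; the paper exhibits exactly your three Schreier generators $\sigma_1^2$, $\tau_1\sigma_1^{-1}$, $\sigma_1\tau_1$. For $UP_3$, however, the paper runs the Reidemeister--Schreier rewriting process on $UB_3$ over a transversal of $S_3$ (its printed transversal lists seven words, evidently a typo, since $\sigma_1\sigma_2\sigma_1$ and $\sigma_2\sigma_1\sigma_2$ represent the same coset): the restriction of $\pi$ to $B_3$ contributes $P_3$, the generators $\tau_1,\tau_2$ contribute twelve Schreier generators $S_{g,\tau_i}$, and since every defining relation of $UB_3$ involves only $\sigma_1,\sigma_2$, these twelve generators satisfy no relations, giving $UP_3=P_3*F_{12}$ together with explicit free generators. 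You replace this bookkeeping with the Kurosh subgroup theorem in Bass--Serre form: surjectivity of $\pi$ on each free factor forces single double cosets $UP_3\backslash UB_3/B_3$ and $UP_3\backslash UB_3/T_3$, the factor intersections are $P_3$ and $T_3\cap UP_3\cong F_7$ (Nielsen--Schreier, index $6$ in $F_2$), and the free complement has rank equal to the first Betti number $6-2+1=5$ of the quotient graph, whence $UP_3=P_3*F_7*F_5=P_3*F_{12}$. Your route is more conceptual, makes the rank count transparent, and incidentally explains how the paper's $12$ splits as $7+5$; the paper's computation buys explicit free generators, which it then reuses to identify the generators of $UP_2$ sitting inside $UP_3$. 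Both arguments are sound, and their agreement ($F_7*F_5=F_{12}$) is a useful cross-check.
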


\begin{proof} 
1) Follows from the presentations of $UB_2$ and $UB_3$.

2)
We will find the presentation of $UP_{3} = \operatorname{Ker} (\pi)$ by the Reidemeister-Shreier method using the Shreier set $\{1, \sigma_{1}, \sigma_{2}, \sigma_{1} \sigma_{2}, \sigma_{2} \sigma_{1}, \sigma_{1} \sigma_{2} \sigma_{1}, \sigma_{2} \sigma_{1} \sigma_{2} \}$. Since the restriction of homomorphism $\pi$ on $B_{3} \subset UB_{3}$ has $P_{3}$ as a kernel, it is enough to consider only generators $\tau_{1}, \tau_{2}$ of $UB_{3}$. Hence we get  
\begin{align*}
& S_{1,\tau_1} = \tau_1 \cdot (\overline{\tau_1})^{-1} = \tau_1 \cdot \sigma_1^{-1},\\
& S_{1,\tau_2} = \tau_2 \cdot (\overline{\tau_2})^{-1} = \tau_2 \cdot \sigma_2^{-1},\\
&S_{\sigma_1,\tau_1} = \sigma_1 \tau_1 \cdot (\overline{\sigma_1 \tau_1})^{- 1} = \sigma_1 \tau_1,\\
& S_{\sigma_1,\tau_2} = \sigma_1 \tau_2 \cdot (\overline{\sigma_1 \tau_2})^{-1} = \sigma_1 \tau_2 \sigma_2^{-1} \sigma_1^{-1},\\
& S_{\sigma_2,\tau_1} = \sigma_2 \tau_1 \cdot (\overline{\sigma_2 \sigma_1})^{-1} = \sigma_2 \tau_1 \sigma_1^{-1} \sigma_2^{-1},\\
& S_{\sigma_2,\tau_2} = \sigma_2 \tau_2,\\
& S_{\sigma_1 \sigma_2,\tau_1} = \sigma_1 \sigma_2 \tau_1 \sigma_1^{-1} \sigma_2^{-1} \sigma_1^{-1},\\
&S_{\sigma_1 \sigma_2,\tau_2} = \sigma_1 \sigma_2 \tau_2 \sigma_1^{-1},\\
& S_{\sigma_2 \sigma_1,\tau_1} = \sigma_2 \sigma_1 \tau_1 \sigma_2^{-1},\\
& S_{\sigma_2 \sigma_1,\tau_2} = \sigma_2 \sigma_1 \tau_2 \sigma_2^{-1} \sigma_1^{-1} \sigma_2^{-1},\\
& S_{\sigma_1 \sigma_2 \sigma_1, \tau_1} = \sigma_1 \sigma_2 \sigma_1 \tau_1 \sigma_2^{-1} \sigma_1^{-1},\\
& S_{\sigma_1 \sigma_2 \sigma_1, \tau_2} = \sigma_1 \sigma_2 \sigma_1 \tau_2 \sigma_2^{-1}\sigma_1^{-1} \sigma_2^{-1} \sigma_1^{-1}.\\
\end{align*}
Since, $T_3$ is free, all of its subgroups are also free. Hence, we have the decomposition for $UP_3$. The group $UP_2$ is its subgroup and it is generated by 
$$
a_{12} = \sigma_1^2,~~S_{1,\tau_1} = \tau_1  \sigma_1^{-1},~~~S_{\sigma_1,\tau_1}  = \sigma_1 \tau_1.
$$
We do not have relations between them. Hence, $UP_2$ is free of rank $3$.\end{proof}

\begin{corollary} The following properties hold:
\begin{itemize}
\item[1)] $UB_3$ is linear; 
\item[2)] $UP_3$ is residually nilpotent.
\end{itemize}
\end{corollary}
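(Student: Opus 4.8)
The plan is to deduce both assertions directly from the free-product decompositions established in Theorem~\ref{theorem5.1}, combined with standard preservation properties of free products.

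For part~1), Theorem~\ref{theorem5.1} gives $UB_3 = B_3 * T_3 = B_3 * F_2$. The braid group $B_3$ is linear by the theorem of Bigelow and Krammer recalled in Section~\ref{sec1}, and the free group $F_2 \cong T_3$ is linear since it embeds into $\operatorname{GL}_2(\mathbb{Z})$. Because the free product of linear groups is again linear --- precisely the fact already invoked in the proof of Theorem~\ref{theorem4.1} --- it follows at once that $UB_3$ is linear. This half requires no ingredient beyond the decomposition and the known linearity of $B_3$.

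For part~2), Theorem~\ref{theorem5.1} gives $UP_3 = P_3 * F_{12}$, and I would conclude residual nilpotence from a suitable residual property of the two free factors. The delicate point is that residual nilpotence is \emph{not} preserved under free products in general: for instance $\mathbb{Z} * (\mathbb{Z}/6\mathbb{Z})$ fails to be residually nilpotent. One must therefore pass to the stronger, prime-refined property. First I would record that $P_3 \cong F_2 \times \mathbb{Z}$ (more generally, that $P_n$ is residually torsion-free nilpotent), so that $P_3$ is residually-$p$, i.e. residually a finite $p$-group, for every prime $p$; the factor $F_{12}$ is free and hence residually-$p$ for every $p$ by Magnus's theorem. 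Fixing a single prime, say $p=2$, I would then apply Gruenberg's theorem, which asserts that a free product of residually finite $p$-groups is again residually a finite $p$-group, and in particular residually nilpotent. Applied to $UP_3 = P_3 * F_{12}$, this yields the claim.

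The main obstacle is exactly this common-prime refinement in part~2): since residual nilpotence alone is not inherited by free products, the essential step is to verify that the two factors $P_3$ and $F_{12}$ are residually a finite $p$-group for one and the same prime $p$. Once that verification is in place, Gruenberg's theorem closes the argument, while part~1) is immediate.
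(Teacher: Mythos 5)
Your part 1) coincides with the paper's own argument: both use the decomposition $UB_3 = B_3 * T_3 = B_3 * F_2$ from Theorem~\ref{theorem5.1} together with the fact that a free product of linear groups is linear.

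Your part 2) is correct, but it goes through a different key lemma than the paper. The paper deduces residual nilpotence of $UP_3 = P_3 * F_{12}$ from the statement that a free product of two residually nilpotent groups \emph{without torsion} is residually nilpotent; the torsion-freeness hypothesis there plays exactly the role of your common-prime refinement, ruling out examples such as $\mathbb{Z} * (\mathbb{Z}/6\mathbb{Z})$ or $\mathbb{Z}/2 * \mathbb{Z}/3$. You instead verify that both free factors are residually a finite $p$-group for one and the same prime: $P_3 \cong F_2 \times \mathbb{Z}$ is residually-$p$ for every $p$ (Magnus for $F_2$, and direct products preserve the property), the free group $F_{12}$ likewise, and then you apply Gruenberg's theorem that the class of residually finite $p$-groups is closed under free products. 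Both implementations ultimately rest on Gruenberg-type free-product theorems, but they are genuinely different: the paper's citation is shorter and needs only that $P_3$ is torsion-free and residually nilpotent (which it does not even spell out), whereas your route requires the extra, easy, identification $P_3 \cong F_2 \times \mathbb{Z}$ but returns a strictly stronger conclusion --- $UP_3$ is residually a finite $2$-group, indeed residually-$p$ for every prime $p$, hence in particular residually finite, not merely residually nilpotent. Your cautionary observation that residual nilpotence alone is not inherited by free products is well taken; it is precisely the reason the paper's lemma carries its torsion-freeness hypothesis, and your proposal makes this point explicit where the paper leaves it implicit.
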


\begin{proof}
1) Follows from the fact that the free product of two linear groups is linear.

2) Follows from the fact that the free product of two residually nilpotent groups that do not have torsion is residually nilpotent.
\end{proof}

\begin{remark}
If $n>3$, then we can not say that $UB_n$ is the free product of $B_n$ and $T_n$. In this case we have mixed relations of the form
$$
[\sigma_{i}, \tau_j] = 1,~~|i-j|>1.
$$
Also, if $n>3$  we can not say that $UP_n$ is the free product of $P_n$ and a free group.
In this case we have relations of the form
$$
[\tau_{i}^2, \tau_j^2] = 1,~~|i-j|>1.
$$
\end{remark}

\begin{question}
For $n>3$ find a set of generators and defining relations of the group $UP_n$.
\end{question}

\section*{Acknowledgements} 
The results were obtained during the program on knots and braid groups advised by V.~Bardakov, T.~Nasybullov and A.~Vesnin in the frame of The First Workshop at the Mathematical Center in Akademgorodok supported by the Ministry of Science and Higher Education of the Russian Federation (agreement no. 075--2019--1675).

\bigskip

Valeriy Bardakov$^{*,\dag, \ddag}$ (bardakov@math.nsc.ru),   

Ivan  Emel'yanenkov$^*$ (i.emelianenkov@g.nsu.ru),  

Maxim Ivanov$^*$ (m.ivanov2@g.nsu.ru),  

Tatyana Kozlovskaya$^{\dag}$ (t.kozlovskaya@math.tsu.ru),  

Timur Nasybullov$^{*,\dag, \ddag}$ (timur.nasybullov@mail.ru),  

Andrei Vesnin$^{*,\dag, \ddag}$ (vesnin@math.nsc.ru)

~\\
$^*$ Novosibirsk State University, Pirogova 1, 630090 Novosibirsk, Russia,\\
$^{\dag}$ Tomsk State University, Lenin Ave. 36, 634050 Tomsk, Russia,\\
$^{\ddag}$ Sobolev Institute of Mathematics, Acad. Koptyug Ave. 4, 630090 Novosibirsk, Russia,\\

\end{document}